\documentclass[11 pt]{amsart}
\usepackage{amssymb}

\usepackage{bm}

\makeatletter
\DeclareRobustCommand*{\bfseries}{%
  \not@math@alphabet\bfseries\mathbf
  \fontseries\bfdefault\selectfont
  \boldmath
}
\makeatother

\textwidth=34cc
\calclayout

\def\ignore#1{\relax}

\newcommand*\refc{\eqref}    

\usepackage[svgnames]{xcolor}
\usepackage[plainpages=false, pdfpagelabels,  colorlinks=true, pdfstartview=FitV, linkcolor=DarkBlue, citecolor=DarkRed, urlcolor=blue]{hyperref}

\usepackage{graphicx}


\newcommand\inlinegraphic[2][{scale=1.0}]{\begin{array}{c} \includegraphics[#1]{./EPS/#2}\end{array}}

\numberwithin{equation}{section}
\numberwithin{figure}{section}



\def\Z{{\mathbb Z}}

\def\C{{\mathbb C}}

\def\la{\lambda}

\def\inv{^{-1}}

\newcommand{\m}[5]{m^{#1}_{({#2}, {#3}), \, ({#4}, {#5})}}

\newcommand\mm[3]{m^{#1}_{#2, \, #3}}

\newcommand{\leftexp}[2]{{\vphantom{#2}}^{#1}{#2}}

\newcommand\paren[1]{(#1)}

\def\blam{{\bm\lambda}}
\def\bmu{{\bm\mu}}
\def\bnu{{\bm \nu}}
\def\bgamma{{\bm \gamma}}

\def\symn{\mathfrak S_n}
\def\sym{\mathfrak S}

 \def\GammaDominates{\unrhd_{_\Gamma}}
 \def\StrictlyGammaDominates{\rhd_{_\Gamma}}
\def\multicomp{C_n^\Gamma}
 \def\multipart{\Lambda_n^\Gamma}
\def\power #1{^{(#1)}}
\def\dominates{\unrhd}
\def\StrictlyDominates{\rhd}

\def\p #1{ \bm {#1}}
\def\pbar #1{\overline{\p {#1}}}
\newcommand\generator[2]{\delta_{#1}^{#2}}


\def\mft{\mathfrak t}
\def\mfs{\mathfrak s}
\def\mfu{\mathfrak u}
\def\mfv{\mathfrak v}


\def\Hom{{\rm Hom}}
\def\End{{\rm End}}
 \def\rank{{\rm rank}}
 \def\Ind{{\rm Ind}}
 \def\tr{{\rm tr}}
 \def\Tr{{\rm Tr}}
 \def\spn{{\rm span}}
\def\cl{{\rm cl}}
\def\comp{{\rm comp}}
\def\char{{\rm char}}



\theoremstyle{plain}
\newtheorem{theorem}{Theorem}[section]

\theoremstyle{plain}
\newtheorem{proposition}[theorem]{Proposition}

\theoremstyle{plain}
\newtheorem{corollary}[theorem]{Corollary}

\theoremstyle{plain}
\newtheorem{lemma}[theorem]{Lemma}

\theoremstyle{plain}
\newtheorem{claim}[theorem]{Claim}

\theoremstyle{definition}
\newtheorem{definition}[theorem]{Definition}

\theoremstyle{definition}
\newtheorem{example}[theorem]{Example}

\theoremstyle{definition}
\newtheorem{remark}[theorem]{Remark}

\theoremstyle{definition}
\newtheorem{notation}[theorem]{Notation}

\theoremstyle{definition}

\theoremstyle{definition}

\theoremstyle{plain}

\title{Cellularity of Wreath Product Algebras and $A$--Brauer algebras}

\author{T. Geetha}
\address{The Institute of Mathematical Sciences\\Chennai, India}
\email{ geetha\_curie@yahoo.co.in}

\author{Frederick M. Goodman}
\address{Department of Mathematics\\ University of Iowa\\ Iowa
City, Iowa, USA}
\email{ frederick-goodman@uiowa.edu} 

\dedicatory{Dedicated to V.S. Sunder on the occasion of his 60th birthday.}
\thanks{We thank Andrew Mathas for suggesting that the method of ~\cite{DJM} could be adapted
to prove the cellularity of wreath products.  T. Geetha was supported by a postdoctoral fellowship at the Institute of Mathematical Sciences, Chennai, India. }
\subjclass[2010]{16G30}
\keywords{Cellular algebras, wreath products, Brauer algebras}

\begin{document}
 
  \begin{abstract}  A cellular algebra is called {\em cyclic cellular} if all cell modules are cyclic.  Most important examples of cellular algebras appearing in representation theory are in fact cyclic cellular.
 We prove that if $A$ is a cyclic cellular algebra, then the wreath product algebras $A\wr \symn$ are also cyclic cellular.  We also introduce $A$--Brauer algebras, for algebras $A$ with an involution and trace. This class of algebras includes, in particular, $G$--Brauer algebras for non-abelian groups $G$.  
We  prove that if $A$ is cyclic cellular then the $A$--Brauer algebras $D_n(A)$ are also cyclic cellular.
 \end{abstract}
 \maketitle

 \section{Introduction}\label{introduction}
 The concept of cellularity of algebras was introduced by Graham and Lehrer ~\cite{Graham-Lehrer-cellular}.  Cellularity is useful for analyzing the representation theory of important classes of algebras such as Hecke algebras, $q$--Schur algebras, Brauer and BMW algebras, etc.
 
 In this paper, we introduce a variant of the notion of cellularity:  A cellular algebra $A$  is called {\em cyclic cellular} if all of its cell modules are cyclic $A$--modules.  Although cyclic cellularity is nominally stronger than cellularity,  most important classes of cellular algebras appearing in representation theory are in fact cyclic cellular. For example, the algebras mentioned above -- Hecke algebras of type $A$, $q$--Schur algebras, Brauer algebras and BMW algebras -- are cyclic cellular.
 
 It appears that the requirement of cyclic cell modules eliminates some potential pathology allowed by the general notion of cellularity; for example, an abelian cellular algebra is cyclic cellular if and only if all the cell modules have rank 1.
 
 Our main theorem regarding cyclic cellular algebras is the following:  if $A$ is a cyclic cellular algebra, then the wreath product of $A$ with the symmetric group $\symn$ is also cyclic cellular.
 
 In the second part of the paper, we introduce a new class of algebras, the $A$--Brauer algebras, which are a sort of wreath product of an algebra $A$ with the Brauer algebra.  The definition of the $A$--Brauer algebras requires that $A$ be an algebra with involution $*$ and with a $*$--invariant trace.  The class of $A$--Brauer algebras includes in particular $G$--Brauer algebras for non-abelian groups $G$, generalizing the construction for abelian groups $G$ by Parvathi and Savithri  ~\cite{G-Brauer}.
 
 Our second main theorem is that if $A$ is a cyclic cellular algebra with an involution--invariant trace, then the $A$--Brauer algebras $D_n(A)$ for $n \ge 1$ are also cyclic cellular.

\section{Cellular algebras}
\subsection{Definition of cellularity}
We recall the definition of {\em cellularity}  from  Graham and Lehrer ~\cite{Graham-Lehrer-cellular}.   
The version given here is slightly weaker than the original definition in 
~\cite{Graham-Lehrer-cellular}.  The advantages of the weaker definition are discussed in 
~\cite{Goodman-Graber1, Goodman-Graber2}.

\begin{definition}[\cite{Graham-Lehrer-cellular}]  \label{weak cellularity}  Let $R$ be 
an integral domain and $A$ a unital $R$--algebra.  A {\em cell datum} for $A$ 
consists of  an $R$--linear algebra involution  $a \mapsto a^*$;  a finite partially 
ordered set $(\Gamma, \ge)$;  for each $\gamma \in \Gamma$  a finite index set 
$\mathcal T(\gamma)$;   and  a subset 
$$
\mathcal C = \{ c_{s, t}^\gamma :  \gamma \in \Gamma \text{ and }  
s, t \in \mathcal T(\gamma)\} \subseteq A
$$ 
with the following properties:
\begin{enumerate}
\item  $\mathcal C$ is an $R$--basis of $A$.
\item   \label{mult rule} For each $\gamma \in \Gamma$,  let $\bar A^\gamma$  
be the span of the  $c_{s, t}^\mu$  with $\mu > \gamma$.   Given $\gamma \in 
\Gamma$,  $s \in \mathcal T(\gamma)$, and $a \in A$,   there  exist coefficients  
$r_v^s( a) \in R$ such that for all $t \in \mathcal T(\gamma)$:
$$
a c_{s, t}^\gamma  \equiv \sum_v r_v^s(a)  c_{v, t}^\gamma  \mod  \bar A^\gamma.
$$
\item  $(c_{s, t}^\gamma)^* \equiv c_{t, s}^\gamma \mod \bar A^\gamma$ for all 
$\gamma\in \Gamma$ and, $s, t \in \mathcal T(\gamma)$.
\end{enumerate}
For brevity,  we will write that  $(\mathcal C, \Gamma)$ is a  cellular basis of $A$.   When we need to  be more explicit, we write that $(A, *, \Gamma, \ge, \mathcal T, \mathcal C)$ is a cell datum.
\end{definition}

Note that it follows from Definition \ref{weak cellularity} (2) and (3) that 
\begin{equation} \label{equation: right multiplication rule}
c_{s, t}^\gamma a  \equiv \sum_w r_w^t(a^*)  c_{s, w}^\gamma  \mod  \bar A^\gamma.
\end{equation}

The original definition of Graham and Lehrer  includes a stronger version of  condition (3), as follows:

\begin{definition} \label{strict cellularity}
An algebra $A$ with algebra involution $*$ over an integral domain $R$ is said to 
be {\em strictly cellular} if it satisfies the conditions of Definition \ref{weak 
cellularity}, except that condition (3) is replaced by the stronger requirement:

\smallskip
\noindent
\quad  \ \ (3$'$) \   $(c_{s, t}^\gamma)^* = c_{t, s}^\gamma$ for all $\gamma\in \Gamma$ and 
$s, t \in \mathcal T(\gamma)$.
\end{definition}

  All the statements and arguments of ~\cite{Graham-Lehrer-cellular} remain valid 
with the weaker Definition \ref{weak cellularity}.  The two definitions are equivalent 
in case $2$ is invertible in the ground ring $R$.   
Basis free characterizations of cellularity have been given in ~\cite{KX-Morita, KX-structure, Goodman-Graber1}.
 
\subsection{Structures related to cellularity}
We recall some basic structures related to cellularity, see ~\cite{Graham-Lehrer-cellular}.
Let $A$ be  a cellular algebra with cell datum $(A, *, \Gamma, \ge, \mathcal T, \mathcal C)$.
Given $\gamma\in\Gamma$,   let $A^\gamma$ denote the span of the $c_{ s, t}^{\mu}$ with 
$\mu  \geq \gamma$.  It follows that both $A^\gamma$ and $\bar A^\gamma$ 
(defined above) are $*$--invariant two sided ideals of $A$.

For $\gamma \in \Gamma$,  the {\em left  cell module} $\Delta^\gamma$ 
is defined as follows: as   an $R$--module, $\Delta^\gamma$  is free with basis indexed by $
\mathcal T(\gamma)$, say  $\{c_{ s}^\gamma$ : $ s \in \mathcal T(\gamma)\}$;   for each $a \in A$, 
the action of $a$ on $\Delta^\gamma$ is defined by  
$ ac_{ s}^\gamma=\sum_{ v} r_{ v}^{ s}(a)   c_{ v}^\gamma$ 
where the elements $r_{ v}^{ s}(a) \in R$ are the coefficients  in Definition \ref{weak cellularity} (\ref{mult rule}).   Note that for any $t \in \mathcal T(\gamma)$, the assignment $c_s^\gamma \mapsto c^\gamma_{s, t} + \bar A^\gamma$
defines an injective $A$--module homomorphism from $\Delta^\gamma$ to $A^\gamma/\bar A^\gamma$.  

If $A$ and $B$ are $R$--algebras with involutions denoted by $*$,  then we have a functor
$M \mapsto M^*$   from
$A$--$B$ bimodules to $B$--$A$ bimodules, as follows.  As an $R$--module, $M^*$ is just a copy of $M$ with elements marked by $*$.  The $B$--$A$ bimodule structure of $M^*$ is determined by 
$b x^* a = (a^* x b^*)^*$.    We have a natural isomorphism $M^{**}  \cong M$, via $x^{**} \mapsto x$.   In particular, taking $B$ to be $R$ with the trivial involution,  we get a functor from left $A$--modules to right $A$ modules.  Similarly, we get  a functor from right $A$--modules to left $A$--modules.   (If $\Delta \subset A$ is a left or right ideal, we have two meanings for $\Delta^*$, namely application of the functor $*$, or application of the involution in $A$,  but these agree.)
If ${}_A M$ is a left $A$--module and  $N_A$ is a right  $A$--module,  then
$$
(M \otimes_R N)^* \cong  N^* \otimes_R M^*,  
$$
as $A$--$A$ bimodules, with the isomorphism determined by $(m \otimes n)^* \mapsto n^* \otimes m^*$. 
In particular if ${}_A M$ is a left $A$--module and we identify $M^{**}$ with $M$  and
$(M \otimes M^*)^*$ with $M^{**} \otimes M^*  = M \otimes M^*$,   then we have $(x \otimes y^*)^* = y \otimes x^*$.    

Now we apply these observations with $A$ a cellular algebra and $\Delta^\gamma$ a cell module.  The assignment $$\alpha_\gamma : c^\gamma_{s, t}  +  \bar A^\gamma  \mapsto  c^\gamma_s \otimes (c^\gamma_t)^*$$ determines an
$A$--$A$ bimodule isomorphism from $A^\gamma/\bar A^\gamma$ to $\Delta^\gamma \otimes_R (\Delta^\gamma)^*$. Moreover,
we have $*\circ \alpha_\gamma =  \alpha_\gamma \circ *$, which reflects the cellular algebra axiom
$(c^\gamma_{s, t})^* \equiv c^\gamma_{t, s} \mod \bar A^\gamma$.   The importance of the maps $\alpha_\gamma$ for the structure of cellular algebras was stressed by K\"onig and Xi 
~\cite{KX-Morita, KX-structure}.

There exists a symmetric $R$--valued bilinear form 
$\langle\, , \rangle$ on $\Delta^\gamma$ such that for all $s, t, u, v \in \mathcal T(\gamma)$,
$$
c_{s, t}^\gamma c_{u, v}^\gamma \equiv \langle c_t^\gamma, c_u^\gamma \rangle
c_{s, v}^\gamma \mod \bar A^\gamma.
$$
This bilinear form plays an essential role in the theory of cellular algebras, see ~\cite{Graham-Lehrer-cellular}.

\subsection{Equivalent cellular bases}
A cellular algebra $A$ always admits many different cellular basis.  In fact, any choice of an $R$--basis in each cell module can be globalized to a cellular basis of $A$, see Lemma \ref{lemma: globalizing bases of cell modules} below.  

Let $A$ be a cellular algebra with cell datum $(A, *, \Gamma, \ge, \mathcal T, \mathcal C)$.
We say that a cellular basis
$$
\mathcal B = \{b^\gamma_{s, t} : \gamma \in \Gamma \text{ and }  s, t \in \mathcal T(\gamma)\}
$$
is {\em equivalent} to the original cellular basis $\mathcal C$ if it determines the same ideals
$A^\gamma$ and the same cell modules as does $\mathcal C$.   More precisely, the requirement is that
\begin{enumerate}
\item for all $\gamma \in \Gamma$,
$$
A^\gamma = \spn\{b^{\gamma'}_{s, t} : \gamma' \ge \gamma \text{ and } s, t \in \mathcal T(\gamma')\} \text{, and}
$$
\item for all $\gamma \in \Gamma$ and $t \in \mathcal T(\gamma)$, 
$$
\spn\{b^\gamma_{s, t} + \bar A^\gamma : s \in   \mathcal T(\gamma) \} \cong \Delta^\gamma,
$$
as $A$--modules.
\end{enumerate}

\begin{lemma} \label{delta tensor b isomorphic to delta}
Let $A$ be a cellular algebra with cell datum $(A, *, \Gamma, \ge, \mathcal T, \mathcal C)$.
Let $\gamma \in \Gamma$ and let $b \in \Delta^\gamma$ be non--zero.  Then $x \mapsto x \otimes b^*$ is an $A$--module isomorphism of $\Delta^\gamma$ onto $\Delta^\gamma \otimes b^* \subseteq \Delta^\gamma \otimes_R (\Delta^\gamma)^*$. 
\end{lemma}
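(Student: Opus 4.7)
The map $\varphi_b : x \mapsto x \otimes b^*$ is $R$--linear by construction, so there are really only two things to check: that $\varphi_b$ intertwines the left $A$--action, and that it is injective. The $A$--module structure on $\Delta^\gamma \otimes_R (\Delta^\gamma)^*$ comes from the left factor alone, i.e.\ $a \cdot (x \otimes y^*) = (ax) \otimes y^*$, so $\varphi_b(ax) = (ax) \otimes b^* = a \cdot \varphi_b(x)$ is immediate; in particular the image $\Delta^\gamma \otimes b^*$ is indeed an $A$--submodule.

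For injectivity, I would work in coordinates. Recall that $\Delta^\gamma$ is by definition a \emph{free} $R$--module with basis $\{c_s^\gamma : s \in \mathcal T(\gamma)\}$, so $(\Delta^\gamma)^*$ is free on $\{(c_t^\gamma)^* : t \in \mathcal T(\gamma)\}$, and therefore $\Delta^\gamma \otimes_R (\Delta^\gamma)^*$ is free on the pure tensors $\{c_s^\gamma \otimes (c_t^\gamma)^*\}$. Expand
\[
b = \sum_t \lambda_t c_t^\gamma, \qquad x = \sum_s \mu_s c_s^\gamma,
\]
so that $x \otimes b^* = \sum_{s,t} \mu_s \lambda_t \, c_s^\gamma \otimes (c_t^\gamma)^*$. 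If this vanishes, then $\mu_s \lambda_t = 0$ for every pair $(s,t)$. Since $b \ne 0$ there is some $t_0$ with $\lambda_{t_0} \ne 0$, and now the assumption that the ground ring $R$ is an integral domain forces $\mu_s = 0$ for every $s$, i.e.\ $x = 0$.

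There is no real obstacle here; the only subtle point worth flagging is the use of the integral domain hypothesis on $R$, which is needed precisely to cancel the nonzero coefficient $\lambda_{t_0}$. Without it the lemma would fail, as one sees already by tensoring $\Z/4\Z$ with itself over $\Z/4\Z$ and noting that $2 \otimes 2 = 0$ even though neither factor is zero.
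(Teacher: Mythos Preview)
Your proof is correct and essentially the same as the paper's: both use that $\Delta^\gamma$ and $(\Delta^\gamma)^*$ are free $R$--modules together with the integral domain hypothesis to obtain injectivity. The paper phrases this more conceptually---observing that $(\Delta^\gamma)^*$ is torsion free, so $R b^* \cong R$, whence $\Delta^\gamma \otimes_R R b^* \cong \Delta^\gamma \otimes_R R \cong \Delta^\gamma$---while you carry out the same cancellation explicitly in coordinates; the content is identical.
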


\begin{proof} Since $(\Delta^\gamma)^*$ is a free $R$--module, it is torsion free; hence
$$
\Delta^\gamma \cong \Delta^\gamma \otimes_R R \cong \Delta^\gamma \otimes_R R b^* = 
 \Delta^\gamma \otimes b^*
$$
as $A$--modules. 
Explicitly, the isomorphism is $x \mapsto x \otimes b^*$. 
\end{proof}

\begin{lemma}  \label{lemma: globalizing bases of cell modules}
Let $A$ be a cellular algebra with cell datum $(A, *, \Gamma, \ge, \mathcal T, \mathcal C)$.  For each $\gamma \in \Gamma$,  fix an $A$--$A$ bimodule isomorphism  $\beta_\gamma : A^\gamma/\bar A^\gamma \to
\Delta^\gamma \otimes_R (\Delta^\gamma)^*$ satisfying $*\circ \beta_\gamma = \beta_\gamma \circ *$, 
and let $\{b_t : t \in \mathcal T(\gamma)\}$ be an $R$--basis of $\Delta^\gamma$.   Finally, for each $\gamma \in \Gamma$ and each pair $s, t \in \mathcal T(\gamma)$,  let $b^\gamma_{s, t}$ be an arbitrary lifting of $\beta_\gamma\inv(b_s \otimes b_t^*)$ in $A^\gamma$. 
Then
$$
\mathcal B = \{b^\gamma_{s, t} : \gamma \in \Gamma \text{ and }  s, t \in \mathcal T(\gamma)\}
$$
is a cellular basis of $A$ equivalent to the original cellular basis $\mathcal C$. 
\end{lemma}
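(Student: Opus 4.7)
The plan is to verify the three cell datum axioms for $\mathcal B$, then the two equivalence conditions, by transporting structure through the bimodule isomorphisms $\beta_\gamma$.  The compatibility $\ast \circ \beta_\gamma = \beta_\gamma \circ \ast$ is exactly what makes the involution axiom fall out of the corresponding symmetry on $\Delta^\gamma \otimes_R (\Delta^\gamma)^*$.

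First I show that $\mathcal B$ is an $R$--basis of $A$.  Since $\{b_s \otimes b_t^*\}_{s,t}$ is an $R$--basis of $\Delta^\gamma \otimes_R (\Delta^\gamma)^*$ and $\beta_\gamma$ is an isomorphism, $\{b^\gamma_{s,t}\}_{s,t}$ reduces to an $R$--basis of $A^\gamma/\bar A^\gamma$.  Linearising the partial order on $\Gamma$ as $\gamma_1, \ldots, \gamma_N$ with $\gamma_i > \gamma_j \Rightarrow i < j$, I induct on $k$ to show that each $c^{\gamma_k}_{s,t}$ lies in the $R$--span of $\mathcal B$: modulo $\bar A^{\gamma_k}$ one can write $c^{\gamma_k}_{s,t}$ as a combination of $\{b^{\gamma_k}_{u,v}\}$, while $\bar A^{\gamma_k}$ is handled by the inductive hypothesis.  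Since $|\mathcal B| = |\mathcal C|$ and $A$ is $R$--free of that rank, extending scalars to the fraction field of $R$ promotes spanning to linear independence.

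Next I verify axiom~(2).  For $a \in A$ and $s \in \mathcal T(\gamma)$, expand $ab_s = \sum_v r^s_v(a)\, b_v$ in $\Delta^\gamma$.  The left $A$--action on $\Delta^\gamma \otimes_R (\Delta^\gamma)^*$ acts on the first tensor factor, so $a(b_s \otimes b_t^*) = \sum_v r^s_v(a)(b_v \otimes b_t^*)$; applying $\beta_\gamma\inv$ and lifting yields $a\, b^\gamma_{s,t} \equiv \sum_v r^s_v(a)\, b^\gamma_{v,t} \mod \bar A^\gamma$.  Axiom~(3) follows from the identity $(b_s \otimes b_t^*)^* = b_t \otimes b_s^*$ in $\Delta^\gamma \otimes_R (\Delta^\gamma)^*$, the compatibility of $\beta_\gamma$ with $\ast$, and the $\ast$--invariance of $\bar A^\gamma$.

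Finally, equivalence condition~(1) is a direct consequence of the spanning argument in step one, since the same induction shows that $\{b^\mu_{s,t} : \mu \ge \gamma\}$ spans $A^\gamma$.  For condition~(2), fix $t \in \mathcal T(\gamma)$: Lemma~\ref{delta tensor b isomorphic to delta} gives $\Delta^\gamma \cong \Delta^\gamma \otimes b_t^*$ as left $A$--modules, and postcomposing with $\beta_\gamma\inv$ identifies this with the submodule of $A^\gamma/\bar A^\gamma$ spanned by $\{b^\gamma_{s,t} + \bar A^\gamma : s \in \mathcal T(\gamma)\}$.  The only step requiring real care is the basis verification: one must arrange the induction along a linear extension of $\Gamma$ so that $\bar A^{\gamma_k}$ is always contained in the span of basis elements already processed.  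Everything else is a straightforward diagram chase through the $\beta_\gamma$.
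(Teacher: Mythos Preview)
Your proof is correct and, for the equivalence conditions, essentially identical to the paper's (same induction on a linearisation of $\Gamma$ for the ideal filtration, same appeal to Lemma~\ref{delta tensor b isomorphic to delta} for the cell module identification).  The one difference is that the paper does not verify the cellular axioms for $\mathcal B$ directly; it instead cites \cite{Goodman-Graber2}, Lemma~2.3, for the statement that $\mathcal B$ is a cellular basis, and then proceeds to the equivalence check.  Your self-contained verification of axioms (1)--(3) via transport through $\beta_\gamma$ is the natural argument and is presumably what is in that reference; the fraction-field trick for linear independence is fine since $R$ is an integral domain and $A$ is free of the right rank.
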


\begin{proof} It follows from ~\cite{Goodman-Graber2}, Lemma 2.3, that $\mathcal B$ is a cellular basis.
It remains to show that $\mathcal B$ is equivalent to the original cellular basis $\mathcal C$. For each fixed $\gamma \in \Gamma$,  $\{ b_s \otimes b_t^*:  s, t \in \mathcal T(\gamma)\}$ is an $R$--basis of $\Delta^\gamma \otimes_R (\Delta^\gamma)^*$.  It follows that
$$
\spn\{b^\gamma_{s, t} : s, t \in \mathcal T(\gamma)\} + \bar A^\gamma= A^\gamma.
$$
By induction on the partial order of $\Gamma$, we obtain that
$$
A^\gamma = \spn\{b^{\gamma'}_{s, t} :  \gamma' \ge \gamma \text{ and } s, t \in \mathcal T(\gamma')\}.
$$
By definition of $b^\gamma_{s, t}$, we have that $\beta_\gamma(b^\gamma_{s, t} + \bar A^\gamma) = b_s \otimes b_t^*$.  Thus for fixed $t \in \mathcal T(\gamma)$, the restriction of $\beta_\gamma$ is an isomorphism from $\spn\{b^\gamma_{s, t} + \bar A^\gamma : s \in \mathcal T(\gamma) \}$ onto $\Delta^\gamma \otimes b_t^*$.  By Lemma \ref{delta tensor b isomorphic to delta}, 
$\Delta^\gamma \otimes b_t^* \cong \Delta^\gamma$ as $A$--modules.  Thus we have
$$
\spn\{b^\gamma_{s, t} + \bar A^\gamma : s \in \mathcal T(\gamma) \} \cong \Delta^\gamma.
$$

\end{proof}

\subsection{Cyclic cellular algebras}

\begin{definition}   A cellular algebra is said to be {\em cyclic cellular} if every cell module of $A$ is cyclic.
\end{definition}

We will also say that a cellular basis is cyclic cellular if the cell modules defined via this basis are cyclic.

\begin{lemma} \label{lemma: equivalent conditions for cyclic cellular algebra}
 Let $A$ be a   cellular   algebra over an integral domain $R$ with cell datum \break $(A, *, \Gamma, \ge, \mathcal T, \mathcal C)$.  The following are equivalent:
\begin{enumerate}
\item  $A$ is cyclic cellular.
\item  For each $\gamma \in \Gamma$,  there exists an element $y_\gamma \in A^{\gamma}$ with the properties:
\begin{enumerate}
\item $y_\gamma \equiv y_\gamma^* \mod{\bar A^{\gamma}}$.
\item $A^{\gamma} =  A y_\gamma A +  \bar A^{\gamma}$.
\item    $(A y_\gamma + \bar A^{\gamma})/\bar A^{\gamma} \cong  \Delta^\gamma$, as $A$--modules. 
\end{enumerate}
\end{enumerate}
\end{lemma}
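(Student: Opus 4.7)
The plan is to prove the two implications separately, using the canonical $A$-$A$ bimodule isomorphism $\alpha_\gamma : A^\gamma/\bar A^\gamma \to \Delta^\gamma \otimes_R (\Delta^\gamma)^*$ recalled in the preceding subsection as the main bookkeeping device. The direction (2)$\Rightarrow$(1) is essentially tautological, since condition (c) exhibits $\Delta^\gamma$ as isomorphic to $(A y_\gamma + \bar A^\gamma)/\bar A^\gamma$, which is cyclic, generated by the coset of $y_\gamma$. So the substance of the argument lies in (1)$\Rightarrow$(2).

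For (1)$\Rightarrow$(2) my approach is to manufacture $y_\gamma$ by lifting the symmetric element $b \otimes b^* \in \Delta^\gamma \otimes_R (\Delta^\gamma)^*$ through $\alpha_\gamma$, where $b \in \Delta^\gamma$ is a chosen cyclic generator. Since $\Delta^\gamma$ is free of positive rank, any such $b$ is automatically nonzero. I pick any $y_\gamma \in A^\gamma$ whose image in $A^\gamma/\bar A^\gamma$ equals $\alpha_\gamma^{-1}(b \otimes b^*)$, and then verify (a), (b), (c) in sequence. Property (a) reduces, via the intertwining $\alpha_\gamma \circ * = * \circ \alpha_\gamma$, to the identity $(b \otimes b^*)^* = b \otimes b^*$, which follows from the general rule $(x \otimes y^*)^* = y \otimes x^*$. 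For (b) and (c) I transport the relevant submodules across $\alpha_\gamma$ and compute directly in $\Delta^\gamma \otimes_R (\Delta^\gamma)^*$. Cyclicity of $\Delta^\gamma$ via $b$ yields $A(b \otimes b^*) = (Ab) \otimes b^* = \Delta^\gamma \otimes b^*$, which combined with Lemma \ref{delta tensor b isomorphic to delta} supplies (c). Sweeping further on the right and using $b^* A = (Ab)^* = (\Delta^\gamma)^*$ fills out the rest of the tensor product and yields (b).

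The argument is essentially bookkeeping once the bimodule dictionary $\alpha_\gamma$ is in place, so there is no deep obstacle. The one point I expect to require a little care is the right-action computation $b^* A = (\Delta^\gamma)^*$ used for (b), which hinges on correctly interpreting the induced right $A$-action on the dualised module $(\Delta^\gamma)^*$ via the formula $b^* a = (a^* b)^*$.
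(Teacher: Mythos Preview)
Your proposal is correct and follows essentially the same approach as the paper's proof: both construct $y_\gamma$ by lifting $\alpha_\gamma^{-1}(b \otimes b^*)$ for a chosen cyclic generator $b$ of $\Delta^\gamma$, deduce (a) from the $*$-compatibility of $\alpha_\gamma$ and the identity $(b\otimes b^*)^* = b\otimes b^*$, deduce (c) via Lemma~\ref{delta tensor b isomorphic to delta}, and deduce (b) from $A(b\otimes b^*)A = \Delta^\gamma \otimes_R (\Delta^\gamma)^*$. Your explicit unpacking of the right action via $b^* a = (a^* b)^*$ is just a spelled-out version of the paper's one-line assertion of (2b).
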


\begin{proof}  Suppose that $A$ is cyclic cellular.  For each $\gamma \in \Gamma$,  chose a generator
$\generator {} \gamma$ of the cell module  $\Delta^\gamma$.  Let $y_\gamma$ be any lifting in $A^{\gamma}$ of
$\alpha_\gamma\inv(\generator {} \gamma \otimes (\generator {} \gamma)^*)$.  
 Then property (2a) holds because
$(\generator {} \gamma \otimes (\generator {} \gamma)^*)^* = \generator {} \gamma \otimes (\generator {} \gamma)^*$.
 Property (2b) holds because
$A(\generator {} \gamma \otimes (\generator {} \gamma)^*)A = \Delta^\gamma \otimes_R  (\Delta^\gamma)^*$.  

The restriction of $\alpha_\gamma$ yields an $A$--module isomorphism $a y_\gamma + \bar A^\gamma
\mapsto a \generator {} \gamma \otimes (\generator {} \gamma)^*$.  By Lemma \ref{delta tensor b isomorphic to delta}, 
$x \otimes (\generator {} \gamma)^* \mapsto x$  is an $A$--module isomorphism from $\Delta^\gamma \otimes  (\generator {} \gamma)^*$ onto $\Delta^\gamma$.  By composing these isomorphisms, we obtain the isomorphism $a y_\gamma + \bar A^\gamma \mapsto a \generator {} \gamma$ from
$(A y_\gamma + \bar A^{\gamma})/\bar A^{\gamma} $ to $\Delta^\gamma$.  Thus property (2c) holds.

Conversely, if (2) holds, then in particular (2c) implies that each cell module is cyclic.
\end{proof}

For the remainder of this section, let $A$ be a cyclic cellular algebra with cell datum  $(A, *, \Gamma, \ge, \mathcal T, \mathcal C)$, , and write 
 $\mathcal C = \{ c_{s, t}^\gamma :  \gamma \in \Gamma \text{ and }  s, t \in \mathcal T(\gamma)\}$.

\begin{notation}\label{notation cyclic cellular}
 For each $\gamma \in \Gamma$,  let $\generator {} \gamma$ be a generator of the cell module $\Delta^\gamma$, and let $y_\gamma$ be  a lifting in $A^\gamma$ of $\alpha_\gamma\inv(\generator {} \gamma \otimes (\generator {} \gamma)^*)$.   Let $\{c^\gamma_\mft : \mft \in \mathcal T(\gamma)\}$ be the standard basis of the cell module $\Delta^\gamma$ derived from the cellular basis $\mathcal C$ of $A$.   Since $\Delta^\gamma$ is cyclic, there exist elements $v_\mft \in A$ such that
$c^\gamma_\mft = v_\mft \generator {} \gamma$.    We denote
$$
V^\gamma = \{v_\mft : \mft \in \mathcal T(\gamma)\}.
$$
\end{notation}

\begin{lemma}  \label{lemma span of v y gamma} \mbox{}
\begin{enumerate}
\item  $Ay_\gamma + \bar A^\gamma = \spn\{ v y_\gamma: v \in V^\gamma\} + \bar A^\gamma$.
\item  $y_\gamma A + \bar A^\gamma = \spn\{  y_\gamma v^*: v \in V^\gamma\} + \bar A^\gamma$.
\end{enumerate}
\end{lemma}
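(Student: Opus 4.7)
The plan is to deduce part (1) directly from the isomorphism $(Ay_\gamma + \bar A^\gamma)/\bar A^\gamma \cong \Delta^\gamma$ furnished by Lemma \ref{lemma: equivalent conditions for cyclic cellular algebra}(2c), and then to obtain part (2) from part (1) by applying the involution. There is no serious obstacle; the content is essentially bookkeeping.

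For part (1), recall from the proof of Lemma \ref{lemma: equivalent conditions for cyclic cellular algebra} that the map $a y_\gamma + \bar A^\gamma \mapsto a\, \generator{}\gamma$ is a well-defined $A$-module isomorphism from $(A y_\gamma + \bar A^\gamma)/\bar A^\gamma$ onto $\Delta^\gamma$. Under this isomorphism, the coset $v_\mft y_\gamma + \bar A^\gamma$ is sent to $v_\mft \generator{}\gamma = c^\gamma_\mft$. Since $\{c^\gamma_\mft : \mft \in \mathcal T(\gamma)\}$ is by definition an $R$-basis of $\Delta^\gamma$, it follows that $\{v_\mft y_\gamma + \bar A^\gamma : \mft \in \mathcal T(\gamma)\}$ is an $R$-basis of $(A y_\gamma + \bar A^\gamma)/\bar A^\gamma$. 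In particular,
\[
A y_\gamma + \bar A^\gamma \;=\; \spn\{v y_\gamma : v \in V^\gamma\} + \bar A^\gamma,
\]
which is exactly (1).

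For part (2), apply the involution $*$ to the identity in (1). Because $\bar A^\gamma$ is a $*$-invariant two-sided ideal, taking $*$ of both sides yields
\[
y_\gamma^{*}\, A + \bar A^\gamma \;=\; \spn\{y_\gamma^{*}\, v^{*} : v \in V^\gamma\} + \bar A^\gamma,
\]
using $(a y_\gamma)^* = y_\gamma^{*} a^{*}$ and the fact that $a \mapsto a^{*}$ permutes $A$. By condition (2a) of Lemma \ref{lemma: equivalent conditions for cyclic cellular algebra}, we have $y_\gamma - y_\gamma^{*} \in \bar A^\gamma$, and since $\bar A^\gamma$ is a two-sided ideal we may replace $y_\gamma^{*}$ by $y_\gamma$ in every occurrence on both sides modulo $\bar A^\gamma$. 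This gives
\[
y_\gamma A + \bar A^\gamma \;=\; \spn\{y_\gamma v^{*} : v \in V^\gamma\} + \bar A^\gamma,
\]
completing the proof of (2).
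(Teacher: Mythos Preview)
Your proof is correct and follows essentially the same approach as the paper: you use the isomorphism $a y_\gamma + \bar A^\gamma \mapsto a\,\generator{}\gamma$ to identify $\{v y_\gamma + \bar A^\gamma : v \in V^\gamma\}$ as a basis of $(A y_\gamma + \bar A^\gamma)/\bar A^\gamma$, and then apply the involution together with $y_\gamma^* \equiv y_\gamma \bmod \bar A^\gamma$ to obtain part (2).
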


\begin{proof} Considering the isomorphism  $a y_\gamma + \bar A^\gamma \mapsto a \generator {} \gamma$ from $(A y_\gamma + \bar A ^\gamma)/\bar A^\gamma$ to $\Delta^\gamma$, we see that
 $\{v y_\gamma + \bar A^\gamma : v \in V^\gamma\}$ is a basis of $(A y_\gamma + \bar A ^\gamma)/\bar A^\gamma$.  Lifting to $A^\gamma$, we have that  
 $\spn\{ v y_\gamma: v \in V^\gamma\} + \bar A^\gamma = Ay_\gamma + \bar A^\gamma$. 
 Applying the involution $*$ and using that $y_\gamma^* \equiv y_\gamma \mod \bar A^\gamma$, 
 we obtain statement (2).  
 \end{proof}

\begin{lemma}  \mbox{}
\begin{enumerate}
\item
For $\gamma \in \Gamma$ and $\mfs, \mft \in \mathcal T(\gamma)$, we have
$$
v_\mfs y_\gamma v_\mft^* \equiv c^\gamma_{\mfs, \mft} \mod {\bar A^\gamma}.
$$
\item  $\{ v_\mfs y_\gamma v_\mft^* : \gamma \in \Gamma \text{ and }
\mfs, \mft \in \mathcal T(\gamma)\}$ is a cellular basis  of $A$ equivalent to the original cellular basis $\mathcal C$ of $A$.  
\end{enumerate}
\end{lemma}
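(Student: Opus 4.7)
The plan is to reduce part (1) to a direct computation in the $A$--$A$ bimodule $\Delta^\gamma \otimes_R (\Delta^\gamma)^*$ via the canonical isomorphism $\alpha_\gamma$, and then to deduce part (2) by applying the globalization lemma (Lemma \ref{lemma: globalizing bases of cell modules}) with the choice $\beta_\gamma = \alpha_\gamma$ and $b_\mft = c^\gamma_\mft$.

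For part (1), I would work with the bimodule isomorphism $\alpha_\gamma : A^\gamma/\bar A^\gamma \to \Delta^\gamma \otimes_R (\Delta^\gamma)^*$. By construction of $y_\gamma$ in Notation \ref{notation cyclic cellular}, we have $\alpha_\gamma(y_\gamma + \bar A^\gamma) = \generator{}{\gamma} \otimes (\generator{}{\gamma})^*$. Since $\alpha_\gamma$ is an $A$--$A$ bimodule map, it follows that
$$
\alpha_\gamma(v_\mfs y_\gamma v_\mft^* + \bar A^\gamma) = v_\mfs \cdot \bigl(\generator{}{\gamma} \otimes (\generator{}{\gamma})^*\bigr) \cdot v_\mft^*.
$$
Using the explicit bimodule structure described in the excerpt, namely that $(x \otimes y^*) \cdot a = x \otimes (a^* y)^*$, the right-hand side simplifies to $(v_\mfs \generator{}{\gamma}) \otimes (v_\mft \generator{}{\gamma})^* = c^\gamma_\mfs \otimes (c^\gamma_\mft)^*$. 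Since $\alpha_\gamma(c^\gamma_{\mfs,\mft} + \bar A^\gamma) = c^\gamma_\mfs \otimes (c^\gamma_\mft)^*$ by definition of $\alpha_\gamma$, and $\alpha_\gamma$ is an isomorphism, we conclude that $v_\mfs y_\gamma v_\mft^* \equiv c^\gamma_{\mfs, \mft} \mod \bar A^\gamma$.

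For part (2), I would apply Lemma \ref{lemma: globalizing bases of cell modules} with $\beta_\gamma := \alpha_\gamma$ (which satisfies $* \circ \alpha_\gamma = \alpha_\gamma \circ *$ as noted in the discussion preceding Lemma \ref{delta tensor b isomorphic to delta}) and with the $R$--basis $\{b_\mft := c^\gamma_\mft : \mft \in \mathcal T(\gamma)\}$ of $\Delta^\gamma$. The lemma tells us that any choice of liftings $b^\gamma_{\mfs,\mft} \in A^\gamma$ of $\alpha_\gamma^{-1}(c^\gamma_\mfs \otimes (c^\gamma_\mft)^*) = c^\gamma_{\mfs,\mft} + \bar A^\gamma$ produces a cellular basis equivalent to $\mathcal C$. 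By part (1), the element $v_\mfs y_\gamma v_\mft^*$ is exactly such a lifting, so $\{v_\mfs y_\gamma v_\mft^*\}$ is the desired equivalent cellular basis.

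The only mildly delicate point is keeping the bimodule actions straight when unpacking $v_\mfs (\generator{}{\gamma} \otimes (\generator{}{\gamma})^*) v_\mft^*$; in particular, one must verify that the right action of $v_\mft^*$ on the second tensorand produces $(v_\mft \generator{}{\gamma})^* = (c^\gamma_\mft)^*$ rather than something involving $v_\mft^*$ directly. Once this is checked, the rest is a straightforward invocation of the already-proved globalization lemma.
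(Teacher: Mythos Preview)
Your proposal is correct and follows essentially the same approach as the paper. The paper's proof of (1) simply observes that both $v_\mfs y_\gamma v_\mft^*$ and $c^\gamma_{\mfs,\mft}$ are liftings in $A^\gamma$ of $\alpha_\gamma^{-1}(c^\gamma_\mfs \otimes (c^\gamma_\mft)^*)$, which is exactly what you verify in detail via the bimodule computation; for (2) the paper says only ``follows from point (1)'', whereas you explicitly invoke the globalization lemma---but that is precisely the mechanism behind the paper's terse deduction.
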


\begin{proof} Point (1) holds because both $v_\mfs y_\gamma v_\mft^*$ and
$c^\gamma_{\mfs, \mft}$  are liftings in $A^\gamma$ of $\alpha_\gamma\inv(c^\gamma_\mfs \otimes  (c^\gamma_\mft )^*)$.  Point (2) follows from point (1).
\end{proof}

We record a version of Lemma \ref{lemma: globalizing bases of cell modules} that is adapted to the context of cyclic cellular algebras.

\begin{lemma}  For each $\gamma \in \Gamma$, let
 $\{b_\mft: \mft \in \mathcal T(\gamma)\}$ be an $R$--basis of the cell module $\Delta^\gamma$.   For $\mft  \in \mathcal T(\gamma)$,  choose
$v'_\mft \in A$ such that $b_\mft = v'_\mft \,\generator {} \gamma$.    For $\mfs, \mft \in \mathcal T(\gamma)$,  let $b^\gamma_{\mfs, \mft} = v'_\mfs y_\gamma ( v'_\mft)^*$
Then
$\mathcal B = \{ b^\gamma_{\mfs, \mft} : \gamma \in \Gamma \text{ and } \mfs, \mft \in \mathcal T(\gamma)  \}$ is a cellular basis of $A$ equivalent to the original cellular basis $\mathcal C$.  
\end{lemma}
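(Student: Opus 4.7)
My plan is to reduce the statement directly to Lemma \ref{lemma: globalizing bases of cell modules} by taking $\beta_\gamma = \alpha_\gamma$ for each $\gamma \in \Gamma$ and taking the prescribed basis $\{b_\mft : \mft \in \mathcal T(\gamma)\}$ of $\Delta^\gamma$.  The bimodule isomorphisms $\alpha_\gamma$ satisfy $*\circ \alpha_\gamma = \alpha_\gamma \circ *$, so they are admissible inputs to that lemma.  To invoke it, all that remains is to check that each proposed basis element $b^\gamma_{\mfs, \mft} = v'_\mfs\, y_\gamma\, (v'_\mft)^*$ lies in $A^\gamma$ and reduces modulo $\bar A^\gamma$ to $\alpha_\gamma\inv(b_\mfs \otimes b_\mft^*)$.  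Membership in $A^\gamma$ is immediate since $y_\gamma \in A^\gamma$ and $A^\gamma$ is a two--sided ideal.

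The substantive step is the lifting identity, which I would establish by exploiting the bimodule structure.  Because $\alpha_\gamma$ is an $A$--$A$ bimodule map and $y_\gamma$ lifts $\generator{}{\gamma} \otimes (\generator{}{\gamma})^*$, we have
$$
\alpha_\gamma\bigl( v'_\mfs\, y_\gamma\, (v'_\mft)^* + \bar A^\gamma \bigr)
= v'_\mfs \cdot \bigl(\generator{}{\gamma} \otimes (\generator{}{\gamma})^*\bigr) \cdot (v'_\mft)^*.
$$
Unpacking the bimodule action on $\Delta^\gamma \otimes_R (\Delta^\gamma)^*$, the left factor becomes $v'_\mfs \generator{}{\gamma} = b_\mfs$, while the right factor is computed using the definition of the $*$--functor:  $(\generator{}{\gamma})^*\cdot (v'_\mft)^* = (v'_\mft \generator{}{\gamma})^* = b_\mft^*$.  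The image is therefore exactly $b_\mfs \otimes b_\mft^*$, as needed.

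With this identity verified, Lemma \ref{lemma: globalizing bases of cell modules} applies verbatim and delivers simultaneously that $\mathcal B$ is a cellular basis of $A$ and that it is equivalent to $\mathcal C$.  The only point that requires any care is the bookkeeping for the right action on the $*$--tensor factor; this is routine once one remembers that the functor $*$ reverses the order, so that right multiplication by $(v'_\mft)^*$ on $(\generator{}{\gamma})^*$ corresponds to left multiplication by $v'_\mft$ on $\generator{}{\gamma}$ before applying $*$.  Beyond this one calculation the proof is essentially a citation.
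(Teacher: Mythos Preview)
Your proposal is correct and follows exactly the same route as the paper: verify that $b^\gamma_{\mfs,\mft}$ is a lifting in $A^\gamma$ of $\alpha_\gamma\inv(b_\mfs \otimes b_\mft^*)$, then invoke Lemma~\ref{lemma: globalizing bases of cell modules} with $\beta_\gamma = \alpha_\gamma$. You have simply spelled out the bimodule computation that the paper leaves implicit.
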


\begin{proof}  For each $\gamma \in \Gamma$ and each $\mfs, \mft  \in \mathcal T(\gamma)$,
$b^\gamma_{\mfs, \mft}$ is a lifting in $A^\gamma$ of $\alpha_\gamma\inv(b_\mft \otimes (b_\mft )^*)$, so this follows at once from Lemma \ref{lemma: globalizing bases of cell modules}. 
\end{proof}

\subsection{Abelian cyclic cellular algebras}

\begin{lemma}  \label{lemma: properties of abelian cellular algebras}
Let $A$ be an abelian cellular algebra with cellular basis
$$
\mathcal C = \{ c_{s, t}^\gamma :  \gamma \in \Gamma \text{ and }  s, t \in \mathcal T(\gamma)\}.
$$
\begin{enumerate}
\item  For every $a \in A$ and $\gamma \in \Gamma$,  there exists $r(a) \in R$ such
that for all $x \in \Delta^\gamma$, $a x = r(a) x$.  Moreover, $a\mapsto r(a)$ is an 
$R$--algebra homomorphism from $A$ to $R$.   
\item  If $|\mathcal T(\gamma)| > 1$ 
for some $\gamma \in \Gamma$,  then
$(A^\gamma)^2 \subseteq \bar A^\gamma$.  
\end{enumerate}
\end{lemma}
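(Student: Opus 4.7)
For part (1), I will exploit the commutativity of $A$ via the identity $a\, c_{s,t}^\gamma = c_{s,t}^\gamma\, a$. Expanding the left-hand side using Definition~\ref{weak cellularity}(\ref{mult rule}) and the right-hand side using equation~\eqref{equation: right multiplication rule}, I obtain
$$
\sum_v r_v^s(a)\, c_{v,t}^\gamma \equiv \sum_w r_w^t(a^*)\, c_{s,w}^\gamma \mod \bar A^\gamma.
$$
Since the images of the $c_{u,w}^\gamma$ in $A^\gamma/\bar A^\gamma$ are $R$-linearly independent, comparing coefficients of each basis element will give me two conclusions at once: the off-diagonal coefficients $r_v^s(a)$ for $v \neq s$ must vanish, while the diagonal coefficient $r_s^s(a)$ must be independent of $s \in \mathcal T(\gamma)$ (matching the element $c_{s,t}^\gamma$ common to both sums forces $r_s^s(a) = r_t^t(a^*)$). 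Calling this common scalar $r(a)$, I obtain $a c_s^\gamma = r(a) c_s^\gamma$ for every $s$, hence $a x = r(a) x$ throughout $\Delta^\gamma$. The homomorphism property of $a \mapsto r(a)$ then follows because $a \mapsto (x \mapsto ax)$ is an $R$-algebra homomorphism from $A$ into $\End_R(\Delta^\gamma)$ whose image now lies in the scalars $R \cdot \mathrm{id}_{\Delta^\gamma}$.

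For part (2), I will combine part (1) with the bilinear form identity
$$
c_{s,t}^\gamma c_{u,v}^\gamma \equiv \langle c_t^\gamma, c_u^\gamma \rangle\, c_{s,v}^\gamma \mod \bar A^\gamma
$$
recalled earlier. Passing to the cell module, this asserts that $c_{s,t}^\gamma$ acts on $c_u^\gamma \in \Delta^\gamma$ as $\langle c_t^\gamma, c_u^\gamma \rangle\, c_s^\gamma$; by part (1), it must also act as the scalar $r(c_{s,t}^\gamma)$. Under the hypothesis $|\mathcal T(\gamma)| > 1$, for each $u$ I can choose $s \neq u$, so that $c_s^\gamma$ and $c_u^\gamma$ are $R$-linearly independent in $\Delta^\gamma$. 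The two expressions for the action then agree only if $\langle c_t^\gamma, c_u^\gamma \rangle = 0$ and $r(c_{s,t}^\gamma) = 0$. Varying $t$ and $u$ kills the bilinear form identically, and substituting back into the displayed product formula shows $c_{s,t}^\gamma c_{u,v}^\gamma \in \bar A^\gamma$ for all $s, t, u, v$. Since $A^\gamma = \bar A^\gamma + \spn\{c_{s,t}^\gamma : s, t \in \mathcal T(\gamma)\}$ and $\bar A^\gamma$ is a two-sided ideal, this yields $(A^\gamma)^2 \subseteq \bar A^\gamma$.

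The main subtlety is the asymmetric indexing in part (1): the left- and right-hand expansions of $a\, c_{s,t}^\gamma = c_{s,t}^\gamma\, a$ run over different indices ($v$ paired with fixed $t$ on the left, $w$ paired with fixed $s$ on the right), so the only basis element common to the two sums is $c_{s,t}^\gamma$ itself. Once this asymmetry is unwound, the argument is a direct coefficient comparison, and part (2) is then essentially a corollary via the bilinear form; no case analysis on $|\mathcal T(\gamma)|$ is needed for part (1), whereas part (2) uses the hypothesis $|\mathcal T(\gamma)| > 1$ exactly to guarantee an index $s \neq u$.
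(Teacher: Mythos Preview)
Your proof is correct and, for part~(1), essentially identical to the paper's: both expand $a\,c_{s,t}^\gamma = c_{s,t}^\gamma\,a$ via the left and right multiplication rules and compare coefficients to force the action to be scalar.

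For part~(2) there is a small but genuine difference in route. The paper applies commutativity a second time, commuting two basis elements $c_{a,t}^\gamma c_{b,t}^\gamma = c_{b,t}^\gamma c_{a,t}^\gamma$ with $a\neq b$ and reading off $\langle c_t^\gamma, c_b^\gamma\rangle\,c_{a,t}^\gamma \equiv \langle c_t^\gamma, c_a^\gamma\rangle\,c_{b,t}^\gamma \bmod \bar A^\gamma$, which forces the bilinear form to vanish. You instead reuse part~(1): the action of $c_{s,t}^\gamma$ on $c_u^\gamma$ is both $\langle c_t^\gamma, c_u^\gamma\rangle\,c_s^\gamma$ (from the bilinear form identity) and $r(c_{s,t}^\gamma)\,c_u^\gamma$ (scalar action), and choosing $s\neq u$ kills both coefficients. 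Your argument is slightly more economical since it avoids invoking commutativity again, but the two are equivalent in spirit and length.
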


\begin{proof} Let $a \in A$, $\gamma \in \Gamma$, and $s, t \in \mathcal T(\gamma)$.  
From Definition \ref{weak cellularity}
(\ref{mult rule}), we have   
$$
a c_{s, t}^\gamma  \equiv \sum_v r_v^s(a)  c_{v, t}^\gamma  \mod  \bar A^\gamma,
$$
where the coefficients $r_v^s(a)$ do not depend on $t \in \mathcal T(\gamma)$.   On the other hand, 
from Equation \ref{equation: right multiplication rule}, we have
$$a c_{s, t}^\gamma =  c_{s, t}^\gamma a 
\equiv \sum_w r_w^t(a^*)  c_{s, w}^\gamma  \mod  \bar A^\gamma,
$$   
where the coefficients $r_w^t(a^*)$ do not depend on $s \in \mathcal T(\gamma)$.  
Comparing these expansions, we have $r^s_v(a) = 0$ if $s \ne v$.  Moreover, we have
$$
a c_{s, t}^\gamma \equiv r^s_s(a) c_{s, t}^\gamma \equiv r^t_t(a^*) c_{s, t}^\gamma \mod \bar A^\gamma.
$$
It follows from this that the coefficient $r^s_s(a)$ is actually independent of $s \in \mathcal T(\gamma)$.   This implies statement (1) of the Lemma.

Suppose that  $|\mathcal T(\gamma)| > 1$ for some $\gamma \in \Gamma$.
Let 
$a, b, t \in \mathcal T(\gamma)$, with $a \ne b$.  It follows from
$c_{a, t}^\gamma c_{b, t}^\gamma = c_{b, t}^\gamma c_{a, t}^\gamma $
that
$$
\langle c_t^\gamma, c_b^\gamma\rangle c_{a, t}^\gamma \equiv
\langle c_t^\gamma, c_a^\gamma\rangle c_{b, t}^\gamma  \mod\ \bar A^\gamma.
$$
This implies  that $\langle c_t^\gamma, c_b^\gamma\rangle = 0$.  Since $b, t$ are 
arbitrary elements of $\mathcal T(\gamma)$,  this means that the bilinear form$
\langle\, , \rangle$ on $\Delta^\gamma$ is identically zero.  Statement (2) of the 
Lemma follows. 
\end{proof}

\begin{proposition} \label{proposition characterization of abelian s cellular algebras}
 Let $A$ be an abelian cellular algebra with cellular basis
$$
\mathcal C = \{ c_{s, t}^\gamma :  \gamma \in \Gamma \text{ and }  s, t \in \mathcal T(\gamma)\}.
$$  
The following are equivalent:
\begin{enumerate}
\item $A$ is cyclic cellular.
\item $|\mathcal T(\gamma)| = 1$ for all $\gamma \in \Gamma$.
\item The involution $*$ is trivial, i.e. $a^* = a$ for all $a \in A$.  
\end{enumerate}
\end{proposition}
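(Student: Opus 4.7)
The plan is to prove the implications $(2) \Rightarrow (1)$, $(1) \Rightarrow (2)$, $(3) \Rightarrow (2)$, and $(2) \Rightarrow (3)$. The first is immediate: under $(2)$ each $\Delta^\gamma$ is a free $R$-module of rank one, hence cyclic. For $(3) \Rightarrow (2)$, if the involution is trivial then combining $c^\gamma_{s,t} = (c^\gamma_{s,t})^*$ with the cellularity axiom $(c^\gamma_{s,t})^* \equiv c^\gamma_{t,s} \mod \bar A^\gamma$ gives $c^\gamma_{s,t} \equiv c^\gamma_{t,s} \mod \bar A^\gamma$; but the images of the $c^\gamma_{s,t}$ form an $R$-basis of $A^\gamma / \bar A^\gamma$, so $(s,t) = (t,s)$ for all $s, t \in \mathcal T(\gamma)$, forcing $|\mathcal T(\gamma)| = 1$.

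For $(1) \Rightarrow (2)$ I would invoke Lemma \ref{lemma: properties of abelian cellular algebras}(1), which for abelian $A$ provides a character $r_\gamma : A \to R$ such that $a \cdot x = r_\gamma(a) x$ for every $a \in A$ and $x \in \Delta^\gamma$. Hence if $\delta$ is a cyclic generator of $\Delta^\gamma$ then $\Delta^\gamma = A\delta \subseteq R\delta$, so the free $R$-module $\Delta^\gamma$ has rank one and $|\mathcal T(\gamma)| = 1$.

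The nontrivial direction is $(2) \Rightarrow (3)$. Writing the basis as $\{c^\gamma : \gamma \in \Gamma\}$, the cellularity axiom becomes $(c^\gamma)^* = c^\gamma + z_\gamma$ with $z_\gamma \in \bar A^\gamma$, while the involution identity $(c^\gamma)^{**} = c^\gamma$ forces $z_\gamma^* = -z_\gamma$. The plan is to prove $z_\gamma = 0$ by downward induction on the poset $\Gamma$. When $\gamma$ is maximal, $\bar A^\gamma = 0$, so $z_\gamma = 0$. For the inductive step, expand $z_\gamma = \sum_{\mu > \gamma} \alpha_\mu c^\mu$; the inductive hypothesis yields $z_\gamma^* = \sum_\mu \alpha_\mu (c^\mu)^* = \sum_\mu \alpha_\mu c^\mu = z_\gamma$, which combined with $z_\gamma^* = -z_\gamma$ gives $2 z_\gamma = 0$. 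Once every $z_\gamma$ vanishes, $R$-linearity of $*$ immediately yields $a^* = a$ for every $a \in A$. The main obstacle is precisely this final step of the induction: concluding $z_\gamma = 0$ from $2 z_\gamma = 0$ requires that $2$ not be a zero divisor in $R$ (equivalently, $\mathrm{char}(R) \ne 2$); in characteristic two, a different argument, or the strict cellularity convention (3$'$) of Definition \ref{strict cellularity}, would be needed to close the implication.
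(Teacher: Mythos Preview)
Your arguments for $(2)\Rightarrow(1)$, $(3)\Rightarrow(2)$, and $(1)\Rightarrow(2)$ are correct and essentially match the paper's. The paper phrases $(1)\Rightarrow(2)$ slightly differently---it writes $c^\gamma_{s,t}\equiv v_s\,y_\gamma\,(v_t)^*\equiv r\,y_\gamma\pmod{\bar A^\gamma}$ for some $r\in R$ via Lemma~\ref{lemma: properties of abelian cellular algebras}(1), and concludes that all basis elements of $A^\gamma/\bar A^\gamma$ are proportional---but this is the same use of the ``$A$ acts by scalars on $\Delta^\gamma$'' fact that you invoke.

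For $(2)\Rightarrow(3)$ the paper simply asserts that this is ``obvious from properties (1) and (3) of Definition~\ref{weak cellularity}'' and gives no further argument. Your caution here is justified: under the weak cellularity convention the implication can genuinely fail in characteristic~$2$. For instance, take $R=\mathbb F_2$ and $A=\mathbb F_2[x]/(x^3)$ with $\Gamma=\{1,2,3\}$ totally ordered by $3>2>1$, each $\mathcal T(\gamma)$ a singleton, and cellular basis $c^1=1$, $c^2=x$, $c^3=x^2$. The map determined by $1^*=1$, $x^*=x+x^2$, $(x^2)^*=x^2$ is a nontrivial $R$-linear algebra involution (one checks $(x+x^2)^2=x^2$ in characteristic~$2$), and it satisfies $(c^\gamma)^*\equiv c^\gamma\pmod{\bar A^\gamma}$ for every $\gamma$. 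So $(2)$ holds while $(3)$ fails. Your inductive argument yielding $2z_\gamma=0$ is therefore exactly as far as one can go in general; the implication $(2)\Rightarrow(3)$ needs either $\mathrm{char}\,R\ne 2$ or the strict cellularity axiom~$(3')$, just as you say.
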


\begin{proof}  It is obvious from properties  (1) and (3)  of Definition \ref{weak 
cellularity} that the involution on $A$ is trivial if and only if each $\mathcal 
T(\gamma)$ is a singleton.    It is also obvious that if each $\mathcal T(\gamma)$ is 
a singleton, then $A$ is cyclic cellular.    Finally, if $A$ is cyclic cellular, then for all $\gamma 
\in \Gamma$ and $s, t \in \mathcal T(\gamma)$, there exists $r \in R$ such that  
$$
  c_{s, t}^\gamma = v_s^\gamma y_\gamma (v_t^\gamma)^* \equiv r y_\gamma \mod \bar A^\gamma,
$$
by Lemma \ref{lemma: properties of abelian cellular algebras}, part (1).  Thus   
$\mathcal T(\gamma)$ is a singleton for all $\gamma \in \Gamma$. 
  \end{proof}

\begin{example}  Here is an example of an abelian cellular algebra that is not cyclic
cellular.  Let $\Gamma$ be the two element set $\{1, 2\}$,  with total order, 
$2 > 1$.  Let $\mathcal T(1) = \{1\}$ and $\mathcal T(2) = \{1, 2\}$.  Let $A$ be the 
free $\Z$--module with basis consisting of $1 = c^1_{1, 1}$  and elements 
$c^2_{s, t}$ for $1 \le s, t \le 2$.   Declare $(c^\gamma_{s, t})^* = c^\gamma_{t, s}$ 
for all $\gamma \in \Gamma$ and $s, t \in \mathcal T(\gamma)$.   Declare $1 x = x 
1 = x$ for all basis elements $x$, and all other products of basis elements to be 
zero.  Then $A$ is an abelian  cellular algebra, and $A$ is not cyclic cellular,  since $
\mathcal T(2)$ is not a singleton. 
\end{example}

\subsection{Examples of cyclic cellular algebras}  Many of the most important examples of cellular algebras are cyclic cellular.  

\begin{example} \label{example: Hecke algebra is cyclic cellular} {\em The Hecke algebra of type $A_{n-1}$.}  Fix $n \ge 1$.  Let $R$ be an integral domain and $q$ an invertible element of $R$.  The Hecke algebra $H_n = H_{n, R}(q)$  is the unital $R$--algebra with generators $T_1, \dots T_{n-1}$  satisfying the braid relations 
$T_i T_{i+1} T_i = T_{i+1} T_i T_{i+1}$ for $i \le n-2$  and $T_i T_j = T_j T_i$  when $|i-j| \ge 2$, 
and the quadratic relation $(T_i -q)(T_i +1) = 0$ for all $i$.    Let $s_i$ denote the transposition $(i, i+1)$ in 
 the symmetric group $\symn$.   Let $\pi \in \symn$ and let $\pi = s_{i_1} \cdots s_{i_\ell}$ be a reduced expression for $\pi$;  the element $T_{i_1} \cdots T_{i_\ell}$ in $H_n$ is independent of the choice of the reduced expression and is denoted by $T_\pi$.   The set of $T_\pi$ as $\pi$ varies over $\symn$ is a linear basis of $H_n$.     Because of the symmetry of the defining relations, $H_n$ has a unique $R$--linear algebra involution $*$ such that $T_i^* =T_i$ for each  $i$; we have $T_\pi^* = T_{\pi\inv}$.   
 
 Let $\Lambda_n$ denote the set of partitions of $n$;  we identify partitions with their Young diagrams.  For $\la \in \Lambda_n$,  a $\la$--tableau is a filling of the boxes or cells of $\la$ with the numbers $1, 2, \dots, n$ without repetition.   A tableau is standard if the entries increase from left to right in each row and from top to bottom in each column.  We let $\mft^\la$ denote the $\la$--tableau
 in which the numbers $1, 2, \dots, n$ are entered in order from left to right along the rows of $\la$.  
 The symmetric group $\symn$ acts transitively and freely (on the left)  on $\la$--tableaux by acting on the entries.   Therefore, for each $\la$--tableau $\mfs$ there exists a unique element $d(\mfs) \in \symn$ such that $\mfs = d(\mfs) \mft^\la$.  
 
Murphy ~\cite{murphy-hecke95} described a cellular basis of $H_n$ as follows.  The partially ordered set in the cell datum is $\Lambda_n$ with dominance order $\dominates$.   For each $\la \in \Lambda_n$, $\mathcal T(\la)$ is the set of standard $\la$--tableaux.  For $\la \in \Lambda_n$, define $m_\la = \sum_{\pi \in \sym_\la} T_\pi$,   where $\sym_\la$ is the stabilizer of $\mft^\la$.  For $\mfs, \mft \in \mathcal T(\la)$, define
$$
m^\la_{\mfs, \mft} = T_{d(\mfs)} m_\la  T_{d(\mft)}^*. 
$$
Murphy shows that $\{m^\la_{\mfs, \mft} : \la \in \Lambda_n \text{ and }  \mfs, \mft \in \Lambda_n\}$ is a cellular basis of $H_n$.   The cell module $\Delta^\la$ is spanned by $\{  T_{d(\mfs)} m_\la + \overline H_n^\la : \mfs \in \mathcal T(\la)\}$.  The cell module  $\Delta^\la$ is evidently cyclic with generator 
$m_\la + \overline H_n^\la$.  
The Hecke algebra $H_n$ with the Murphy basis is the prototypical example of a cyclic cellular algebra.
\end{example}

\begin{example} {\em  The cyclotomic Hecke algebras.}  Fix $n \ge 1$ and $r \ge 1$.  Let $R$ be an integral domain and $q$, $Q_1, \dots, Q_r$  invertible elements of $R$.  The cyclotomic Hecke algebra
$\mathcal H_n = \mathcal H_{n, R}(q, Q_1, \dots, Q_r)$  
~\cite{ariki-koike, ariki-book, DJM} is the unital $R$--algebra with generators $T_0, T_1, \dots, T_{n-1}$ such that
\begin{enumerate}
\item  $T_1, \dots, T_{n-1}$ satisfy the relations of the Hecke algebra $H_{n, R}(q)$.   
\item  $T_0 T_1 T_0 T_1 = T_1 T_0 T_1 T_0$, and $T_0$ commutes with $T_i$ for $i \ge 2$.  
\item $\prod_{j = 1}^r  (T_0 -Q_j) = 0$.  
\end{enumerate}
In ~\cite{DJM} a cellular basis of $\mathcal H_n$ is described that is analogous to the Murphy basis of the Hecke algebra $H_n$.    The cell modules defined via this basis are manifestly cyclic. 
\end{example}

\begin{example}{\em The $q$--Schur algebras.}  There are a number of different descriptions of the $q$--Schur algebras in the literature;  here we follow the approach in ~\cite{Mathas-book}, Chapter 4.\footnote{We note that our convention is that the symmetric group $\symn$ acts on the left on 
$\{1, 2, \dots, n\}$   rather than on the right as in ~\cite{Mathas-book}.  Hence our symmetric group is the opposite group of the symmetric group in ~\cite{Mathas-book}, and our Hecke algebra is the opposite algebra of the Hecke algebra in ~\cite{Mathas-book}.  Right modules in ~\cite{Mathas-book} become left modules here.}  Let $R$ be an integral domain and $q$ an invertible element in $R$.  Write $H_n$ for the Hecke algebra over $R$ with parameter $q$ as in Example \ref{example: Hecke algebra is cyclic cellular}.  For $\mu$ a composition of $n$, the permutation module $M^\mu$ of $H_n$ is  defined by $M^\mu = H_n m_\mu$.   Fix $d \le n$;  let
$C(d, n)$ denote the compositions of $n$ with no more than $d$ parts and $\Lambda(d, n)$ the partitions in $C(d, n)$;  $\Lambda(d, n)$ is partially ordered by dominance.    
The $q$--Schur algebra $\mathcal S = \mathcal S(d, n)$ is
$$
\mathcal S = \End_{H_n}\left(\bigoplus_{\mu \in C(d, n)}  M^\mu \right ).
$$
For $\la \in \Lambda(d, n)$ and $\mu \in C(d, n)$,  let $\mathcal T_0(\la, \mu)$ be the set of semistandard $\lambda$--tableaux of type $\mu$;  see ~\cite{Mathas-book}, page 56.   Let $\mathcal T_0(\la)$  denote $\cup_{\mu \in C(d, n)} \mathcal T_0(\la, \mu)$.  
One shows that for $\mu, \nu \in C(d, n)$,  $\Hom_{H_n}(M^\nu, M^\mu) \cong  (M^\nu)^* \cap M^\mu \subseteq H_n$,  as $R$--modules.  Moreover, $(M^\nu)^* \cap M^\mu$ has an $R$--basis
$\{m^\la_{\sf S, \sf T} \}$;  here $\la$ varies over all partitions  in $\Lambda(d, n)$  which admit semistandard   tableaux of types $\mu$ and $\nu$, $\sf T \in \mathcal T_0(\la, \mu)$,  $\sf S \in \mathcal T_0(\la, \nu)$,  and $m^\la_{\sf S, \sf T} $ is a certain sum of Murphy basis elements $m^\la_{\mfs, \mft}$ of $H_n$.   Corresponding to an element $m^\la_{\sf S, \sf T} \in (M^\nu)^* \cap M^\mu$, 
we obtain $\varphi^\la_{\sf S, \sf T} \in \mathcal S$ defined by $\varphi^\la_{\sf S, \sf T}(h \,m_\alpha) = \delta_{\alpha, \nu} \,h \,m^\la_{\sf S, \sf T}$.  Then
$$
\{\varphi^\la_{\sf S, \sf T}: \la \in \Lambda(d, n) \text{ and }  \sf S, \sf T \in \mathcal T_0(\la)\}
$$
is a cellular basis of $\mathcal S$. Let $\sf T^\la$ be the unique element of $\mathcal T_0(\la, \la)$;
then $m^\la_{\sf T^\la, \sf T^\la} = m_\la$ and
\begin{equation} \label{cyclic element for schur algebra cell module}
\varphi^\la_{\sf S, \sf T^\la} \, \varphi^\la_{\sf T^\la, \sf T^\la} = \varphi^\la_{\sf S, \sf T^\la},
\end{equation}
for any $\sf S \in \mathcal T_0(\la)$.   The cell module $\Delta^\la_{\mathcal S}$ of $\mathcal S$ is
the span of 
$
\{\varphi^\la_{\sf S, \sf T^\la} + \overline{\mathcal S}^\la : \sf S \in \mathcal T_0(\la)\},
$
and it follows from Equation \eqref{cyclic element for schur algebra cell module} that  the cell module is cyclic with generator $\varphi^\la_{\sf T^\la, \sf T^\la} + \overline{\mathcal S}^\la$.
\end{example}

\begin{example} {\em Algebras containing a Jones basic construction.}   In ~\cite{Goodman-Graber1}, 
a framework was developed for proving cellularity of a tower $(A_n)_{n \ge 0}$ of algebras which is obtained ``by repeated Jones basic constructions"  from a tower $(Q_n)_{n \ge 0}$ of cellular algebras. 
  In ~\cite{Enyang-Goodman},  the framework was refined by adding the assumption that the algebras $Q_n$ are cyclic cellular;  the refinement of the main theorem from   ~\cite{Goodman-Graber1} then gives that the algebras $A_n$  are cyclic cellular.  This theory applies in particular to the following examples:
Brauer algebras, BMW algebras, Jones Temperley Lieb algebras, and partition algebras. The theory   yields that all of these algebras are cyclic cellular. 

It should be noted that the cellularity these algebras was known previously, and that in general one can verify from previous proofs of cellularity that the algebras are cyclic cellular.  We refer the reader to the relevant  citations  of the literature in ~\cite{Goodman-Graber1, Enyang-Goodman}. 
\end{example}

\section{Preliminaries}
 \subsection{Combinatorial preliminaries}
 A  {\em composition} is a finite sequence of nonnegative integers, and a 
{\em partition} is a composition whose entries are non-increasing. The entries of a  
composition  are called its {\em parts}. For composition $\alpha$, we denote by 
 $|\alpha|$  the sum of the parts of $\alpha$.  Let $C_n$ denote the set of compositions $\alpha$ with $|\alpha| = n$ and $\Lambda_n$  the set of partitions $\lambda$ with $|\lambda| = n$.  We also say that
 $\alpha$ is a composition of $n$ if $\alpha \in C_n$ and that $\lambda$ is a partition of $n$ if $\lambda \in \Lambda_n$.  
 
 The {\em Young diagram} of a 
composition $\alpha = (\alpha_i)$ is the set 
 $$
  \{(i, j) : i \ge 1 \ \text{and} \ j \le \alpha_i\},
 $$
which is regarded as a left justified array of nodes or boxes in the plane, as usual.   
It is  convenient  to identify a composition with its Young diagram.
 
Let $(\Gamma,\ge)$ be a finite partially ordered set  of cardinality $r$.   Let 
$\multicomp$ denote the set of maps $\bm \lambda$ from  $\Gamma$ to the set of 
compositions  such that $\sum_{\gamma \in \Gamma}|\bm \lambda(\gamma)| = n$. Let $
\multipart \subseteq \multicomp$ denote the set of such maps such that each $
\blam(\gamma)$ is a partition.  Fix a listing of $\Gamma$,  
\begin{equation} 
\Gamma = (\gamma\paren 1,\dots, \gamma\paren r). 
\end{equation} 
We suppose (although this is not essential) that the listing is consistent with the partial 
order of  $\Gamma$, in the sense that
$$
 \gamma\paren i \ge \gamma\paren j \implies i \le j.
$$
Using  this fixed listing, we identify $\multicomp$ with sequences of compositions 
(multicompositions)  via
$$
\lambda\power i = \bm \lambda(\gamma(i)) \quad (1 \le i \le r).
$$
To each multicomposition $\blam$ we associate the composition $\alpha(\blam)$  
with parts $\alpha(\blam)_i = |\lambda\power i|$.  The Young diagram of a 
multicomposition is the sequence of Young diagrams of its components. We will identify a multicomposition with its Young diagram.

For $\blam \in \multicomp$, a  {\em $\blam$--tableau} $\mft = (\mft\power 1, \dots, \mft \power r)$is a filling of the 
nodes of $\blam$ with the numbers $\{1, \dots, n\}$ without 
repetition. The $\lambda\power k$--tableaux $\mft \power k$ are called the components 
of $\mft$.   A $\blam$--tableau is called {\em row standard} if the entries in each row of 
each component increase from left to right.  A $\blam$--tableau  $\mft$ is called {\em 
standard} if $\blam \in \multipart$, $\mft$ is row standard, and the entries in each 
column of each component  increase from top to bottom. 

Let $\blam \in \multicomp$.  The symmetric group $\symn$ acts freely and transitively on $\blam$--tableaux by acting on the entries of the tableaux.  
    Let  $\mft^{\blam}$  be the 
  $\blam$--tableau in which the entries $1, 2, \dots, n$ 
appear in order along the rows of the first component, then along the rows of the second 
component, and so forth.  The row stabilizer of $\mft^{\blam}$ (that is, the subgroup of permutations leaving invariant the set of entries in each row of  $\mft^\blam$)  is the Young subgroup $\sym_{\bm 
\lambda}$.    For each $\blam$--tableau $\mft$,  let $d(\mft)$ be the unique permutation such that
$\mft = d(\mft) \mft^{\blam}$;  then $\mft \mapsto d(\mft)$ is a bijection between $\blam$--tableaux and permutations.  The tableau $\mft$ is row standard if and only if $d(\mft)$ is a distinguished (minimal length) left coset representative of $\sym_{\blam}$ 
in $\symn$; see ~\cite{Mathas-book}, Proposition 3.3.

We define two partial orders on  $\multicomp$:
\begin{definition}  Let $\blam$ and $\bmu$ be elements of  $\multicomp$.
\begin{enumerate}
\item Dominance order $\dominates$ is  defined as in ~\cite{DJM}, Definition 3.11, namely $\blam \dominates \bmu$ if for all $k$  ($1 \le k \le r$)  and all $j \ge 0$,
$$
\sum_{s < k} |\lambda\power s| + \sum_{i \le j} \lambda\power k_i \ge
\sum_{s < k} |\mu\power s| + \sum_{i \le j} \mu\power k_i.
$$
We write $\blam \StrictlyDominates \bmu$ if $\blam \dominates \bmu$ and $\blam \ne \bmu$.  
\item $\Gamma$--dominance order $\GammaDominates$ is  defined similarly:  
$\blam \GammaDominates \bmu$ if for all $\gamma \in \Gamma$,  and for all $j \ge  0$, 
$$
\sum_{\gamma' > \gamma} |\blam(\gamma')| + \sum_{i \le j} \blam(\gamma)_i \ge
\sum_{\gamma' > \gamma} |\bmu(\gamma')| + \sum_{i \le j} \bmu(\gamma)_i .
$$
We write $\blam \StrictlyGammaDominates \bmu$ if $\blam \GammaDominates \bmu$ and $\blam \ne \bmu$.  
\end{enumerate}
\end{definition}

\begin{example}  
Neither of the conditions $\blam \dominates \bmu$ and 
$\blam \GammaDominates \bmu$ implies the other.
\begin{enumerate}
\item  Let $\Gamma = \{a, b\}$, with $a$ and $b$ incomparable. Choose the listing 
$\Gamma = (a, b)$.  Let $\blam(a) = (2)$, 
$\blam(b) = \emptyset$ and $\bmu(a) = \bmu(b) = (1)$.  Then 
$\blam \dominates \bmu$   but 
$\blam$ and $\bmu$ are incomparable in $\Gamma$-dominance order.
\item Let $\Gamma = \{a, b, c, d\}$ with $a > x$ for each $x \in \{b, c, d\}$ and 
$\{b, c, d\}$  mutually 
incomparable.  Choose the listing $\Gamma = (a, b, c, d)$.   Let $\blam(a) = (4)$, $\blam(b) = (2)$, $
\blam(c) = (1)$, $\blam(d) = (1)$ and let $\bmu(a ) = (3)$, $\bmu(b ) = (3)$, $\bmu(c ) = (2)$, $
\bmu(d ) = (0)$.  Then $\blam \GammaDominates \bmu$ but $\blam$ and $\bmu$ are incomparable 
in dominance order.  In this example, $\blam$ cannot be obtained from $\bmu$ by any sequence of 
``raising operators,"  c.f Macdonald ~\cite{macdonald-book}, Section I.1.
\end{enumerate}  
\end{example}

We extend these partial orders to partial orders on the set of row standard tableaux.
 If $\mft$ is a row standard $
\blam$--tableau, then for each $m \le n$,  the entries $1, 2, \dots, m$ in $\mft$   occupy 
the diagram of a multicomposition, say $\bmu$;  the $\bmu$--tableau obtained by removing the nodes containing $m+1, \dots, n$ from $\mft$ is denoted by $\mft \downarrow m$;  we also denote 
$\bmu$ by 
$[\mft  \downarrow m]$.

\begin{definition}  Let $\blam$ and $\bmu$ be elements of  $\multicomp$, and let 
 $\mft$ be a   row   standard  $\blam$--tableau, and $\mfs$ a row standard 
 $\bmu$--tableau. 
\begin{enumerate}
\item We say that {\em $\mft$  dominates $\mfs$} and write
$\mft \dominates \mfs$, if $[\mft \downarrow m] \dominates [\mfs \downarrow m]$
 for each $m$.  
We write $\mft \StrictlyDominates \mfs$ if $\mft \dominates \mfs$ and 
$\mft \ne \mfs$.   

\item Let $\mft$ be a   row   standard  $\blam$--tableau, and $\mfs$ a row standard 
$\bmu$--tableau.  We say that {\em $\mft$  $\Gamma$--dominates $\mfs$} 
and write
$\mft \GammaDominates \mfs$, if 
$[\mft \downarrow m]\GammaDominates [\mfs \downarrow m]$
 for each $m$. We write $\mft \StrictlyGammaDominates \mfs$ if 
 $\mft \GammaDominates \mfs$ 
 and $\mft \ne \mfs$.   
\end{enumerate}
\end{definition}

\begin{definition}  Let $\blam \in \multicomp$ and let $\mft$ be a $\blam$--tableau. 
 Define the 
component function of $\mft$ by $\comp_{\mft}(j) = k$ if $j$ appears in the $k$--th 
component of $\mft$.  For a $\blam$--tableau $\mft$ and a $\bmu$--tableau 
$\mfs$,  write $\comp_\mft = \comp_\mfs$ if $\comp_\mft(j) = \comp_\mfs(j)$ 
for all $j$.  
\end{definition}

\begin{definition}  \label{definition initial kind}
A $\blam$--tableau $\mft$ is {\em of the initial kind} if 
$\comp_\mft = \comp_{\mft^{\blam}}$.  Equivalently,  $d(\mft) \in \sym_{\alpha(\blam)}$.  
\end{definition}

\begin{remark}   \label{remark on dominance and gamma dominance}
 If $\blam$ and $\bmu$ satisfy $|\blam(\gamma)| = |\bmu(\gamma)|$ for all 
 $\gamma$ (i.e. $\alpha(\blam) = \alpha(\bmu)$),  then
$$
\blam \dominates \bmu \iff \blam \GammaDominates \bmu \iff \blam(\gamma) \dominates \bmu(\gamma) \ \ \text{for all} \ \ \gamma.
$$
Suppose that 
$\mft$ is  a $\blam$--tableau and  $\mfs$  a $\bmu$--tableau,  with 
$\comp_\mft = \comp_\mfs$.  
It follows that $\alpha([\mft\downarrow m]) = \alpha([\mfs \downarrow m])$ for all $m$.
 Therefore
$$
\mft \dominates \mfs \iff \mft \GammaDominates \mfs \iff \mft\power k \dominates \mfs\power k  \ \ \text{for all} \ \ k.
$$
\end{remark}

\begin{remark}   All of the definitions of this section apply in particular when $r = 1$.  Thus we have defined $\lambda$--tableaux and row standard $\lambda$--tableaux for $\lambda \in C_n$, standard $\lambda$--tableaux for $\lambda \in \Lambda_n$,  dominance order on compositions and on row standard tableaux, and the ``superstandard" tableau $\mft^\lambda$ for $\lambda \in C_n$.  
\end{remark}

\subsection{Cellularity  of tensor products}  \label{subsection: cellularity of tensor products}
Let $A\power i$ ($1 \le i \le K$) be cellular algebras over an integral domain $R$, with  algebra involutions denoted by $*$,  partially ordered sets $(\Gamma_i, \ge)$,  index sets $\mathcal T(\gamma)$  for $\gamma \in \Gamma_i$,   and cellular bases $\{a^\gamma_{s, t} :  \gamma \in \Gamma_i,  s, t \in \mathcal T(\gamma)\}$.  Then 
 $A\power 1 \otimes \cdots \otimes A \power K$ is also a cellular algebra.  The involution on the tensor product algebra is the tensor product of the involutions on each $A \power i$.  The partially ordered set in the cellular structure for the tensor product algebra is the cartesian product
 $\Pi = \Gamma_1 \times \cdots \times \Gamma_K$,  with the product partial order, namely
 $(\gamma_1, \dots, \gamma_K) \ge (\gamma'_1, \dots, \gamma'_K)$  if $\gamma_i \ge \gamma'_k$ for all $i$.   For each $\bgamma = (\gamma_1, \dots, \gamma_K) \in \Pi$,  the index set
 $\mathcal T(\bgamma)$ is $\mathcal T(\gamma_1) \times \cdots \times \mathcal T(\gamma_K)$. 
 The cellular basis is the set of simple tensors
 $$
 a^{\bgamma}_{s, t} = a^{\gamma_1}_{s_1, t_1} \otimes \cdots \otimes a^{\gamma_K}_{s_K, t_K},
 $$
 for $\bgamma = (\gamma_1, \dots, \gamma_K) \in \Pi$, and 
 $s = (s_1, \dots, s_K), t = (t_1, \dots, t_K) \in \mathcal T(\bgamma)$.   The cellular structure of the tensor product algebra is strict if each $A\power i$ is strictly cellular.  If $\Delta^\gamma$  ($\gamma \in \Gamma_i$)  denote the cell modules of each $A\power i$,  then the cell modules of the tensor product algebra are
 $$
 \Delta^{\bgamma} =  \Delta^{\gamma_1} \otimes \cdots \otimes  \Delta^{\gamma_K},
 $$
 for $\bgamma = (\gamma_1, \dots, \gamma_K)  \in \Pi$.  
 
 Suppose that each $A\power i$ is cyclic cellular with data
 $$
 \generator {} \gamma, \  V^\gamma = \{v_s : s \in \mathcal T(\gamma)\} \text{ and }  y_\gamma,
 $$
 for each $\gamma \in \Gamma_i$, as in Notation \ref{notation cyclic cellular}.  
 For  $\bgamma = (\gamma_1, \dots, \gamma_K) \in \Pi$ and
 $\bm s = (s_1, \dots, s_K) \in \mathcal T(\bgamma)$, define
 $$
 v_{\bm s} = v_{s_1} \otimes \cdots \otimes v_{s_K}.
 $$
 Then the tensor product algebra is cyclic cellular as well, with
 $$
 \generator {} \bgamma = \generator {} {\gamma_1} \otimes \cdots \otimes \generator {} {\gamma_K}, \ 
 V^{\bgamma} = \{v_{\bm s} : \bm s \in \mathcal T(\bgamma)\} , \text{ and }
 y_{\bgamma} = y_{\gamma_1} \otimes \cdots  \otimes y_{\gamma_K}.
 $$

\subsection{
The Murphy basis of $R\symn$}  \label{subsection: Murphy basis}
Let $R$ be an integral domain.  Fix $n \ge 1$.  Consider the set $\Lambda_n$ of partitions of $n$ with dominance order.  For each $\lambda \in \Lambda_n$,  let $\mathcal T(\lambda)$ be the set of standard $\lambda$--tableaux.  Let $*$ be the algebra involution of $R \symn$ such that $\pi^* = \pi\inv$ for $\pi \in \symn$.  For $\lambda \in \Lambda_n$,  let 
$x_\lambda = \sum_{w \in \sym_\lambda} w$,  where $\sym_\lambda$ is the row stabilizer of $\mft^\lambda$.   For $\mfs, \mft \in \mathcal T(\lambda)$, define
$$
m^\lambda_{\mfs, \mft} = d(\mfs) x_\lambda d(\mft)^*.
$$
The symmetric group algebra is the special case of the Hecke algebra $H_{n, R}(q)$ with $q = 1$, and the collection of elements $m^\lambda_{\mfs, \mft}$ is the specialization of the Murphy basis of the Hecke algebra, see Example \ref{example: Hecke algebra is cyclic cellular}.  Thus
 $\{m^\lambda_{\mfs, \mft} : \lambda \in \Lambda_n, \mfs, \mft \in \mathcal T(\lambda)\}$ is a cellular basis of $R\symn$, and $R\symn$ is cyclic cellular.

\subsection{The cellular structure of $R\sym_\alpha$} 
\label{subsection the cellular structure of the group algebra of a Young subgroup}
 Let $R$ be an integral domain.  Let $\alpha = (\alpha_1, \dots, \alpha_r)$ be a composition of an integer $n$, with $r$ parts.  Let $\sym_\alpha \subseteq \symn$ be the Young subgroup, the row stabilizer of $\mft^\alpha$.  
Let $n_k = \alpha_1 + \cdots + \alpha_{k-1}$  for $1 \le k \le r$ and for each $k$ realize $\sym_{\alpha_k}$ as the symmetric group of the set $\{n_k + 1, \dots, n_{k+1} \}$.  
Then $ \sym_{\alpha_1} \times \cdots \times \sym_{\alpha_r} \cong \sym_\alpha$ by
$(\pi_1, \dots, \pi_r) \mapsto \pi_1 \pi_2 \cdots \pi_r$, and 
$R\sym_{\alpha_1} \otimes \cdots \otimes R\sym_{\alpha_r} \cong R\sym_\alpha$ by
\begin{equation}\label{R S alpha correspondence}
\pi_1 \otimes \cdots \otimes \pi_r \mapsto  \pi_1 \pi_2 \cdots \pi_r.
\end{equation}  
By the discussion in Sections \ref{subsection: cellularity of tensor products} and 
\ref{subsection: Murphy basis}, the tensor product algebra 
$R\sym_{\alpha_1} \otimes \cdots \otimes R\sym_{\alpha_r} $ is cellular;  the partially ordered set for the cellular structure is $\Lambda_{\alpha_1} \times \cdots \times \Lambda_{\alpha_r}$ with the product of dominance partial order on each component;  but this is the same as the set 
$\Lambda_\alpha$ of multipartitions $\blam$ with $\alpha(\blam) = \alpha$,  with dominance order of multipartitions, by Remark \ref{remark on dominance and gamma dominance}.  For $\blam  = (\la\power 1, \dots, \la \power r) \in \Lambda_\alpha$,  the index set 
$\mathcal T(\blam)$ is $\mathcal T(\la\power 1) \times \cdots \times \mathcal T(\la\power r)$;  but this can be identified with the set of standard $\blam$--tableaux of the initial kind.

The cellular basis is the tensor product of the Murphy cellular basis of each $R\sym_{\alpha_i}$.  Under the isomorphism \refc{R S alpha correspondence},  this basis is carried to the set of elements
$$
m^\blam_{\mfs, \mft} = d(\mfs) x_\blam d(\mft)^*,
$$
where $\blam \in \Lambda_\alpha$, and $\mfs$ and $\mft$ are standard $\blam$--tableaux of the initial kind.
The cell modules for the tensor product algebra $R\sym_{\alpha_1} \otimes \cdots \otimes R\sym_{\alpha_r} $ are the modules
$$
\Delta^{\blam}_R =  \Delta^{\lambda\power 1}_r \otimes \cdots \otimes  \Delta^{\lambda\power r}_R,
$$
where $\blam \in \Lambda_\alpha$ and 
$\Delta^{\lambda\power i}_R$ denotes a cell module for $R\sym_{\alpha_i}$.   

We are going to write $J^\blam$ for $\overline{R\sym_\alpha}^\blam = \spn\{\mm \bmu \mfu \mfv :
\bmu \StrictlyDominates \blam \text{ and } \mfu, \mfv \in \mathcal T(\bmu)\}$. 
Under the identification of the tensor product algebra with $R\sym_\alpha$,  the cell module $\Delta^{\blam}_R$ can be identified with 
$$
\spn\{ \mm \blam \mfs {\mft^\blam}  +  J^\blam\} ,
$$
where $\mfs$ varies over standard $\blam$--tableaux of the initial kind.

 \section{Wreath products}\label{Cellularity of wreath products}  
 Let $A$ be an $R$--algebra.  The symmetric group $\symn$ acts by algebra automorphisms on the tensor product algebra $A^{\otimes n}$, by place permutations,
 $$
 \leftexp{\pi}{\hskip - .1 em (a_1\otimes \cdots \otimes a_n)} = a_{\pi\inv(1)} \otimes \cdots \otimes a_{\pi\inv(n)}.
 $$
 The wreath product $A \wr \symn$ is the crossed product algebra $A^{\otimes n} \rtimes \symn$.  If $A$ is an algebra with involution $*$,  then $\symn$ acts by $*$--preserving automorphisms and the wreath product is also an algebra with involution determined by
 $$
 \begin{aligned}
 ((a_1\otimes \cdots \otimes a_n) \pi)^* &= \pi\inv \,(a_1^* \otimes \cdots \otimes a_n^*) \\
 &= \leftexp{\pi\inv}{\hskip - .2 em (a_1^* \otimes \cdots \otimes a_n^*) } \, \pi\inv.
 \end{aligned}
 $$
 An $R$--module $M$ is a module for the wreath product $A\wr\symn$ if and only if $M$ is both an
 $A^{\otimes n}$--module and a $\symn$--module, and the following covariance condition is satisfied:
 $$
 \pi (a m) = \leftexp{\pi} {\hskip -.1 em a} (\pi m),
 $$
 for $\pi \in \symn$,  $a \in A^{\otimes n}$,  and $m \in M$.  
 
 \subsection{Cellularity of wreath products} \label{subsection: cellularity theorem for wreath products}
 In this section, we will prove the following theorem:

\begin{theorem}  \label{theorem cellularity of wreath products}
Let $A$ be a cyclic cellular  algebra.  Then for all $n \ge 1$,  the wreath product algebra
$A \wr \sym_n$ is a cyclic cellular  algebra.  
\end{theorem}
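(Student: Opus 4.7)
The plan is to adapt the Dipper--James--Mathas Murphy-type construction (as used for cyclotomic Hecke algebras) to produce an explicit cyclic cellular basis of $A \wr \symn$. The cell datum will have $(\multipart, \GammaDominates)$ as its partially ordered set, and for each $\blam \in \multipart$ the index set $\mathcal T(\blam)$ will be the collection of standard $\blam$-tableaux.

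For $\blam \in \multipart$ with $\alpha = \alpha(\blam) = (\alpha_1, \ldots, \alpha_r)$ and $n_k = \alpha_1 + \cdots + \alpha_{k-1}$, I will place the cyclic generator $y_{\gamma^{(k)}}$ from Notation \ref{notation cyclic cellular} on each tensor position in $\{n_k + 1, \ldots, n_{k+1}\}$ to obtain
$$
Y_\blam = y_{\gamma^{(1)}}^{\otimes \alpha_1} \otimes \cdots \otimes y_{\gamma^{(r)}}^{\otimes \alpha_r} \in A^{\otimes n}.
$$
Setting $x_\blam = \prod_{k} x_{\blam(\gamma^{(k)})}$, the product of the row-stabilizer symmetrizers inside $R\sym_\alpha$ as in Section \ref{subsection the cellular structure of the group algebra of a Young subgroup}, I will define $y_\blam = Y_\blam \, x_\blam \in A \wr \symn$ and then
$$
m^\blam_{\mfs, \mft} = d(\mfs) \, y_\blam \, d(\mft)^*
$$
for standard $\blam$-tableaux $\mfs, \mft$.

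Verifying cellularity breaks into three steps. First, I will show that $\{m^\blam_{\mfs, \mft}\}$ is an $R$-basis: spanning follows by factoring an arbitrary $(a_1 \otimes \cdots \otimes a_n) \pi$ via a Mathas-type coset decomposition $\pi = \sigma_1 \sigma_2$ with $\sigma_2 \in \sym_\alpha$, expanding each $a_i$ in the cellular basis of $A$, and then invoking the Young-subgroup Murphy basis from Section \ref{subsection the cellular structure of the group algebra of a Young subgroup}; linear independence is a dimension count, reducing to the identity $\sum_\blam |\mathcal T(\blam)|^2 = n! \, (\rank_R A)^n$, a standard consequence of cellularity of $A$. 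Second, I will verify the multiplication rule
$$
a \, m^\blam_{\mfs, \mft} \equiv \sum_{\mfv} r^\mfs_\mfv(a) \, m^\blam_{\mfv, \mft} \pmod{\overline{(A\wr\symn)}^\blam}
$$
with coefficients independent of $\mft$, separately for $a \in \symn$ (by a Murphy-basis argument in $R\symn$) and for $a \in A^{\otimes n}$ (by conjugating $a$ through $d(\mfs)$ via place permutations and then expanding on each tensor position in the cellular basis of $A$). Third, the involution axiom $(m^\blam_{\mfs, \mft})^* \equiv m^\blam_{\mft, \mfs}$ modulo $\overline{(A\wr\symn)}^\blam$ follows from $y_\gamma^* \equiv y_\gamma \pmod{\bar A^\gamma}$, from $x_\blam^* = x_\blam$, and from the observation that $\sym_\blam$ fixes $Y_\blam$ under place permutation since all positions of type $\gamma$ carry the same factor $y_\gamma$. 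Cyclicity is then immediate: $\Delta^\blam$ is spanned by $\{m^\blam_{\mfs, \mft^\blam} + \overline{(A\wr\symn)}^\blam : \mfs \in \mathcal T(\blam)\}$ and each element equals $d(\mfs)(y_\blam + \overline{(A\wr\symn)}^\blam)$, so $y_\blam + \overline{(A\wr\symn)}^\blam$ is a cyclic generator.

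The main obstacle will be the multiplication rule. Two filtrations interact: the cellular filtration of $A$ applied to each of the $n$ tensor positions, and the Murphy-type filtration of $R\symn$. When a position's cellular index is raised from $\gamma$ to some $\mu > \gamma$ during expansion, the resulting term must be shown to lie in the $\GammaDominates$-ideal of $A\wr\symn$ indexed by multipartitions strictly $\Gamma$-dominating $\blam$, rather than merely in an $\dominates$-ideal; this is precisely the reason the cell datum uses $\Gamma$-dominance rather than ordinary dominance. Pinning down which $\bmu \StrictlyGammaDominates \blam$ arises in each case, and simultaneously verifying the $\mft$-independence of coefficients, is the technical heart of the argument and is where the DJM strategy has to be genuinely adapted rather than merely transcribed.
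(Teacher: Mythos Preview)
Your proposed cell datum is too small: with $\mathcal T(\blam)$ equal to just the set of standard $\blam$-tableaux, the family $\{m^\blam_{\mfs,\mft}\}$ has cardinality $\sum_{\blam} f_\blam^2 = r^n\, n!$, where $r = |\Gamma|$, whereas $\rank_R(A\wr\symn) = d^n\,n!$ with $d = \sum_i d_i^2 = \rank_R A$. These agree only when every cell module of $A$ has rank one. For a concrete failure, take $A = M_2(R)$ with its standard cellular structure ($|\Gamma|=1$, single cell module of rank $2$, hence cyclic cellular): already for $n=1$ your basis has one element while $A$ has rank four. Your claimed identity $\sum_\blam |\mathcal T(\blam)|^2 = n!\,(\rank_R A)^n$ is therefore false with your choice of $\mathcal T(\blam)$, and neither spanning nor linear independence can hold.

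The missing ingredient is precisely the data of a basis element in each tensor slot of the cell module $\Delta^{\gamma^{(k)}}$. The paper takes $\mathcal T(\blam)$ to be the set of pairs $(\mfs,v)$ with $\mfs$ a standard $\blam$-tableau and $v \in V^{\alpha(\blam)}$ (the set of simple tensors of the elements $v_t$ from Notation~\ref{notation cyclic cellular}), and defines
\[
m^\blam_{(\mfs,v),(\mft,w)} = d(\mfs)\, v\, y^\alpha x_\blam\, w^*\, d(\mft)^*.
\]
This gives $|\mathcal T(\blam)| = \bigl(\prod_i d_i^{\alpha_i}\bigr) f_\blam$, and the enumerative identity then holds. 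The rest of your outline (DJM-style straightening, separate treatment of $\symn$ and $A^{\otimes n}$ generators, the $\Gamma$-dominance subtlety you flagged) is the right shape, but it must be carried out with these enriched indices; in particular the $A^{\otimes n}$-multiplication step now genuinely moves $v$ around inside $V^\alpha$ modulo higher terms, which is where Lemma~\ref{lemma span of v y gamma} and the analysis in Lemma~\ref{lemma: y v x lambda in N bar lambda} enter.
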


In  Dipper, James, and Mathas ~\cite{DJM}, Section 3, it is shown that the cyclotomic 
Hecke algebras are strictly cellular.  This theorem contains as a special case the 
statement that the wreath product algebra $RC_r \wr \symn$ is strictly cellular, where 
$C_r$ denotes the cyclic group with $r$ elements, and $R$ is assumed to contain a 
primitive $r$--th root of unity.  In proving Theorem \ref{theorem cellularity of wreath 
products}, we adapt the proof of ~\cite{DJM}, Theorem 3.26.

 Let $A$ be a  cyclic cellular algebra, over an integral domain  $R$,  with cell datum
\break $(A, *, \Gamma, \ge, \mathcal T, \mathcal C)$.   For $\gamma \in \Gamma$, let
 $$
 \generator {} \gamma, \ 
V^\gamma  = \{v_j: j \in  \mathcal T(\gamma)\}, \ \  \text{and} \ \ y_\gamma 
 $$
 be as described in Notation \ref{notation cyclic cellular}.
  
 First we are going to define some elements of $A \wr  \sym_n$ which are analogues of the Murphy 
elements $m^\lambda_{\mfs, \mft}$ (in the Hecke algebra or in the symmetric group algebra) associated to a 
composition $\lambda$ and a pair of row standard $\lambda$--tableaux, see ~\cite{murphy-hecke95}, 
Section 4. 
 
  Let $r$  denote the cardinality of $\Gamma$, and let
 $ \Gamma = (\gamma\paren 1,\dots, \gamma\paren r)$ be a listing of $\Gamma$ such that
 $\gamma(i) \ge \gamma(j)$ implies $i \le j$.

   Given $\blam \in \multicomp$, let 
  $\alpha = \alpha(\blam)$.  
For $1 \le k \le r$, let $n_k = \alpha_1 + \dots + \alpha_{k-1}$.    
Let $V^\alpha$ be the 
set of simple tensors $w = w_1\otimes\cdots \otimes w_n$ in $A^{\otimes n}$ 
such that $w_i \in 
V^{\gamma\paren k}$ if $n_k + 1 \le i \le n_{k+1}$; that is, the first $\alpha_1$ 
tensorands belong to 
$V^{\gamma\paren 1}$,  the next $\alpha_2$ tensorands belong to  $V^{\gamma\paren 2}$, 
and so forth.  Note that 
  $\leftexp{\pi}{\hskip - .1 em (V^\alpha)} = V^\alpha$ for all $\pi \in \sym_\alpha$.   Set
$$
y^\alpha = y_{\gamma\paren 1}^{\otimes \alpha_1} \otimes \cdots  \otimes y_{\gamma\paren r}^{\otimes \alpha_r}. 
$$
Observe that $\leftexp{\pi}{\hskip -.1 em (y^\alpha)} = y^\alpha$ for $\pi \in \sym_\alpha$. 

\begin{remark}\label{remark on V alpha}
If we let $\bgamma(\alpha) \in \Gamma^n$ be the sequence whose first $\alpha_1$ entries are equal to $\gamma\paren 1$, whose next $\alpha_2$ entries are equal to $\gamma\paren 2$, and so forth, then in the notation of Section \ref{subsection: cellularity of tensor products},  $V^\alpha = V^{\bgamma(\alpha)}$ and $y^\alpha = y_{\bgamma(\alpha)}$.  
\end{remark}

Define
 $$
x_{\blam} = \sum_{w \in \sym_{\blam }} w \in R \symn.
$$
Note that $x_\blam^* = x_\blam$  and $\pi x_\blam = x_\blam \pi = x_\blam$ for 
$\pi \in \sym_\blam$. 
Since elements of $\sym_\alpha$ commute with $y^\alpha$,  and 
$\sym_\blam \subseteq \sym_\alpha$ 
we have $x_\blam y^\alpha = y^\alpha x_\blam$.

We consider the following elements of $A \wr \symn$,
$$
\m \blam  \mfs v \mft w =  d(\mfs) v \,y^\alpha x_{\blam} \, w^* d(\mft)^*,
$$
where $\blam \in \multicomp$, $\alpha = \alpha(\blam)$, $\mfs, \mft$ are row standard $\blam$--tableaux,
 and $v, w \in V^\alpha$.   We will also need to consider the elements
 $$
\m \blam  \mfs v \mft 1 =  d(\mfs) v \,y^\alpha x_{\blam} \,  d(\mft)^*,
$$
where $w \in V^\alpha$  has been replaced by the multiplicative identity.  In order to facilitate statements that apply to both types of elements, we write $V^\alpha_1 = V^\alpha \cup \{1\}$.

\ignore{
\begin{lemma}  \label{lemma involution on general murphy type elements}
Let $\blam \in \multicomp$, and $\alpha = \alpha(\blam)$.  For  $\mfs, \mft$  row standard $\blam$--tableaux,
 and $v, w \in V^\alpha$, 
$$(\m \blam \mfs v \mft w)^* = \m \blam \mft w \mfs v.$$
\end{lemma}
}

\ignore{
\begin{proof}  Since $y_\gamma^* = y_\gamma$ for all $\gamma \in \Gamma$, we also have 
$(y^\alpha)^* = y^\alpha$.  Moreover, $(x_\blam)^* = x_\blam$, and
$x_\blam$ and $y^\alpha$ commute.  The conclusion follows from these observations.
\end{proof}
}

\begin{lemma}  \label{lemma on action of symn on murphy type elements 1}
Let $\blam \in \multicomp$ and  $\alpha = \alpha(\blam)$.  Let $\mfs, \mft$ be row standard 
$\blam$--tableaux, and let $v \in V^\alpha$ and $w \in V^\alpha_1$.    For $\pi \in \sym_n$,
$$
\pi \, \m \blam \mfs v \mft w =  \m \blam {\mathfrak s'} {v'} \mft w,
$$
for some row standard $\blam$--tableau $\mathfrak s'$ and some $v' \in V^\alpha$.  
Similarly,  for $\mfs, \mft$ as above and $v, w \in V^\alpha$, 
$$
\m \blam \mfs v \mft w  \pi =  \m \blam {\mathfrak s} {v} {\mft'} {w'},
$$
for some row standard $\blam$--tableau $\mathfrak t'$ and some $w' \in V^\alpha$.
\end{lemma}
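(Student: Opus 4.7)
The plan is to unfold the definition
$\m \blam \mfs v \mft w = d(\mfs)\, v\, y^\alpha x_\blam\, w^*\, d(\mft)^*$
and push the extra permutation $\pi$ through this product, using the wreath product commutation relation $\pi a = \leftexp{\pi}{a}\, \pi$ valid for $\pi \in \symn$ and $a \in A^{\otimes n}$.  The argument relies on four facts already in the excerpt: (a) $\mft \mapsto d(\mft)$ identifies row standard $\blam$--tableaux with the distinguished left coset representatives of $\sym_\blam$ in $\symn$, so every element of $\symn$ factors uniquely as $d(\mft')\sigma$ with $\mft'$ row standard and $\sigma \in \sym_\blam$; (b) $\sym_\blam \subseteq \sym_\alpha$, and every $\sigma \in \sym_\alpha$ fixes $y^\alpha$ and permutes $V^\alpha$; (c) $\sigma x_\blam = x_\blam \sigma = x_\blam$ for $\sigma \in \sym_\blam$; and (d) $\symn$ acts on $A^{\otimes n}$ by $*$--preserving automorphisms, so that $\leftexp{\sigma}{(a^*)} = (\leftexp{\sigma}{a})^*$.

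For the first identity I use (a) to write $\pi d(\mfs) = d(\mfs')\sigma$ with $\sigma \in \sym_\blam$, and then rewrite using the wreath commutation rule together with (b) and (c):
\begin{align*}
\pi \, \m \blam \mfs v \mft w
&= d(\mfs')\, \sigma v\, y^\alpha x_\blam\, w^*\, d(\mft)^* \\
&= d(\mfs')\, (\leftexp{\sigma}{v})\, \sigma y^\alpha\, x_\blam w^*\, d(\mft)^* \\
&= d(\mfs')\, v'\, y^\alpha\, (\sigma x_\blam)\, w^*\, d(\mft)^* \\
&= d(\mfs')\, v'\, y^\alpha x_\blam\, w^*\, d(\mft)^* = \m \blam {\mfs'} {v'} \mft w,
\end{align*}
with $v' = \leftexp{\sigma}{v} \in V^\alpha$ by (b).  The second identity can be proved by the mirror computation---factoring $\pi^{-1}d(\mft) = d(\mft')\sigma$ with $\sigma \in \sym_\blam$, and pushing $\sigma^{-1}$ leftward through $w^*$ via (d) in the form $\sigma^{-1} w^* = (\leftexp{\sigma^{-1}}{w})^*\, \sigma^{-1}$, so that $w' = \leftexp{\sigma^{-1}}{w} \in V^\alpha$, and then absorbing $\sigma^{-1}$ on the right of $x_\blam$ by (c)---or, more efficiently, by applying the algebra involution $*$ to the first identity after observing that $(\m \blam \mfs v \mft w)^* = \m \blam \mft w \mfs v$; this last equality is immediate from $(y^\alpha)^* = y^\alpha$, $x_\blam^* = x_\blam$, and the commutativity of $x_\blam$ with $y^\alpha$.

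The main obstacle is bookkeeping: one must maintain a clean separation between the permutation factors and the $A^{\otimes n}$ factors throughout the rewriting, and must invoke the $*$--equivariance of the $\symn$--action in the right-multiplication case so that the involution appearing in $w^*$ is compatible with the closure of $V^\alpha$ under permutations in $\sym_\alpha$.  Once (a)--(d) are assembled, each identity reduces to a single pass of rewriting.
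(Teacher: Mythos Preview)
Your main argument---factoring $\pi d(\mfs) = d(\mfs')\sigma$ with $\sigma \in \sym_\blam$, then pushing $\sigma$ through $v$, $y^\alpha$, and $x_\blam$ using the commutation rule and the invariances (b), (c)---is exactly the paper's proof.  The paper likewise handles the right-multiplication statement ``by a similar computation,'' which is your mirror-computation option.

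One caution about your alternative involution shortcut: the exact equality $(y^\alpha)^* = y^\alpha$ is not available in this setup.  The elements $y_\gamma$ are arbitrary liftings of $\alpha_\gamma^{-1}(\delta^\gamma \otimes (\delta^\gamma)^*)$, and Lemma~\ref{lemma: equivalent conditions for cyclic cellular algebra}(2a) only guarantees $y_\gamma^* \equiv y_\gamma \bmod \bar A^\gamma$.  Consequently $(\m \blam \mfs v \mft w)^* = \m \blam \mft w \mfs v$ need not hold on the nose; the paper later establishes only the congruence $(\m \blam \mfs v \mft w)^* \equiv \m \blam \mft w \mfs v \bmod \overline N^\blam$ (Lemma~\ref{lemma involution on general murphy type elements}), and that congruence requires the ideal structure developed after the present lemma.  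So the involution route would be circular here; stick with the direct mirror computation, which is what both you (as your first option) and the paper do.
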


\begin{proof}  $\pi d(\mfs) = d(\mathfrak s') \pi'$,  where $\pi' \in \sym_{\blam}$ and
$d(\mathfrak s')$ is a distinguished left coset representative of $ \sym_{\blam}$ in $\sym_n$.  Also, 
$\pi' v y^\alpha x_\blam = \leftexp{\pi'}{\hskip -.3 em v} 
\,\leftexp{\pi'}{\hskip - .3 em y}{^\alpha} \,\pi' x_\blam = 
\leftexp{\pi'}{\hskip -.3 em v} \, y^\alpha \, x_\blam$, 
since $y^\alpha$ is fixed by 
$\pi' \in \sym_\blam$ and $\pi' x_\blam = x_\blam$.  

Thus
$$
\begin{aligned}
\pi \m \blam \mfs v \mft w &=  \pi d(\mfs) v y^\alpha x_\blam w^*  d(\mft)^*\\
&=  d(\mathfrak s') \pi' v y^\alpha x_\blam w^*  d(\mft)^*\\
&= d(\mathfrak s') v' y^\alpha x_\blam w^*  d(\mft)^* =  \m \blam {\mathfrak s'} {v'} \mft w,\\
\end{aligned}
$$
where $v' =  \leftexp{\pi'}{\hskip -.3 em v} $.  
The statement concerning multiplication by $\pi$ on the right follows by a similar computation.
\end{proof}

\begin{notation}  For $\blam \in \multipart$,  let $\mathcal T(\blam)$  denote the set of pairs
$(\mfs, v)$,  where $\mfs$ is a standard $\blam$--tableau and $v \in V^{\alpha(\blam)}$.  
\end{notation}

The statement and proof of the following result are adapted from ~\cite{DJM}, Proposition 3.18, which in turn
 generalizes a theorem of Murphy, ~\cite{murphy-hecke95}, Theorem 4.18.

\begin{proposition} \label{lemma on expansion of general murphy elements in standard elements}
 Let $\blam \in \multicomp$, and let $\alpha = \alpha(\blam)$.  Suppose that  $\mfs, \mft$ are row standard 
 $\blam$--tableaux and $v, w \in V^{\alpha}$.   Then $\m \blam \mfs v \mft w$ is a linear combination of 
 elements $\m \bmu \mfu {v'} \mfv {w'}$, where: 
\begin{enumerate}
\item
$\bmu \in \multipart$ with $\alpha(\bmu) = \alpha$, and $\bmu \GammaDominates \blam$. 
\item $(\mfu, v')$ and $(\mfv, w')$ are elements of $\mathcal T(\bmu)$.
\item $\mfu \GammaDominates \mfs$,  and  $\comp_\mfu = \comp_\mfs$. 
\item  $\mfv \GammaDominates \mft$ and $\comp_\mfv = \comp_\mft$.
\end{enumerate}
\end{proposition}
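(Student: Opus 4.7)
The plan is to adapt the Murphy-style straightening algorithm of Dipper--James--Mathas \cite[Prop.~3.18]{DJM} to the wreath-product setting. The key structural observation is that in
$$
\m \blam \mfs v \mft w = d(\mfs)\, v\, y^\alpha\, x_\blam\, w^*\, d(\mft)^*,
$$
the factor $y^\alpha$ is fixed by the conjugation action of $\sym_\alpha$, the set $V^\alpha$ is preserved by $\sym_\alpha$, and only $x_\blam\in R\sym_\alpha$ carries information about $\blam$ beyond the type $\alpha$.  Hence the problem reduces to a Murphy-basis expansion inside the Young-subgroup algebra $R\sym_\alpha$.

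First I would reduce to the case $\blam\in\multipart$. If some component $\blam(\gamma\paren k)$ is not a partition, a permutation $\pi\in\sym_{\alpha_k}\subseteq\sym_\alpha$ rearranges its rows into weakly decreasing order; since $\pi$ fixes $y^\alpha$ and satisfies $\leftexp{\pi}{V^\alpha}=V^\alpha$, I can absorb $\pi$ into $d(\mfs)$ and $d(\mft)$ via $\pi a = \leftexp{\pi}{a}\,\pi$, replacing $(\blam,\mfs,\mft,v,w)$ by $(\blam^+,\mfs',\mft',v',w')$ with $\blam^+\in\multipart$, $\alpha(\blam^+)=\alpha$, $\comp_{\mfs'}=\comp_\mfs$, $\comp_{\mft'}=\comp_\mft$, and $v',w'\in V^\alpha$.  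Next I write $d(\mfs)=d_0(\mfs)\,e(\mfs)$ with $e(\mfs)\in\sym_\alpha$ chosen so that $\mfs_0:=e(\mfs)\mft^\blam$ is the standard initial-kind $\blam$--tableau obtained by sorting the entries of each component of the initial-kind projection of $\mfs$, and $d_0(\mfs)$ is the corresponding distinguished left coset representative of $\sym_\alpha$ in $\symn$; similarly for $d(\mft)$.  Then $e(\mfs)\,x_\blam\,e(\mft)^*$ is the Murphy basis element $m^\blam_{\mfs_0,\mft_0}$ in the tensor-product Murphy basis of $R\sym_\alpha$ described in Section \ref{subsection the cellular structure of the group algebra of a Young subgroup}.

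Now I would apply the Murphy straightening inside $R\sym_\alpha$, expanding $m^\blam_{\mfs_0,\mft_0}$ as an $R$-linear combination of $m^\bmu_{\mfu_0,\mfv_0}$ with $\bmu\in\Lambda_\alpha$ satisfying $\bmu\dominates\blam$, and $\mfu_0,\mfv_0$ standard initial-kind $\bmu$--tableaux with $\mfu_0\dominates\mfs_0$ and $\mfv_0\dominates\mft_0$.  Since $y^\alpha$ commutes with every element of $R\sym_\alpha$ and $V^\alpha$ is $\sym_\alpha$-stable, I can reinsert $y^\alpha$ into each summand and push the outer factors $d_0(\mfs),v$ and $w^*,d_0(\mft)^*$ back in, invoking the calculation of Lemma \ref{lemma on action of symn on murphy type elements 1} to produce elements of the form $\m \bmu \mfu {v'} \mfv {w'}$ with $\mfu=d_0(\mfs)\mfu_0$ and $\mfv=d_0(\mft)\mfv_0$.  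By construction $\comp_\mfu=\comp_\mfs$ and $\comp_\mfv=\comp_\mft$, and $\mfu,\mfv$ inherit row standardness from the combination of $d_0$ (a distinguished coset representative) with the standard tableaux $\mfu_0,\mfv_0$.

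Finally, because $\alpha(\bmu)=\alpha(\blam)=\alpha$, Remark \ref{remark on dominance and gamma dominance} upgrades $\bmu\dominates\blam$ to $\bmu\GammaDominates\blam$, and, combined with the $\comp$ equalities, upgrades the componentwise dominance $\mfu_0\dominates\mfs_0$, $\mfv_0\dominates\mft_0$ to $\mfu\GammaDominates\mfs$ and $\mfv\GammaDominates\mft$.  The main obstacle I expect is the bookkeeping in the straightening step: one must verify carefully that when the outer coset representatives $d_0(\mfs),d_0(\mft)$ act on the initial-kind tableaux $\mfu_0,\mfv_0$ produced by the Murphy algorithm, the resulting $\mfu,\mfv$ are row standard, and that the elements $v',w'$ obtained when pushing $v,w$ through permutations in $\sym_\alpha$ remain in $V^\alpha$.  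The crucial ingredients for this tracking are the $\sym_\alpha$-stability of $V^\alpha$ and the hypothesis that the listing $(\gamma\paren 1,\dots,\gamma\paren r)$ of $\Gamma$ respects the partial order, ensuring that dominance on $\Lambda_\alpha$ matches $\Gamma$--dominance on $\multipart$.
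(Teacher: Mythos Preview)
Your approach is essentially the paper's own: factor $d(\mfs)=\sigma\,e(\mfs)$ with $\sigma$ a distinguished $\sym_\alpha$--coset representative and $e(\mfs)\in\sym_\alpha$, commute $y^\alpha$ and the $V^\alpha$--factors past elements of $\sym_\alpha$, apply Murphy's straightening to $e(\mfs)\,x_\blam\,e(\mft)^*$ inside $R\sym_\alpha$, and then reinsert $\sigma,\tau$.  The paper phrases the straightening componentwise (applying Murphy's theorem in each $R\sym_{\alpha_k}$) rather than once in the tensor-product basis of $R\sym_\alpha$, but this is the same computation.

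Two small points.  First, your preliminary reduction to $\blam\in\multipart$ is unnecessary: Murphy's theorem already takes an arbitrary composition $\lambda^{(k)}$ and row standard tableaux and outputs partitions with standard tableaux dominating the inputs, so the paper simply applies it directly.  (Your reduction is also not fully justified as written: after absorbing $\pi$, you would need to check that the resulting $\mfs'$ is row standard and that $\mfs'\GammaDominates\mfs$, which is extra work you can avoid.)  Second, the ``main obstacle'' you flag---that $\mfu=\sigma\,\mfu_0$ is standard and that $\mfu\dominates\mfs$ follows from $\mfu_0\dominates\mfs_0$---is exactly what the paper handles by citing \cite[Lemma~3.17(iii)]{DJM}; the standardness of $\sigma\,\mfu_0$ uses that $\sigma$ is order-preserving on each block $\{n_k+1,\dots,n_{k+1}\}$.
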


\begin{proof} Consider first the special case that $\mfs$  and $\mft$ are of the initial kind for $\blam$ 
(see Definition \ref{definition initial kind}).    Thus $d(\mfs), d(\mft) \in \sym_\alpha$,  and
\begin{equation} \label{murphy type lemma eqn 1}
\begin{aligned}
\m \blam \mfs v \mft w &= d(\mfs) v y^\alpha x_\blam w^*  d(\mft)^* \\
&=  (\leftexp {d(\mfs) }{\hskip - .1 em v}) \, y^\alpha  d(\mfs) x_\blam d(\mft)^* \,  (\leftexp {d(\mft) }{\hskip - .1 em w}^*).\\ 
\end{aligned}
\end{equation} 
Recall the notation  $n_1 = 0$, and   $n_k = \alpha_1 + \dots + \alpha_{k-1}$ for $k \ge 2$.  We can write 
$d(\mfs) x_\blam {d(\mft)}^*$ as the product of $r$ commuting factors, say  
$d(\mfs) x_\blam {d(\mft)}^* = x_1 \cdots x_r$, where  for each $k$,  $x_k$ is contained in the group algebra
of the symmetric group on the letters $\{n_k +1, \dots, n_{k+1}\}$.  Indeed, 
$x_k =  d(\mfs\power k)  x_{\lambda\power k} {d(\mft \power k)}^*$,  where $\lambda\power k$ is the $k$--th 
component of $\blam$,  $\mfs\power k$ the $k$--th component of $\mfs$, and $\mft \power k$ the $k$--th 
component of $\mft$.  
  
Applying the theorem of Murphy,  ~\cite{murphy-hecke95}, Theorem 4.18, to the symmetric group algebra on 
the letters  $\{n_k +1, \dots, n_{k+1}\}$,  we can write each $x_k$ as a linear combination of elements $x_k' 
= d(\mfu\power k) x_{\mu\power k} d(\mfv\power k)^*$,  where
$\mu\power k$ is a partition of size $\alpha_k$,  $\mfu\power k$ and $\mfv \power k$ are
standard $\mu\power k$--tableaux  (but  with entries in $\{n_k +1, \dots, n_{k+1}\}$), and
$\mfu \power k\dominates \mfs \power k$ and $\mfv  \power k \dominates \mft \power k$.

For any choice of $x_k' = d(\mfu\power k) x_{\mu\power k} {d(\mfv\power k)}^*$, for $1 \le k \le r$,  let 
$x' = x_1'x_2'\cdots x_r'$ and  let  $\bmu$ be the multipartition  whose $k$--th component is $\mu\power k$ 
for each k,  $\mfu$ the $\bmu$--tableau whose $k$--th component is $\mfu\power k$ for each $k$, and 
$\mfv$ the $\bmu$--tableau whose $k$--th component is $\mfv\power k$ for each $k$.
Then $x' = d(\mfu) x_\bmu {d(\mfv)}^*$. Since $\mfu$ and $\mfv$ are of the initial kind  (for $\blam$) and 
$\mfu \power k\dominates \mfs \power k$ and $\mfv  \power k \dominates \mft \power k$, we have $\mfu 
\dominates \mfs$ and $\mfv \dominates \mft$,  by Remark \ref{remark on dominance and gamma 
dominance}. (Equivalently, by the same remark, $\mfu \GammaDominates \mfs$ and $\mfv 
\GammaDominates \mft$.) It follows that $d(\mfs) x_\blam {d(\mft)}^*$ is a linear combination of terms 
$d(\mfu) x_\bmu {d(\mfv)}^*$,  where $\bmu$,  $\mfu$ and $\mfv$ are as in the statement of the lemma.  
Thus from Equation \ref{murphy  type lemma eqn 1}, we have that $\m \blam \mfs v \mft w$ is a linear 
combination of terms
\begin{equation} \label{murphy type lemma eqn 2}
\begin{aligned}
(\leftexp {d(\mfs) }{\hskip - .1 em v}) \, y^\alpha  d(\mfu) x_\bmu d(\mfv)^* \,  (\leftexp {d(\mft) }{\hskip - .1 em w}^*)  &= 
d(\mfu)\, (\leftexp {d(\mfu)^* d(\mfs) }{\hskip - .1 em v}) \, y^\alpha  x_\bmu   (\leftexp {d(\mfv)^*d(\mft) }{\hskip - .1 em w}^*) d(\mfv)^* \\
&= d(\mfu)\, v' \, y^\alpha  x_\bmu  (w')^* d(\mfv)^* \\
&= \m \bmu \mfu {v'} \mfv {w'}.
\end{aligned}
\end{equation}
This completes the proof in the special case that $\mfs$ and $\mft$ are of the initial kind for $\blam$.  

We pass to the general case, where the component functions $\comp_\mfs$ and $\comp_\mft$ are arbitrary.
We can write $\mfs = \sigma \mfs'$  and $\mft = \tau \mft'$, where $\mfs'$ and $\mft'$ are row standard 
$\blam$--tableaux of the initial kind, and $\sigma, \tau$ are
distinguished left coset representatives of $\sym_\alpha$ in $\symn$.  Furthermore, 
$d(\mfs) = \sigma d(\mfs')$ and $d(\mft) = \tau d(\mft')$.  It follows that
$$
\m \blam \mfs v \mft w  = \sigma \m \blam {\mfs'} v {\mft'} w  \tau^*.
$$

Applying the result for the special case above to $\m \blam {\mfs'} v {\mft'} w$, we have that
$\m \blam \mfs v \mft w$ is a linear combination of terms
\begin{equation}  \label{murphy type lemma eqn 3}
\sigma \m \bmu {\mfu'} {v'} {\mfv'} {w'} \tau^* = 
\sigma d(\mfu') v' y^\alpha x_\bmu (w')^* d(\mfv') \tau^*, 
\end{equation}
where:  $\bmu \in \multipart$,  $\alpha(\bmu) = \alpha$, and $\bmu \dominates \blam$;   $\mfu', \mfv'$  are 
standard $\bmu$--tableaux of the initial kind such that $\mfu' \dominates \mfs'$ and
$\mfv' \dominates \mft'$; and $v', w' \in V^\alpha$.  For each such term, let $\mfu = \sigma \mfu'$ and $\mfv 
= \tau \mfv'$.  Then $\mfu$ and $\mfv$ are standard $\bmu$--tableaux with $\comp_\mfu = \comp_\mfs$ and
$\comp_\mfv = \comp_\mft$.  We have $\sigma d(\mfu') = d(\mfu)$ and $\tau d(\mfv') = d(\mfv)$, so the term  
$\sigma \m \bmu {\mfu'} {v'} {\mfv'} {w'} \tau^*$   from Equation \ref{murphy type lemma eqn 3} is equal to 
$$
 d(\mfu) v' y^\alpha x_\bmu (w')^* d(\mfv) = \m \bmu {\mfu} {v'} {\mfv} {w'}.
$$
By ~\cite{DJM}, Lemma 3.17, part (iii), we have
$\mfu \dominates \mfs$ and $\mfv \dominates \mft$.   But since 
$\comp_\mfu = \comp_\mfs$ and
$\comp_\mfv = \comp_\mft$, this is equivalent to 
$\mfu \GammaDominates \mfs$ and
$\mfv \GammaDominates \mft$, using  Remark \ref{remark on dominance and gamma dominance} again.

Thus $\m \blam \mfs v \mft w$ is a linear combination of terms of the required type.
\end{proof}

\begin{corollary} \label{lemma  2 on expansion of general murphy elements in standard elements}
Let $\blam \in \multicomp$,  , and write $\alpha = \alpha(\blam)$.  Suppose that  $\mfs, \mft$ are row 
standard $\blam$--tableaux and $v \in V^{\alpha}$.   Then
 $\m \blam \mfs v \mft 1$ is a linear combination of elements $\m \bmu \mfu {v'} \mfv {1}$, where 
$\bmu$, $\mfu, \mfv$, and $v'$ are as in the statement of  Proposition 
\ref{lemma on expansion of general murphy elements in standard elements}.
\end{corollary}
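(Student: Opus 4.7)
The plan is to revisit the proof of Proposition~\ref{lemma on expansion of general murphy elements in standard elements} and check that the entire argument goes through unchanged when the right-hand factor $w$ is replaced by the identity. The only roles played by $w$ in that proof are (i) as a factor $w^*$ sitting immediately to the right of $x_\blam$ and (ii) as conjugates $\leftexp{\pi}{w^*}$ under elements $\pi \in \sym_\alpha$. Since $\sym_\alpha$ fixes $1$, starting from the identity in that slot leaves the identity in that slot at every step, so no spurious $w'$--factor can be introduced.

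Concretely, I would first handle the case in which $\mfs, \mft$ are of the initial kind for $\blam$, so that $d(\mfs), d(\mft) \in \sym_\alpha$ commute with $y^\alpha$. Then
$$
\m \blam \mfs v \mft 1 = (\leftexp{d(\mfs)}{v})\, y^\alpha\, d(\mfs) x_\blam d(\mft)^*.
$$
Factor $d(\mfs) x_\blam d(\mft)^*$ into $r$ commuting pieces $x_k = d(\mfs\power k) x_{\lambda\power k} d(\mft\power k)^*$ and apply Murphy's theorem to each to expand it as a linear combination of products $d(\mfu) x_\bmu d(\mfv)^*$ with $\bmu \in \multipart$, $\alpha(\bmu) = \alpha$, $\bmu \dominates \blam$, and $\mfu, \mfv$ standard $\bmu$--tableaux of the initial kind satisfying $\mfu \dominates \mfs$, $\mfv \dominates \mft$, exactly as in Proposition~\ref{lemma on expansion of general murphy elements in standard elements}. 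Each such summand contributes
$$
(\leftexp{d(\mfs)}{v})\, y^\alpha\, d(\mfu) x_\bmu d(\mfv)^* = d(\mfu)\, v'\, y^\alpha\, x_\bmu\, d(\mfv)^* = \m \bmu \mfu {v'} \mfv 1,
$$
where $v' = \leftexp{d(\mfu)^* d(\mfs)}{v}$ lies in $V^\alpha$ because $d(\mfu)^* d(\mfs) \in \sym_\alpha$.

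For the general case, write $\mfs = \sigma \mfs'$ and $\mft = \tau \mft'$ with $\mfs', \mft'$ of the initial kind and $\sigma, \tau$ distinguished left coset representatives of $\sym_\alpha$ in $\symn$, so that $\m \blam \mfs v \mft 1 = \sigma\, \m \blam {\mfs'} v {\mft'} 1\, \tau^*$. Apply the initial-kind case inside, then multiply each resulting $\m \bmu {\mfu'} {v'} {\mfv'} 1$ by $\sigma$ on the left and $\tau^*$ on the right; using $\sigma d(\mfu') = d(\sigma \mfu')$ and $\tau d(\mfv') = d(\tau \mfv')$, each term becomes $\m \bmu \mfu {v'} \mfv 1$ with $\mfu = \sigma \mfu'$ and $\mfv = \tau \mfv'$. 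The $\Gamma$--dominance and component-function conditions on $\bmu, \mfu, \mfv$ follow from ~\cite{DJM}, Lemma~3.17(iii), together with Remark~\ref{remark on dominance and gamma dominance}, precisely as in the parent proposition. There is no substantive obstacle: the whole point is that this is a true corollary, obtained by inspecting the earlier proof and observing that the right-hand factor $w$ and its descendants $w'$ never get mixed with the rest of the computation, so they can be taken to be $1$ throughout.
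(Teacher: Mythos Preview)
Your proposal is correct and follows exactly the approach of the paper: the paper's proof is the single sentence ``Trace through the proof of Proposition~\ref{lemma on expansion of general murphy elements in standard elements} with $w \in V^\alpha$ replaced by the multiplicative identity,'' and you have carried out precisely that trace. Nothing needs to be added or changed.
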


\begin{proof}  Trace through the proof of Proposition \ref{lemma on expansion of general murphy elements in standard elements} with $w \in V^\alpha$ replaced by the multiplicative identity.
\end{proof}

\begin{corollary} \label{corollary on action of symn on murphy type elements 2} 
 Let $\blam \in \multicomp$, and write $\alpha = \alpha(\blam)$. Suppose that $\mfs, \mft$ are row standard
$\blam$--tableaux, and $v, w \in V^\alpha$.    Let $\pi \in \symn$.
\begin{enumerate}
\item
$\pi \m \blam \mfs v \mft w$ is a linear combination of terms 
$\m \bmu \mfu {v'} \mfv {w'}$, where: 
\begin{enumerate}
\item
$\bmu \in \multipart$,  $\alpha(\bmu) = \alpha$, and $\bmu \GammaDominates \blam$.
\item $(\mfu, v')$ and $(\mfv, w')$ are elements of $\mathcal T(\bmu)$.
\item $\mfv \GammaDominates \mft$ and $\comp_\mfv = \comp_\mft$.
\end{enumerate} 
\item $ \m \blam \mfs v \mft w\, \pi$ is a linear combination of terms 
$\m \bmu \mfu {v'} \mfv {w'}$, where  $\bmu$, $(\mfu, v')$,  and $(\mfv, w')$ satisfy conditions  {\rm(a)} and {\rm(b)} of part  {\rm(1)}  as well as:
\begin{itemize}
\renewcommand{\labelitemi}{(c$'$)}
\item $\mfu \GammaDominates \mfs$ and $\comp_\mfu = \comp_\mfs$.
\end{itemize}

\item  For $\pi \in \symn$, 
$\pi \m \blam \mfs v \mft 1$ is a linear combination of terms 
$\m \bmu \mfu {v'} \mfv {1}$, where
$\bmu$, $\mfu, \mfv$, and $v'$ are as in part {\rm(1)}.
\end{enumerate} 
\end{corollary}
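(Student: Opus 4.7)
The plan is to reduce all three parts to the combination of two results already proved: Lemma \ref{lemma on action of symn on murphy type elements 1} on how $\symn$ acts on Murphy-type elements from either side, and Proposition \ref{lemma on expansion of general murphy elements in standard elements} (or Corollary \ref{lemma  2 on expansion of general murphy elements in standard elements} in the $w=1$ case) on expanding such an element in terms indexed by dominating multipartitions and tableaux. The point is that left multiplication by $\pi$ only alters the ``left half'' $(\mfs,v)$, and the expansion result then preserves whatever conditions are imposed on the ``right half'' $(\mft,w)$.

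For part (1), I would first invoke Lemma \ref{lemma on action of symn on murphy type elements 1} to write
\[
\pi\,\m\blam\mfs v\mft w \;=\; \m\blam{\mfs'}{v'}\mft w
\]
for some row standard $\blam$--tableau $\mfs'$ and some $v'\in V^\alpha$, with $\mft$ and $w$ unchanged. Then I apply Proposition \ref{lemma on expansion of general murphy elements in standard elements} to $\m\blam{\mfs'}{v'}\mft w$. The resulting linear combination has terms $\m\bmu\mfu{v''}\mfv{w''}$ satisfying conditions (1)--(4) of that proposition; conditions (1)--(2) give exactly (a) and (b) of the corollary, and condition (4) there (the conditions on $\mfv$ relative to $\mft$) gives condition (c). The conditions on $\mfu$ relative to $\mfs'$ are simply discarded, since they are not needed. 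Part (2) is symmetric: I use the right-multiplication case of Lemma \ref{lemma on action of symn on murphy type elements 1} to write $\m\blam\mfs v\mft w\,\pi = \m\blam\mfs v{\mft'}{w'}$, and then apply Proposition \ref{lemma on expansion of general murphy elements in standard elements} again, this time retaining condition (3) of that proposition (which becomes (c$'$)).

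For part (3), the same strategy works verbatim, but with $V^\alpha_1$ in place of $V^\alpha$ in the right-hand slot: Lemma \ref{lemma on action of symn on murphy type elements 1} explicitly allows $w\in V^\alpha_1$, so $\pi\,\m\blam\mfs v\mft 1 = \m\blam{\mfs'}{v'}\mft 1$, and then Corollary \ref{lemma  2 on expansion of general murphy elements in standard elements} provides the required expansion of $\m\blam{\mfs'}{v'}\mft 1$ in terms of elements of the form $\m\bmu\mfu{v''}\mfv 1$.

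There is no real obstacle here; the whole argument is bookkeeping, and the main thing to watch is that one applies the appropriate (left or right) version of Lemma \ref{lemma on action of symn on murphy type elements 1} before the expansion result, so that the condition one wishes to keep on $(\mft,w)$ (resp.\ $(\mfs,v)$) is not disturbed by absorbing $\pi$.
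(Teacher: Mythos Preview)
Your proposal is correct and matches the paper's own proof essentially verbatim: the paper simply states that parts (1) and (2) follow by combining Lemma \ref{lemma on action of symn on murphy type elements 1} with Proposition \ref{lemma on expansion of general murphy elements in standard elements}, and that part (3) additionally uses Corollary \ref{lemma  2 on expansion of general murphy elements in standard elements}. Your write-up actually supplies more detail than the paper does, correctly noting which conditions of Proposition \ref{lemma on expansion of general murphy elements in standard elements} are retained and which are discarded in each part.
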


\begin{proof}  Parts (1) and  (2) result from combining Lemma \ref{lemma on action of symn on 
murphy type elements 1} and Proposition \ref{lemma on expansion of general murphy 
elements in standard elements}.    Part (2) uses Corollary \ref{lemma  2 on expansion of 
general murphy elements in standard elements} as well. 
\end{proof}

We define certain $R$--submodules of the wreath product algebra $A \wr \symn$.  We will 
show  later that these are two sided ideals of $A \wr \symn$.

\vbox{
\begin{definition} \label{definition of standard ideals in wreath product algebra}
 Let $\blam \in \multicomp$.
\begin{enumerate}
\item  Let $N^\blam$ be the $R$--submodule of $A \wr \symn$ spanned by the set of 
$\m \bmu \mfs v \mft w$  where $\bmu \in \multipart$,  $\bmu \GammaDominates \blam$,
and $(\mfs,v),  (\mft, w) \in \mathcal T(\bmu)$.
\item Let $\overline{N}^\blam$ be the $R$--submodule of $A \wr \symn$ spanned by the set of 
$\m \bmu \mfs v \mft w$  where $\bmu \in \multipart$,  $\bmu \StrictlyGammaDominates \blam$,
and $(\mfs,v),  (\mft, w) \in \mathcal T(\bmu)$.
\end{enumerate}
\end{definition}
}

\begin{corollary}  \label{final lemma on action of symn on murphy type elements} 
Let $\blam \in \multipart$ and write $\alpha = \alpha(\blam)$.  Let $\mfs$ be a standard $\blam$--tableau and $v \in V^\alpha$.  Then for $\pi \in \symn$, 
$$
\pi \m  \blam \mfs v {\mft^\blam} 1 = x_1 + x_2, 
$$
where
$x_1$ is a linear combination of elements $\m \blam  \mfu {v'} {\mft^\blam} 1$, with
$(\mfu, v') \in \mathcal T(\blam)$, and $x_2 \in \overline N^\blam$.  
\end{corollary}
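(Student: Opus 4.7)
The plan is to apply part (3) of Corollary \ref{corollary on action of symn on murphy type elements 2} to $\pi \m \blam \mfs v {\mft^\blam} 1$, which expands it as a linear combination of terms $\m \bmu \mfu {v'} \mfv 1$ with $\bmu \in \multipart$, $\alpha(\bmu) = \alpha$, $\bmu \GammaDominates \blam$, $(\mfu, v'), (\mfv, 1) \in \mathcal T(\bmu)$, $\mfv \GammaDominates \mft^\blam$, and $\comp_\mfv = \comp_{\mft^\blam}$. I then split these terms into two groups according to whether $\bmu \StrictlyGammaDominates \blam$ or $\bmu = \blam$.

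All terms with $\bmu \StrictlyGammaDominates \blam$ lie in $\overline N^\blam$ by Definition \ref{definition of standard ideals in wreath product algebra}; these contribute to $x_2$. The main step is to show that every term with $\bmu = \blam$ automatically has $\mfv = \mft^\blam$, so that it has the form $\m \blam \mfu {v'} {\mft^\blam} 1$ required for $x_1$.

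To see this, suppose $\bmu = \blam$. Since $\comp_\mfv = \comp_{\mft^\blam}$, the tableau $\mfv$ is of the initial kind for $\blam$, so $\alpha([\mfv \downarrow m]) = \alpha([\mft^\blam \downarrow m])$ for each $m$. By Remark \ref{remark on dominance and gamma dominance}, the condition $\mfv \GammaDominates \mft^\blam$ is then equivalent to $\mfv\power k \dominates (\mft^\blam)\power k$ for every $k$. But $(\mft^\blam)\power k$ is the superstandard $\lambda\power k$--tableau (with entries in $\{n_k+1, \ldots, n_{k+1}\}$), which is the unique maximal element in dominance order among standard $\lambda\power k$--tableaux with those entries. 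Hence $\mfv\power k = (\mft^\blam)\power k$ for each $k$, so $\mfv = \mft^\blam$.

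The only genuinely non-formal ingredient is the maximality of the superstandard tableau in dominance order, which is a standard fact about tableaux; everything else is bookkeeping on top of the previous corollary. Thus collecting the $\bmu = \blam$ terms into $x_1$ and the $\bmu \StrictlyGammaDominates \blam$ terms into $x_2$ yields the desired decomposition.
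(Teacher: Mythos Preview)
Your proof is correct and follows essentially the same approach as the paper's own proof: apply part (3) of Corollary~\ref{corollary on action of symn on murphy type elements 2}, separate terms according to whether $\bmu = \blam$ or $\bmu \StrictlyGammaDominates \blam$, and for the $\bmu = \blam$ terms use Remark~\ref{remark on dominance and gamma dominance} together with the maximality of $\mft^\blam$ in dominance order to force $\mfv = \mft^\blam$. The only cosmetic difference is that you argue componentwise (each $\mfv\power k$ dominates the superstandard $\lambda\power k$--tableau, hence equals it), while the paper phrases the same step globally ($\mfv \dominates \mft^\blam$ and $\mft^\blam$ is the maximum standard $\blam$--tableau); these are equivalent via the same remark.
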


\begin{proof} It follows from Corollary \ref{corollary on action of symn on murphy type elements 2}, part (3),  that $\pi \m  \blam \mfs v {\mft^\blam} 1 = x_1 + x_2$  where $x_2 \in \overline N^\blam$,  and
$x_1$ is a linear combination of terms
$\m \blam  \mfu {v'} {\mfv} 1$ with $\mfu, \mfv$ standard $\blam$--tableaux, 
$\mfv \GammaDominates \mft^\blam$,  $\comp_\mfv = \comp_{\mft^\blam}$, and $v' \in V^\alpha$.    By Remark \ref{remark on dominance and gamma dominance}, we have $\mfv \dominates \mft^\blam$.  Since $\mft^\blam$
is maximum among standard $\blam$--tableaux with respect to dominance order,  
all the terms featuring in the expansion of $x_1$ have $\mfv = \mft^\blam$.  
\end{proof}

In the following discussion, recall  that $\bgamma(\alpha)$ denotes the sequence in $\Gamma^n$ whose first $\alpha_1$ entries are equal to $\gamma\paren 1$,  next  $\alpha_2$ entries are equal to $\gamma\paren 2$, and so forth.

\begin{lemma} \label{lemma: maximality of blame and maximality of gamma alpha}
Let $\blam \in \multipart$ and let $\alpha = \alpha(\blam)$.  If $\blam$ is maximal  in $\multipart$ with respect to $\Gamma$--dominance order $\GammaDominates$, then $\bgamma(\alpha)$ is maximal in $\Gamma^n$ with respect to the product order.
\end{lemma}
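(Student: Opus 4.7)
The plan is to prove the contrapositive: if $\bgamma(\alpha)$ fails to be maximal in $\Gamma^n$ with the product order, then $\blam$ fails to be maximal in $\multipart$ under $\GammaDominates$. Suppose therefore that some $\bgamma'' \in \Gamma^n$ is strictly larger than $\bgamma(\alpha)$ componentwise, and fix a position $p$ at which $\bgamma(\alpha)_p = \gamma\paren k$ while $\bgamma''_p = \gamma > \gamma\paren k$. Consistency of the listing with the partial order forces $\gamma = \gamma\paren j$ for some $j < k$; and since position $p$ lies in the $k$-th block we also have $\alpha_k \ge 1$, so $\lambda\power k$ is nonempty.

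I would then construct a rival multipartition $\bmu$ by transporting a single box from $\lambda\power k$ to $\lambda\power j$. Concretely, set $\mu\power l = \lambda\power l$ for $l \notin \{j, k\}$; let $\mu\power k$ be obtained from $\lambda\power k$ by deleting a removable corner (say the last box of the last nonempty row); and let $\mu\power j$ be obtained from $\lambda\power j$ by appending a box at the end of its first row. Both alterations produce partitions, so $\bmu \in \multipart$, and clearly $|\bmu| = n$ and $\bmu \ne \blam$.

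The substantive work is then to verify $\bmu \StrictlyGammaDominates \blam$. For each $\delta \in \Gamma$ and each $m \ge 0$ I would examine the defect
$$
D(\delta, m) = \Big(\sum_{\gamma' > \delta} |\bmu(\gamma')| + \sum_{i \le m} \bmu(\delta)_i\Big) - \Big(\sum_{\gamma' > \delta} |\blam(\gamma')| + \sum_{i \le m} \blam(\delta)_i\Big).
$$
Only the $j$-th and $k$-th components of $\bmu$ differ from those of $\blam$, so the size terms contribute exactly $[\gamma\paren j > \delta] - [\gamma\paren k > \delta]$, which is nonnegative by transitivity of $>$ applied to $\gamma\paren j > \gamma\paren k$. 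Meanwhile the row-sum term drops by at most $1$ (only when $\delta = \gamma\paren k$ and $m$ is at least the index of the removed row) and rises by $1$ when $\delta = \gamma\paren j$ and $m \ge 1$. A short case check yields $D(\delta, m) \ge 0$ throughout, with $D(\gamma\paren j, m) = 1$ for $m \ge 1$ providing the strict inequality. The only subtle case, and the main obstacle, is $\delta = \gamma\paren k$ with $m$ past the corner row: here the row-sum loss of $1$ must be shown to be exactly offset by the gain from $[\gamma\paren j > \gamma\paren k] = 1$, which precisely uses the transitivity hypothesis established above. This contradicts maximality of $\blam$ and completes the argument.
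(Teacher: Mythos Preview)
Your proof is correct and follows essentially the same approach as the paper's: both argue by contrapositive, showing that if some $\gamma\paren k$ with $\alpha_k \ge 1$ is not maximal in $\Gamma$, then moving a single box from $\lambda\power k$ to the component indexed by a strictly larger $\gamma\paren j$ produces a multipartition strictly $\Gamma$-dominating $\blam$. The only cosmetic differences are that the paper appends the moved box as a new row of $\lambda\power j$ rather than to the first row, and the paper asserts the $\Gamma$-dominance inequality without the explicit defect computation you carry out; your version is more thorough on this point.
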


\begin{proof} Suppose that $\blam$ is maximal in $\multipart$.   Then for all $\gamma \in \Gamma$, $\blam(\gamma) = \emptyset$ unless $\gamma$ is maximal in $\Gamma$.    For if $\gamma$ is not maximal, and $\blam(\gamma)$ is a non--empty partition,  then there exists a $\gamma' > \gamma$, and one can produce a $\blam' \StrictlyDominates \blam$ by removing a box from the $\gamma$ component of $\blam$ and appending it as a new row to the $\gamma'$ component.  This means that $\alpha_i = 0$ unless
$\gamma(i)$ is maximal, so only maximal $\gamma(i)$'s appear in the sequence $\bgamma(\alpha)$.  Thus
$\bgamma(\alpha)$ is maximal in the product order. 
\end{proof}

\begin{corollary} \label{corollary:  mult by a on left when blam is maximal}
Suppose $\blam \in \multipart$ is maximal with respect to $\GammaDominates$.  Let
$\alpha = \alpha(\blam)$.     Then for all $a \in A^{\otimes n}$,
$
a  y^\alpha
$
is a linear combination of elements $v' y^\alpha$ with $v' \in V^\alpha$.    Similarly, 
$y^\alpha  a$ is a linear combination of elements $y_\alpha (v')^*$ with $v' \in V^\alpha$. 
\end{corollary}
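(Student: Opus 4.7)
The plan is to reduce the problem to a tensor-factor-by-tensor-factor application of Lemma \ref{lemma span of v y gamma}, exploiting the maximality of $\blam$ via Lemma \ref{lemma: maximality of blame and maximality of gamma alpha}.

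First, by $R$--linearity it suffices to treat $a$ a simple tensor $a = a_1 \otimes \cdots \otimes a_n$ in $A^{\otimes n}$. Using Remark \ref{remark on V alpha}, write $y^\alpha = y_{\bgamma(\alpha)_1} \otimes \cdots \otimes y_{\bgamma(\alpha)_n}$, so that
$$
a \, y^\alpha  =  (a_1 y_{\bgamma(\alpha)_1}) \otimes \cdots \otimes (a_n y_{\bgamma(\alpha)_n}).
$$
The crucial observation is that since $\blam$ is maximal with respect to $\GammaDominates$, Lemma \ref{lemma: maximality of blame and maximality of gamma alpha} tells us that $\bgamma(\alpha)$ is maximal in $\Gamma^n$; hence every entry $\bgamma(\alpha)_i$ is maximal in $\Gamma$. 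For such a maximal $\gamma$, there are no $\mu \in \Gamma$ with $\mu > \gamma$, so the ideal $\bar A^\gamma$ of Definition \ref{weak cellularity} is zero.

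Consequently, Lemma \ref{lemma span of v y gamma}(1) applied to each factor gives the exact identity $A \, y_{\bgamma(\alpha)_i} = \spn\{v \, y_{\bgamma(\alpha)_i} : v \in V^{\bgamma(\alpha)_i}\}$, without any $\bar A^{\bgamma(\alpha)_i}$ error term. Thus each $a_i y_{\bgamma(\alpha)_i}$ is an $R$--linear combination of elements $v_i \, y_{\bgamma(\alpha)_i}$ with $v_i \in V^{\bgamma(\alpha)_i}$. Distributing tensor products, we conclude that $a \, y^\alpha$ is a linear combination of elements $(v_1 \otimes \cdots \otimes v_n) \, y^\alpha$ where each simple tensor $v_1 \otimes \cdots \otimes v_n$ lies in $V^\alpha$ by definition.

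For the second assertion I will use the involution. Because each $\bgamma(\alpha)_i$ is maximal in $\Gamma$ so that $\bar A^{\bgamma(\alpha)_i} = 0$, the congruence $y_\gamma^* \equiv y_\gamma \mod \bar A^\gamma$ becomes the equality $y_{\bgamma(\alpha)_i}^* = y_{\bgamma(\alpha)_i}$, giving $(y^\alpha)^* = y^\alpha$. Then
$$
(y^\alpha a)^* = a^* (y^\alpha)^* = a^* y^\alpha,
$$
which by the first part equals a linear combination $\sum r_{v'} v' y^\alpha$ with $v' \in V^\alpha$. Applying $*$ again and using $(v' y^\alpha)^* = (y^\alpha)^* (v')^* = y^\alpha (v')^*$ yields $y^\alpha a = \sum r_{v'} y^\alpha (v')^*$, as required. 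The only subtle point (and the one to be careful about) is verifying that the key ideal $\bar A^\gamma$ really vanishes for the $\gamma$'s appearing in $\bgamma(\alpha)$; everything else is a straightforward tensor-factor computation combined with the involution argument.
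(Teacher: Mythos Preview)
Your argument is correct and follows essentially the same approach as the paper: use Lemma~\ref{lemma: maximality of blame and maximality of gamma alpha} to conclude maximality of $\bgamma(\alpha)$, then invoke Lemma~\ref{lemma span of v y gamma} with the error term vanishing by maximality. The only cosmetic difference is that the paper applies Lemma~\ref{lemma span of v y gamma} once to the cyclic cellular algebra $A^{\otimes n}$ (using $y^\alpha = y_{\bgamma(\alpha)}$ and $V^\alpha = V^{\bgamma(\alpha)}$), whereas you apply it factor by factor in $A$; and for the second assertion the paper simply says ``similarly'' (implicitly via Lemma~\ref{lemma span of v y gamma}(2)), while you deduce it from the first via the involution and the exact identity $(y^\alpha)^* = y^\alpha$.
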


\begin{proof}  Since $A^{\otimes n}$ is cyclic cellular, for any $\bgamma \in \Gamma^n$,  
$$A^{\otimes n} y_\bgamma +  \overline{A^{\otimes n}}^{\,\bgamma} = \spn\{v y_\bgamma: v \in V^\bgamma\}  +  \overline{A^{\otimes n}}^{\,\bgamma}$$  
by Lemma \ref{lemma span of v y gamma}.    If $\bgamma$ is maximal in $\Gamma^n$,  then 
$$A^{\otimes n} y_\bgamma  = \spn\{v y_\bgamma: v \in V^\bgamma\}.$$  
  If $\blam$ is maximal,  then $\bgamma(\alpha)$ is maximal in $\Gamma^n$ by 
Lemma \ref{lemma: maximality of blame and maximality of gamma alpha}.    Moreover, 
$y^\alpha = y_{\bgamma(\alpha)}$ and $V^\alpha = V^{\bgamma(\alpha)}$ by Remark \ref{remark on V alpha}.  Thus,
$$a  y^\alpha \in \spn\{v y^\alpha: v \in V^\alpha\}.$$
   The statement regarding multiplication by $a$ on the right follows similarly.
\end{proof}

\begin{corollary} \label{corollary:  y alpha x lambda in ideal when lambda maximal}
Suppose $\blam \in \multipart$ is maximal with respect to $\GammaDominates$.  Let
$\alpha = \alpha(\blam)$. Then $y^\alpha x_\blam$ is in the span of $\{ v y^\alpha x_\blam w^* : v, w \in V^\alpha\}$. 
\end{corollary}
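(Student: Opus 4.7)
The plan is to apply both parts of Corollary \ref{corollary:  mult by a on left when blam is maximal} with $a = 1$, threading the central factor $x_\blam$ through via the commutation $x_\blam y^\alpha = y^\alpha x_\blam$.  That $x_\blam$ and $y^\alpha$ commute was observed earlier in this section, using $\sym_\blam \subseteq \sym_\alpha$ together with the fact that $\sym_\alpha$ fixes $y^\alpha$, so the main work is simply to assemble the pieces.

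First I apply the left-multiplication part of the previous corollary with $a = 1$.  Since $\blam$ is maximal in $\multipart$, this produces an expansion
$$ y^\alpha = \sum_{v \in V^\alpha} c_v\, v\, y^\alpha $$
for some scalars $c_v \in R$.  Right-multiplying by $x_\blam$ gives
$$ y^\alpha x_\blam = \sum_{v \in V^\alpha} c_v\, v\, y^\alpha x_\blam. $$

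Next I treat each summand $v\, y^\alpha x_\blam$.  The right-multiplication part of the same corollary, applied with $a = 1$, gives $y^\alpha = \sum_{v' \in V^\alpha} d_{v'}\, y^\alpha (v')^*$ for some $d_{v'} \in R$.  Using $y^\alpha x_\blam = x_\blam y^\alpha$ and substituting this expansion,
$$ v\, y^\alpha x_\blam = v\, x_\blam y^\alpha = v\, x_\blam \sum_{v' \in V^\alpha} d_{v'}\, y^\alpha (v')^* = \sum_{v' \in V^\alpha} d_{v'}\, v\, y^\alpha x_\blam (v')^*, $$
where the final equality uses $x_\blam y^\alpha = y^\alpha x_\blam$ once more.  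Combining the two expansions yields
$$ y^\alpha x_\blam = \sum_{v,\, v' \in V^\alpha} c_v\, d_{v'}\, v\, y^\alpha x_\blam (v')^*, $$
which exhibits $y^\alpha x_\blam$ in the required span.

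There is no serious obstacle: the argument is a purely formal combination of the two halves of the preceding corollary, with the commutation of $x_\blam$ and $y^\alpha$ ensuring that the left and right expansions act on independent sides of $x_\blam$ without interference.
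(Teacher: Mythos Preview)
Your proof is correct and follows essentially the same route as the paper's own argument. The paper cites Lemma~\ref{lemma span of v y gamma} directly (using maximality of $\bgamma(\alpha)$ to drop the $\overline{A^{\otimes n}}^{\,\bgamma(\alpha)}$ term) rather than the packaged Corollary~\ref{corollary:  mult by a on left when blam is maximal}, but the logical structure---expand $y^\alpha$ on the left, commute past $x_\blam$, then expand on the right---is identical.
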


\begin{proof}   Applying Lemma \ref{lemma span of v y gamma} with to the cyclic cellular algebra $A^{\otimes n}$,  and
$y^\alpha = y_{\bgamma(\alpha)}$ and taking into account that $\bgamma(\alpha)$ is maximal, we get
that $y^\alpha$ is a linear combination of elements $vy^\alpha$ with $v \in V^\alpha$.  Therefore
$y^\alpha x_\blam$ is a linear combination of elements $v y^\alpha x_\blam = v x_\blam y^\alpha$.  
Now apply Lemma \ref{lemma span of v y gamma} again on the right to express each of these elements as a linear combination of elements $v x_\blam y^\alpha w^*$  with $w \in V^\alpha$. 
\end{proof}

The following sets of elements of $A^{\otimes n}$  play an essential role at several points in our arguments.  

\begin{notation}  \label{notation Z alpha}
Let $\alpha$ be a composition of $n$ with $r$ parts.    For $\bgamma > \bgamma(\alpha)$, define 
$V^{\bgamma, \alpha}$ to be the set of simple tensors $v_1 \otimes \cdots \otimes v_n$, where
$$
\begin{aligned}
&v_j = 1  \  &\text{ if }  \ &\gamma_j = \bgamma(\alpha)_j \text{, and}\\
&v_j \in V^{\gamma_j} \  &\text{ if } \  &\gamma_j >   \bgamma(\alpha)_j .
\end{aligned}
$$
\ignore{
We denote by $Z^\alpha$ the span of simple tensors of the form 
$v y_\bgamma w^*$,  where $\bgamma > \bgamma(\alpha)$, $v \in V^\bgamma$, and $w \in V^{\bgamma, \alpha}$.   Let $Z^\alpha_*$ the span of simple tensors of the form 
$v y_\bgamma w^*$,  where $\bgamma > \bgamma(\alpha)$, $v \in V^{\bgamma, \alpha}$, and $w \in V^\bgamma$.
}
\end{notation}

\begin{lemma} \label{special form of cellularity for tensor product algebra}
Let $a  \in A^{\otimes n}$.  Let $\alpha$ be a composition of $n$ with $r$ parts.  Then:
\begin{enumerate}
\item
$a  y^\alpha = z_1 + z_2$,  where $z_1$ is a linear combination of terms of the form
$v' y^\alpha$ with $v' \in V^\alpha$, and $z_2$ is a linear combination of elements
$v y_\bgamma w^*$ with $\bgamma > \bgamma(\alpha)$, $v \in V^\bgamma$, and $w \in V^{\bgamma, \alpha}$.   
\item $  y^\alpha  a = z_1 + z_2$,  where $z_1$ is a linear combination of terms of the form
$y^\alpha (v')^*$ with $v' \in V^\alpha$, and $z_2 $ is a linear combination of elements
$v y_\bgamma w^*$ with $\bgamma > \bgamma(\alpha)$, $v \in V^{\bgamma, \alpha}$, and $w \in V^{\bgamma}$.   

\end{enumerate}
\end{lemma}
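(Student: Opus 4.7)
The plan is to reduce to a single tensor factor and then apply Lemma \ref{lemma span of v y gamma} componentwise. First, by $R$-linearity I may assume that $a = a_1 \otimes \cdots \otimes a_n$ is a simple tensor; writing $\bgamma(\alpha) = (\gamma_1, \dots, \gamma_n)$, Remark \ref{remark on V alpha} then gives $a y^\alpha = \bigotimes_{i=1}^n (a_i y_{\gamma_i})$, which reduces the problem to understanding each factor $a_i y_{\gamma_i}$ separately.

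For each $i$, Lemma \ref{lemma span of v y gamma}(1) gives a decomposition $a_i y_{\gamma_i} = m_i + z_i$, where $m_i$ is a linear combination of terms $v_i y_{\gamma_i}$ with $v_i \in V^{\gamma_i}$ and $z_i \in \bar A^{\gamma_i}$; further, using the equivalent cyclic cellular basis of $A$, I expand $z_i$ as a linear combination of elements $v_i y_{\mu_i} w_i^*$ with $\mu_i > \gamma_i$ and $v_i, w_i \in V^{\mu_i}$. Next I would expand the tensor product $\bigotimes_i (m_i + z_i)$ by distributivity into a sum indexed (in part) by the subset $I \subseteq \{1, \dots, n\}$ of positions at which the remainder $z_i$ is taken. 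At positions $i \notin I$ the $i$-th factor is $v_i y_{\gamma_i}$ (thought of as $v_i y_{\gamma_i} \cdot 1^*$), and at $i \in I$ the $i$-th factor is $v_i y_{\mu_i} w_i^*$ with $\mu_i > \gamma_i$. Each such term assembles into $v y_\bmu w^*$, with $\bmu = (\mu_1, \dots, \mu_n)$, $v = v_1 \otimes \cdots \otimes v_n$, and $w = w_1 \otimes \cdots \otimes w_n$.

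The case $I = \emptyset$ yields $\bmu = \bgamma(\alpha)$, $v \in V^\alpha$, $w = 1$, and so contributes a term of the form $v' y^\alpha$ with $v' \in V^\alpha$ to $z_1$. When $I \neq \emptyset$, we have $\bmu > \bgamma(\alpha)$ strictly. A position-by-position check shows $v_i \in V^{\mu_i}$ at every $i$, whence $v \in V^\bmu$; moreover $w_i = 1$ at positions where $\mu_i = \bgamma(\alpha)_i$ and $w_i \in V^{\mu_i}$ at positions where $\mu_i > \bgamma(\alpha)_i$, which is exactly the defining condition for $w \in V^{\bmu, \alpha}$ in Notation \ref{notation Z alpha}. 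So this term goes into $z_2$. Part (2) proceeds identically, using Lemma \ref{lemma span of v y gamma}(2) to decompose each $y_{\gamma_i} a_i$; the roles of $v$ and $w$ are then swapped, giving $v \in V^{\bmu, \alpha}$ and $w \in V^\bmu$ in the $I \neq \emptyset$ terms.

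The main obstacle---really the key observation---is that the refined constraint $w \in V^{\bmu, \alpha}$ (respectively $v \in V^{\bmu, \alpha}$) cannot be read off from a direct application of Lemma \ref{lemma span of v y gamma} to $A^{\otimes n}$ regarded as a single cyclic cellular algebra, which would only give the weaker $w \in V^\bmu$. The refinement is essentially what the lemma is buying: it arises naturally from the factorwise expansion because at each unperturbed position the ``main'' factor $v_i y_{\gamma_i}$ carries no right-hand (respectively left-hand) factor, i.e.\ $w_i = 1$ (respectively $v_i = 1$) there.
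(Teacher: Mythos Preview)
Your proof is correct and follows essentially the same approach as the paper's: reduce to a simple tensor, decompose each factor $a_i y_{\gamma_i}$ via Lemma~\ref{lemma span of v y gamma} into a main part plus an element of $\bar A^{\gamma_i}$ (expanded in the equivalent cyclic cellular basis as $v_i y_{\mu_i} w_i^*$ with $\mu_i > \gamma_i$), and then distribute. Your bookkeeping with the subset $I$ and your explicit identification of why $w \in V^{\bmu,\alpha}$ (because $w_i = 1$ at unperturbed positions) simply makes explicit what the paper's terse ``Statement (1) follows'' leaves implicit.
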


\begin{proof}  We can suppose without loss of generality that  $a = a_1 \otimes \cdots \otimes a_n$ is a simple tensor.
Then
$$
a  y^\alpha   = a_1  y_1 \otimes \cdots \otimes a_n y_n,
$$
where  $y_j = y_{\gamma(\alpha)_j}$.
By Lemma \ref{lemma span of v y gamma},  for  each $j$  we have $a_j  y_j = b_1 + b_2$,  where $b_1$ is a linear combination of terms $v'_j y_j$ with $v'_j \in V^{\gamma(\alpha)_j}$,  and 
$b_2$ is a linear combination of terms $v_j y_{\gamma} (w_j )^*$,  where $\gamma> \gamma(\alpha)_j$, and $v_j , w_j  \in V^{\gamma}$.   Statement (1) follows.  The proof of statement (2) is similar.
\end{proof}

\begin{lemma} \label{lemma: y v x lambda in N bar lambda}
Let $\blam \in \multipart$ and let $\alpha = \alpha(\blam)$.   Suppose that for all
$\bmu \StrictlyGammaDominates  \blam$ in $\multipart$,  $N^\bmu$ is a two sided ideal of $A \wr \symn$,  and $y^{\alpha(\bmu)} x_\mu \in N^\mu$.     
Suppose that
$\bgamma  > \bgamma(\alpha)$ in $\Gamma^n$ and 
$v \in V^{\bgamma, \alpha}$.  Then
$y_{\bgamma} v^*  x_\blam \in \overline N^\blam$  and  $x_\blam v y_\bgamma \in \overline N^\blam$.  
\end{lemma}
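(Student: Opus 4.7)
The plan is to exhibit a multipartition $\bmu \in \multipart$ with $\bmu \StrictlyGammaDominates \blam$ such that $y_\bgamma v^* x_\blam \in N^\bmu$; because $N^\bmu \subseteq \overline N^\blam$, this will yield the first assertion. Let $\beta \in C_n$ be the composition with $\beta_k = |\{j : \gamma_j = \gamma(k)\}|$, and define $\bmu$ by $\bmu(\gamma(k)) = (\beta_k)$ (the single-row partition of $\beta_k$); thus $\alpha(\bmu) = \beta$. To see that $\bmu \StrictlyGammaDominates \blam$, I would observe that for each $\gamma \in \Gamma$,
\[
\sum_{\gamma' \ge \gamma} |\bmu(\gamma')| = |\{j : \gamma_j \ge \gamma\}| \ge |\{j : \bgamma(\alpha)_j \ge \gamma\}| = \sum_{\gamma' \ge \gamma} |\blam(\gamma')|,
\]
using $\gamma_j \ge \bgamma(\alpha)_j$ pointwise; the strict inequality at $\gamma = \gamma_{j_0}$, where $j_0$ is any position with $\gamma_{j_0} > \bgamma(\alpha)_{j_0}$, yields strict $\Gamma$-dominance. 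Since each $\bmu(\gamma)$ has only one part, the full $\Gamma$-dominance condition reduces to these partial-sum inequalities.

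Next, by the hypothesis of the lemma, $N^\bmu$ is a two-sided ideal of $A \wr \symn$ containing $y^\beta x_\bmu$. The plan is to choose $\sigma \in \symn$ with $\leftexp{\sigma}{y^\beta} = y_\bgamma$; such $\sigma$ exists because $y^\beta$ and $y_\bgamma$ are simple tensors sharing the same multiset of factors. In the wreath product, $\sigma y^\beta = y_\bgamma \sigma$, and so
\[
\sigma (y^\beta x_\bmu) = y_\bgamma (\sigma x_\bmu) \in N^\bmu.
\]
Since $N^\bmu$ is a two-sided ideal, any product $p \cdot \sigma (y^\beta x_\bmu) \cdot q$ with $p, q \in A \wr \symn$ remains in $N^\bmu$. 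The core of the proof is then to realize $y_\bgamma v^* x_\blam$ as a linear combination of such products, which I would accomplish via a double-coset decomposition of $\symn$ relative to $\sym_\blam$ and $\sigma \sym_\bmu \sigma\inv$, exploiting the $\sym_\bmu$-invariance of $y^\beta$ that allows $x_\bmu$ to be slid across $y^\beta$.

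The principal obstacle will be controlling the interaction between $v^*$ and $x_\bmu$. By the definition of $V^{\bgamma, \alpha}$, the tensor factors of $v$ at positions $j$ with $\gamma_j = \bgamma(\alpha)_j$ equal $1$ and are trivially invariant under any place permutation, but the factors at positions $j$ with $\gamma_j > \bgamma(\alpha)_j$ lie in $V^{\gamma_j}$ and can be arbitrary, so they need not be $\sigma \sym_\bmu \sigma\inv$-invariant. Routing these possibly non-invariant contributions through the computation and absorbing them into the right multipliers is the principal technical step.

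For the second assertion, I would apply the algebra involution to the first: $(y_\bgamma v^* x_\blam)^* = x_\blam v y_\bgamma^*$, using $x_\blam^* = x_\blam$. Since $y_\gamma^* \equiv y_\gamma \bmod \bar A^\gamma$ for each $\gamma$, we have $y_\bgamma^* \equiv y_\bgamma$ modulo $\overline{(A^{\otimes n})}^\bgamma$, and the higher-shape correction terms yield elements of $\overline N^\blam$ by applying the main argument to each $\bgamma' > \bgamma$. Since each $N^\bmu$ is $*$-invariant by the symmetry of its definition, so is $\overline N^\blam$, and the conclusion follows.
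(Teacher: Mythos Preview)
Your proposal has a genuine gap at precisely the point you flag as ``the principal technical step.''  With your choice of $\bmu$ (single-row components determined only by the multiset of $\gamma_j$'s), the conjugated symmetrizer $\sigma x_\bmu \sigma^{-1}$ is the symmetrizer over the group $\sym_{P'}$ stabilizing the level sets of $j\mapsto\gamma_j$.  But $\sym_{P'}$ neither contains nor is contained in $\sym_\blam$ in general (a single row of $\mft^\blam$ can meet several level sets of $\bgamma$), and $v^*$ is not $\sym_{P'}$-invariant because different positions $j$ within one level set can carry different tensor factors $v_j$.  So there is no evident way to write $y_\bgamma v^* x_\blam$ as a two-sided multiple of $y_\bgamma x_{P'}$, and the vague phrase ``absorbing them into the right multipliers'' does not indicate a mechanism.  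The paper's proof avoids this obstacle by a different and sharper choice: it partitions $\{1,\dots,n\}$ into blocks $P_{k,i}$ and $P_{k,i,\ell,t}$ that simultaneously refine the rows of $\mft^\blam$, the level sets of $\bgamma$, \emph{and} the level sets of $v$.  On each block both $y_{\gamma_j}$ and $v_j$ are constant, so the corresponding symmetrizer $x_P$ commutes with $v^*$; and since $\sym_P\subseteq\sym_\blam$ one has $x_\blam=x_PS$.  This yields $y_\bgamma v^* x_\blam=(y_\bgamma x_P)v^*S$, and $y_\bgamma x_P$ is then identified (up to conjugation) with $y^{\alpha(\bnu)}x_\bnu$ for a suitable $\bnu\StrictlyGammaDominates\blam$ built from the block sizes.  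The refinement by the values of $v$ is the missing idea in your plan.

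Your argument for the second assertion via the involution is also problematic: you invoke $*$-invariance of $N^\bmu$, but at this point in the development that is not available.  The congruence $(m^\bmu_{(\mfs,v),(\mft,w)})^*\equiv m^\bmu_{(\mft,w),(\mfs,v)}$ is only established modulo $\overline N^\bmu$ in Lemma~\ref{lemma involution on general murphy type elements}, whose proof \emph{uses} the present lemma; so appealing to it here is circular.  The paper instead proves $x_\blam v y_\bgamma\in\overline N^\blam$ by a direct parallel argument (the partition $P$ is symmetric in its treatment of left and right).
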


\begin{proof}  As usual,  write $n_k = \alpha_1 + \cdots + \alpha_{k-1}$ for $1 \le k \le n$.  Write $\bgamma = (\gamma_1, \dots, \gamma_n) $.
 Since $\bgamma > \bgamma(\alpha)$,   
for each $k$ and for $n_k + 1 \le j \le n_{k+1}$,  we have $\gamma_j \ge \gamma(k)$,  with strict inequality for at least one value of $k$ and $j$.  

We partition the set $\{1, 2, \dots, n\}$ into subsets, an integer $j$ being assigned to a subset according to the row of $j$ in $\mft^\blam$ and the value of $\gamma_j$ and $v_j$, as follows:
For integers $k$ and $i$,  let $P_{k, i}$  be the set of integers $j$ such that 
$j$ is in the $i$--th row of the $k$--the component of $\mft^\blam$  and $\gamma_j = \gamma(k)$, so 
$v_j = 1$.
For integers $k$, $i$ and $\ell$,  with $\gamma(\ell) > \gamma(k)$, 
and for $t \in \mathcal T(\gamma\paren\ell)$,  let $P_{k, i, \ell, t}$ be the set of integers $j$ such that
$j$ is in the $i$--th row of the $k$--the component of $\mft^\blam$,  $\gamma_j = \gamma(\ell)$, and
$v_j = v_t$.    
  Then $P= \{P_{k, i}\} \cup  \{P_{k, i, \ell, s}\}$ is a set partition of $\{1, 2, \dots, n\}$.   Moreover,  since $\bgamma > \bgamma(\alpha)$,  it follows that there is at least one index $(k, i, \ell, t)$ with $P_{k, i, \ell, t} \ne \emptyset$.  
Note that $y_{\gamma_j}$ and $  v_j$  are constant for $j$ in each block of the set partition $P$, namely
  $y_{\gamma_j} = y_{\gamma(k)}$ and $v_j = 1$   for $j \in P_{k, i}$;    and
  $y_{\gamma_j} = y_{\gamma(\ell)}$ and  $ v_j = v_t$ for
  $j \in P_{k, i, \ell, t}$.  

Let $\sym_P$ be the set of $w \in \symn$ such that $w(X) = X$ for each block $X \in P$.
Then $\sym_P$ is a subgroup of  $\sym_\blam$.   Let $x_P$ denote the corresponding symmetrizer, 
$x_P = \sum_{w \in \sym_P}  w$.  We have $x_\blam =  x_P S$,  where $S$ is the sum of the elements in a complete family of right coset representatives of $\sym_P$ in $\sym_\blam$.

Note that $x_P$ commutes with $v^*$, so that
$$
 y_{\bgamma} v^*  x_\blam = 
 y_{\bgamma} v^*  x_P S =
 (y_{\bgamma} x_P)  v^* S.
$$
We will now show that $y_\bgamma x_P \in N^\bnu$ for some $\bnu \StrictlyGammaDominates \blam$.

Let $\bnu$  be the multipartition such that for each $\ell$,  $\nu\power \ell$ has parts 
$\{|P_{\ell, i}|\}_{i} \cup \{|P_{k, i, \ell, t}|\}_{k, i, t}$,  sorted into decreasing order.  
Then  $\bnu \StrictlyGammaDominates \blam$, because $\bnu$ can be obtained from $\blam$ by applying ``raising operators".    In fact, $\bnu$ can be obtained from $\blam$ in two stages.  First,
for each pair $k, \ell$ with $\gamma\paren\ell > \gamma\paren k$, and for each $i$ and $t$
such that  $P_{k, i, \ell, t}$ is non-empty,  move $|P_{k, i, \ell, t}|$ boxes from the $i$--th row of $\lambda\power k$ to form a new row appended to $\lambda\power \ell$.  
Then sort the rows of each component of the resulting multicomposition into decreasing order; this involves raising boxes from lower rows to higher rows in each component.  Note that 
$\bnu \ne \blam$ because some $P_{k, i, \ell, t}$ is non-empty. 

There exists $\pi \in \symn$ such that
$y_\bgamma x_P  = \pi y^{\alpha(\bnu)} x_\bnu  \pi\inv$.    In fact, $\pi$ can be chosen so that $\pi\inv$ sends the elements of each non-empty $P_{k,i}$ to the entries in some row of the $k$--th component of $t^\bnu$ and the elements of each non-empty $P_{k,i,\ell,t}$ to the entries in some row of the $\ell$--th component of $t^\bnu$.  
Since $N^\bnu$ is assumed to be a two sided ideal of $A\wr \symn$, and $y^{\alpha(\bnu)} x_\bnu$ is assumed to be an element of $N^\bnu$,   it follows that
 $y_\bgamma x_P  \in N^\bnu$, and therefore
$  y_{\bgamma} v^*  x_\blam =
 (y_{\bgamma} x_P)  v^* S \in N^\bnu$.  But
$N^\bnu \subset \overline N^\blam$, since $\bnu \StrictlyGammaDominates \blam$.     This completes the proof that $y_{\bgamma} v^*  x_\blam \in \overline N^\blam$.   The proof that  $x_\blam v y_\bgamma \in \overline N^\blam$ is similar.
 \end{proof}

\begin{proposition} \label{proposition on multiplication of murphy type elements by A}
Let $\blam \in \multipart$, and let $\alpha = \alpha(\blam)$.  Let $\mfs, \mft$ be standard $\blam$--tableaux, and let $a \in A^{\otimes n}$.
\begin{enumerate}
\item  For  $v \in V^{\alpha}$, and $w \in V^\alpha_1$,  
$a \, \m \blam \mfs v \mft w = x_1 + x_2$,
where 
$x_1$  is a  linear combination of terms
$\m \blam {\mfs} {v'} {\mft} {w}$, with 
$v' \in V^{\alpha}$, and
$x_2 \in \overline{N}^\blam$.
\item  For  $v, w \in V^{\alpha}$, $  \m \blam \mfs v \mft w\, a = x_1 + x_2$,
where 
$x_1$  is a  linear combination of terms
$\m \blam {\mfs} {v} {\mft} {w'}$, with 
$w' \in V^{\alpha}$, and
$x_2 \in \overline{N}^\blam$.
  \item $N^\blam$ and $\overline{N}^\blam$  are two sided ideals of $A\wr \symn$. 
  \item  $y^\alpha x_\blam \in N^\blam$.  
\end{enumerate}
\end{proposition}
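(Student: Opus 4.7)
The plan is to prove all four statements simultaneously by downward induction on $\blam$ in the $\Gamma$--dominance order $\GammaDominates$ on $\multipart$.  In the base case, when $\blam$ is maximal, $\overline N^\blam = \{0\}$, and (4) follows immediately from Corollary \ref{corollary:  y alpha x lambda in ideal when lambda maximal}: that result expresses $y^\alpha x_\blam$ as a linear combination of elements $v\,y^\alpha x_\blam w^* = \m\blam{\mft^\blam} v {\mft^\blam} w$ with $v,w \in V^\alpha$, all of which lie in $N^\blam$.  For (1) and (2), I would push $a \in A^{\otimes n}$ through $d(\mfs)$ (respectively $d(\mft)^*$) via the wreath commutation $\pi b = \leftexp{\pi}{b}\,\pi$ and absorb the resulting element of $A^{\otimes n}$ into the $y^\alpha$--factor using Corollary \ref{corollary:  mult by a on left when blam is maximal}; since $\overline N^\blam = \{0\}$, the error term $x_2$ vanishes.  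Part (3) is then immediate, since $A^{\otimes n}$--invariance of $N^\blam$ on both sides comes from (1), (2), while $\symn$--invariance is Corollary \ref{corollary on action of symn on murphy type elements 2}, and $A\wr\symn$ is generated by $A^{\otimes n}$ together with $\symn$.

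For the inductive step, assume the four statements for every $\bmu \StrictlyGammaDominates \blam$.  Then each $N^\bmu$ is a two--sided ideal and $y^{\alpha(\bmu)} x_\bmu \in N^\bmu$, so $\overline N^\blam = \sum_{\bmu \StrictlyGammaDominates \blam} N^\bmu$ is a two--sided ideal, and the hypotheses of Lemma \ref{lemma: y v x lambda in N bar lambda} are met; this is the key input.  To prove (1), rewrite $a\,\m\blam\mfs v\mft w = d(\mfs)\,b\,y^\alpha x_\blam w^* d(\mft)^*$ with $b = (\leftexp{d(\mfs)^*}{a})\,v \in A^{\otimes n}$, and apply Lemma \ref{special form of cellularity for tensor product algebra}(1) to expand $b\,y^\alpha = z_1 + z_2$.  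The $z_1$--piece produces the $x_1$ term, a linear combination of elements $\m\blam\mfs{v'}\mft w$.  Each summand of $z_2$ has the form $p\,y_\bgamma q^*$ with $\bgamma > \bgamma(\alpha)$ and $q \in V^{\bgamma,\alpha}$, hence $y_\bgamma q^* x_\blam \in \overline N^\blam$ by Lemma \ref{lemma: y v x lambda in N bar lambda}, and the outer multiplication preserves $\overline N^\blam$ by the ideal property.  Statement (2) is the symmetric argument using Lemma \ref{special form of cellularity for tensor product algebra}(2) and the second conclusion of Lemma \ref{lemma: y v x lambda in N bar lambda}.  Statement (3) once more follows from (1), (2), and Corollary \ref{corollary on action of symn on murphy type elements 2}.

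Statement (4) is the subtlest.  Apply Lemma \ref{special form of cellularity for tensor product algebra}(1) with $a = 1$ to decompose $y^\alpha = z_1 + z_2$, so that $y^\alpha x_\blam = \sum r_i v_i'\, y^\alpha x_\blam + z_2 x_\blam$ with $v_i' \in V^\alpha$ and $z_2 x_\blam \in \overline N^\blam$ as above.  For each $v_i' y^\alpha x_\blam$, use the commutation $y^\alpha x_\blam = x_\blam y^\alpha$ and apply Lemma \ref{special form of cellularity for tensor product algebra}(2) with $a = 1$ to the trailing $y^\alpha$: the good part gives $\sum s_j v_i' x_\blam y^\alpha (v_j')^* = \sum s_j v_i' y^\alpha x_\blam (v_j')^* = \sum s_j \m\blam{\mft^\blam}{v_i'}{\mft^\blam}{v_j'} \in N^\blam$, while the error is absorbed into $\overline N^\blam$ by the other half of Lemma \ref{lemma: y v x lambda in N bar lambda}.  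Summing yields $y^\alpha x_\blam \in N^\blam$.

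The main obstacle is the mutual dependency of the four parts: Lemma \ref{lemma: y v x lambda in N bar lambda}, which is the only mechanism for sweeping the ``bad'' $z_2$--terms into $\overline N^\blam$, requires both the ideal property (statement (3)) \emph{and} the containment $y^{\alpha(\bmu)} x_\bmu \in N^\bmu$ (statement (4)) for all dominating $\bmu$, which forces a single simultaneous induction carrying all four statements together.  A secondary subtlety is that $x_\blam$ does not commute with a generic element of $A^{\otimes n}$, although it does commute with $y^\alpha$; the manipulations above are arranged so that the $A^{\otimes n}$--factors to be moved always appear adjacent to $y^\alpha$, precisely where Lemma \ref{special form of cellularity for tensor product algebra} can be applied.
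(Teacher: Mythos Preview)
Your proposal is correct and follows essentially the same approach as the paper's own proof: a simultaneous induction on the $\Gamma$--dominance order carrying all four statements, with the base case (maximal $\blam$) handled by Corollaries \ref{corollary:  mult by a on left when blam is maximal} and \ref{corollary:  y alpha x lambda in ideal when lambda maximal}, and the inductive step driven by Lemma \ref{special form of cellularity for tensor product algebra} together with Lemma \ref{lemma: y v x lambda in N bar lambda}. Your treatment of part (4), including the commutation $y^\alpha x_\blam = x_\blam y^\alpha$ followed by a second application of Lemma \ref{special form of cellularity for tensor product algebra} on the right, matches the paper's argument exactly, and your closing remarks about the mutual dependency of the four parts accurately capture why the simultaneous induction is forced.
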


\begin{proof}  We prove all  parts together, by induction on the partial order $\GammaDominates$ of $\multipart$.     For parts (1) and (2), we can assume without loss of generality that $a$ is a simple tensor,
$a = a_1 \otimes \cdots \otimes a_n$.   

First assume  that $\blam$
is maximal in $(\multipart, \GammaDominates)$. 
For $a \in A^{\otimes n}$,  we have
$$
\begin{aligned}
a \m \blam \mfs v \mft w &= a d(\mfs) v y^{\alpha} x_\blam w^* d(\mft)^* \\
&= d(\mfs) \,(\leftexp{d(\mfs)^*}{\hskip -.3 em a })v y^{\alpha} x_\blam w^* d(\mft)^*.
\end{aligned}
$$
By Corollary \ref{corollary:  mult by a on left when blam is maximal}, $(\leftexp{d(\mfs)^*}{\hskip -.3 em a })v y^{\alpha} $ is a linear combination of elements
$v' y^\alpha$ with $v' \in V^\alpha$, since $\blam$ is maximal.  Hence
$a \m \blam \mfs v \mft w$ is a linear combination of terms of the form
$\m \blam \mfs {v'} \mft w$.
This demonstrates statement (1) of the Proposition in case $\blam$ is maximal.  Part (2) follows similarly. 
We  have shown that  
 $N^\blam$ is invariant under multiplication from the left  or right by 
elements $a \in A^{\otimes n}$.  Corollary \ref{corollary on action of symn on murphy 
type elements 2}  implies that $N^\blam$ is invariant under multiplication from the left  or right by 
elements of $\symn$.   Since $A \wr \symn$ is generated as an algebra by $A^{\otimes n}$ and $\symn$, we have that $N^\blam$ is a two sided  ideal in $A \wr \symn$.   Since $\blam$ is maximal, $\overline N^\blam = (0)$.  
 Finally, when $\blam$ is maximal,  $y^\alpha x_\blam \in N^\blam$ by Corollary \ref{corollary:  y alpha x lambda in ideal when lambda maximal}.    This completes the proof of all the statements under the assumption that $\blam$ is maximal.

Now take $\blam \in \multipart$ arbitrary, and assume that all the statements  of the Proposition 
holds when $\blam$ is replaced by $\bmu \in \multipart$ with 
$\bmu \StrictlyGammaDominates \blam$.  
Since $\overline{N}^\blam$ is the span of 
$\{N^{\bmu} : \bmu \StrictlyGammaDominates \blam\}$,  
it already follows that $\overline{N}^\blam$ is a two sided ideal.
As before, we have
$$
\begin{aligned}
a \m \blam \mfs v \mft w 
&= d(\mfs) \,(\leftexp{d(\mfs)^*}{\hskip -.3 em a })v y^{\alpha} x_\blam w^* d(\mft)^*.
\end{aligned}
$$
Write
$
(\leftexp{d(\mfs)^*}{\hskip -.3 em a }) = a'$.
Apply Lemma \ref{special form of cellularity for tensor product algebra} (1) to $(a' v )y^\alpha$.  
It follows that
$a' v y^\alpha = z_1 + z_2$,  where $z_1$ is a linear combination of terms of the form
$v' y^\alpha$ with $v' \in V^\alpha$, and $z_2$  is a linear combination of elements
$v y_\bgamma w^*$ with $\bgamma > \bgamma(\alpha)$, $v \in V^\bgamma$, and $w \in V^{\bgamma, \alpha}$. 
Since $N^\bmu$ is a two sided ideal 
and $y^{\alpha(\bmu)} x_\bmu \in N^\bmu$ for all $\bmu \StrictlyGammaDominates \blam$, 
by the inductive hypothesis, we can apply Lemma \ref{lemma: y v x lambda in N bar lambda}, which tells us that
$z_2 x_\blam \in \overline N^\blam$.  On the other hand, $d(\mfs) z_1 x_\blam w^* d(\mft)^*$ is a linear combination of terms of the form $\m \blam \mfs {v'} \mft w$.
This proves
statement (1).  Statement (2) is proven similarly, using  
 Lemma \ref{special form of cellularity for tensor product algebra} (2)

This finishes the proof of parts (1)  and (2) of the Proposition.  
 We have already seen, in the discussion of the case that $\blam$ is maximal, that 
parts (1) and (2) of the Proposition together with 
Corollary \ref{corollary on action of symn on murphy type elements 2}  
implies that $N^\blam$ is a two sided ideal.  

Finally, we prove  part (4),   By Lemma \ref{special form of cellularity for tensor product algebra} part (1), we have
$y^\alpha = z_1 + z_2$,  where $z_1$ is a linear combination of terms of the form
$v y^\alpha$ with $v \in V^\alpha$, and $z_2$ is a linear combination of elements
$v y_\bgamma w^*$ with $\bgamma > \bgamma(\alpha)$, $v \in V^\bgamma$, and $w \in V^{\bgamma, \alpha}$.  Since by the induction hypothesis, $y^{\alpha(\bmu)} x^\bmu \in N^\bmu$ for all $\bmu \StrictlyDominates \blam$, and since we know at this point that $N^\bmu$ is an ideal for all $\bmu$, we can apply 
Lemma \ref{lemma: y v x lambda in N bar lambda} to obtain that $z_2 x_\blam \in \overline N^\blam$.  
Thus $y^\alpha x_\blam$ is congruent modulo $\overline N^\blam$ to a linear combination of terms
$v y^\alpha x_\blam = v x_\blam y^\alpha$,  with $v \in V^\alpha$.   Applying 
Lemma \ref{special form of cellularity for tensor product algebra} part (2) and Lemma \ref{lemma: y v x lambda in N bar lambda} again to each of these terms gives that $y^\alpha x_\blam$ is congruent modulo $\overline N^\blam$ to a linear combination of terms $v y^\alpha x_\blam w^*$ with $v, w \in V^\alpha$.  But $vy^\alpha x_\blam w^* = \m  \blam  {\mft^\blam} v {\mft^\blam} w \in N^\blam$. 
\end{proof}

\begin{corollary} \label{corollary: y alpha x lambda in span of v y alpha x lambda}
Let $\blam \in \multipart$,  and write $\alpha = \alpha(\blam)$. We have
$$y^\alpha x_\blam \in
\spn\{ v y^\alpha x_\blam: v \in V^\alpha \} + \overline N^\blam.$$
\end{corollary}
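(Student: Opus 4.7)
The plan is to extract this statement directly from the argument already used to establish Proposition \ref{proposition on multiplication of murphy type elements by A} part (4); in fact the first half of that proof gives exactly this conclusion before multiplying on the right by a second copy of $V^\alpha$.

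First I would apply Lemma \ref{special form of cellularity for tensor product algebra}(1) to the element $y^\alpha$ itself (taking $a = 1$), writing
$$
y^\alpha = z_1 + z_2,
$$
where $z_1$ is an $R$--linear combination of terms $v\, y^\alpha$ with $v \in V^\alpha$, and $z_2$ is an $R$--linear combination of terms $v\, y_\bgamma\, w^*$ with $\bgamma > \bgamma(\alpha)$, $v \in V^\bgamma$, and $w \in V^{\bgamma, \alpha}$. Multiplying by $x_\blam$ on the right gives
$$
y^\alpha x_\blam = z_1 x_\blam + z_2 x_\blam,
$$
and $z_1 x_\blam$ already lies in $\spn\{v y^\alpha x_\blam : v \in V^\alpha\}$, as required.

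The remaining task is to show that $z_2 x_\blam \in \overline N^\blam$. For each summand $v\, y_\bgamma\, w^*$ of $z_2$, one has
$$
(v\, y_\bgamma\, w^*)\, x_\blam = v\, (y_\bgamma\, w^*\, x_\blam),
$$
so I would invoke Lemma \ref{lemma: y v x lambda in N bar lambda}, whose hypotheses (that $N^\bmu$ is a two--sided ideal and $y^{\alpha(\bmu)} x_\bmu \in N^\bmu$ for each $\bmu \StrictlyGammaDominates \blam$) are supplied by parts (3) and (4) of Proposition \ref{proposition on multiplication of murphy type elements by A}. The lemma yields $y_\bgamma w^* x_\blam \in \overline N^\blam$, and because $\overline N^\blam$ is a two--sided ideal (again by Proposition \ref{proposition on multiplication of murphy type elements by A}(3)), multiplying on the left by $v$ keeps us inside $\overline N^\blam$. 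Summing over the decomposition of $z_2$ gives $z_2 x_\blam \in \overline N^\blam$.

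There is no real obstacle here: the entire proof is a repackaging of the reasoning already carried out for Proposition \ref{proposition on multiplication of murphy type elements by A}(4), stopped one step earlier (before applying Lemma \ref{special form of cellularity for tensor product algebra}(2) on the right to further refine the $z_1 x_\blam$ contribution). The only care needed is to cite parts (3) and (4) of the Proposition to ensure the hypotheses of Lemma \ref{lemma: y v x lambda in N bar lambda} are in force when applied to $\blam$ itself.
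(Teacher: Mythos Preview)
Your proposal is correct and follows essentially the same approach as the paper: the paper's proof simply observes that the argument for Proposition \ref{proposition on multiplication of murphy type elements by A}(4) already establishes this congruence as its first step, and you have spelled out that step explicitly (applying Lemma \ref{special form of cellularity for tensor product algebra}(1) to $y^\alpha$, then Lemma \ref{lemma: y v x lambda in N bar lambda} to handle the $z_2 x_\blam$ term). Your care in citing parts (3) and (4) of the Proposition to justify the hypotheses of Lemma \ref{lemma: y v x lambda in N bar lambda} is appropriate, since the Corollary is stated after the Proposition has been fully established.
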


\begin{proof}  The  proof of Proposition \ref{proposition on multiplication of murphy type elements by A} part (4) shows that  $y^\alpha x_\blam $ is congruent modulo $\overline N^\blam$ to a linear combination of terms $v y^\alpha x_\blam $ where $v \in V^\alpha$.  
\end{proof}

\begin{proposition} \label{proposition: multiplicative property of general murphy basis elements}
Suppose that $\blam \in \multipart$, 
and $(\mfs, v) \in \mathcal T(\blam)$.
   Let $x \in A\wr \symn$.  Then for each $(\mfu, v') \in \mathcal T(\blam)$,  there exists $r_{(\mfu, v')} \in R$ such that
\begin{equation}\label{proposition: multiplicative property of general murphy basis elements equation 1}
x \m \blam \mfs v {\mft^\blam} 1 \equiv \sum_{(\mfu, v') \in \mathcal T(\blam)} r_{(\mfu, v')} \, \m \blam \mfu {v'} {\mft^\blam} 1\mod \overline N^\blam.
\end{equation}   
 Moreover,   
for all $(\mft, w) \in \mathcal T(\blam)$, 
\begin{equation}\label{proposition: multiplicative property of general murphy basis elements equation 2}
x \m \blam \mfs v \mft w \equiv \sum_{(\mfu, v') \in \mathcal T(\blam)} r_{(\mfu, v')} \, \m \blam \mfu {v'} \mft w \mod \overline N^\blam.
\end{equation}
\end{proposition}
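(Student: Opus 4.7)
The plan is to prove the first congruence and then deduce the second formally. The key observation for the reduction is that $\mft^\blam$ is the superstandard tableau, so $d(\mft^\blam) = 1$, which gives the factorization
$$\m \blam \mfs v \mft w \;=\; \m \blam \mfs v {\mft^\blam} 1 \cdot w^* d(\mft)^*,$$
since both sides equal $d(\mfs) v\, y^\alpha x_\blam w^* d(\mft)^*$. Because $\overline N^\blam$ is a two-sided ideal by Proposition \ref{proposition on multiplication of murphy type elements by A}(3), right multiplication of congruence \eqref{proposition: multiplicative property of general murphy basis elements equation 1} by $w^* d(\mft)^*$ yields \eqref{proposition: multiplicative property of general murphy basis elements equation 2} with exactly the same coefficients $r_{(\mfu, v')}$. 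Thus the whole proposition reduces to proving \eqref{proposition: multiplicative property of general murphy basis elements equation 1}.

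To prove \eqref{proposition: multiplicative property of general murphy basis elements equation 1}, I would use that $A \wr \symn = A^{\otimes n} \rtimes \symn$, so every $x \in A \wr \symn$ can be written as a finite sum $x = \sum_{\pi \in \symn} a_\pi \pi$ with $a_\pi \in A^{\otimes n}$. By linearity, it is enough to handle $x = a\pi$ with $a \in A^{\otimes n}$ and $\pi \in \symn$. Applying Corollary \ref{final lemma on action of symn on murphy type elements} to $\pi\, \m \blam \mfs v {\mft^\blam} 1$, we get
$$\pi\, \m \blam \mfs v {\mft^\blam} 1 \;\equiv\; \sum_{(\mfu, v') \in \mathcal T(\blam)} c_{(\mfu, v')}\, \m \blam \mfu {v'} {\mft^\blam} 1 \pmod{\overline N^\blam}$$
for some $c_{(\mfu, v')} \in R$. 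Now apply $a$ on the left to each surviving term: by Proposition \ref{proposition on multiplication of murphy type elements by A}(1), used with the second tableau equal to $\mft^\blam$ and with $w = 1 \in V^\alpha_1$,
$$a\, \m \blam \mfu {v'} {\mft^\blam} 1 \;\equiv\; \sum_{v'' \in V^\alpha} d^{\mfu, v'}_{v''}\, \m \blam \mfu {v''} {\mft^\blam} 1 \pmod{\overline N^\blam},$$
with the first tableau index $\mfu$ and the second pair $(\mft^\blam, 1)$ unchanged. Since $\overline N^\blam$ is a two-sided ideal, left multiplication by $a$ preserves the congruence from the previous step, so combining the two displays gives the claim for $x = a\pi$. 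Summing over $\pi$ then gives \eqref{proposition: multiplicative property of general murphy basis elements equation 1} in general.

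The only delicate point is the order in which the two reduction steps are applied: Corollary \ref{final lemma on action of symn on murphy type elements} is stated only for left multiplication by a symmetric group element acting on an element of the form $\m \blam \mfs v {\mft^\blam} 1$ (with the trivial tableau/identity pair on the right), and Proposition \ref{proposition on multiplication of murphy type elements by A}(1) requires the right hand index $w$ to lie in $V^\alpha_1$. Both requirements are met precisely because we have fixed $(\mft, w) = (\mft^\blam, 1)$ in the first claim. Applying $\pi$ first keeps us in the right form to then apply $a$, and the fact that Proposition \ref{proposition on multiplication of murphy type elements by A}(1) leaves the tableau indices unchanged is what makes the coefficients $r_{(\mfu, v')}$ depend only on $(\mfu, v')$ (and not on $(\mft, w)$), which is exactly what is needed for the second congruence \eqref{proposition: multiplicative property of general murphy basis elements equation 2} to follow by right multiplication by $w^* d(\mft)^*$.
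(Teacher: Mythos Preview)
Your proof is correct and follows essentially the same approach as the paper. The paper phrases the first step slightly more abstractly---it lets $U$ be the span of the $\m \blam {\mfs'}{v'}{\mft^\blam}1$ and observes that $U+\overline N^\blam$ is a left ideal because it is closed under left multiplication by the generators $A^{\otimes n}$ and $\symn$---but this amounts to exactly your decomposition $x=\sum_\pi a_\pi\pi$ together with the sequential application of Corollary~\ref{final lemma on action of symn on murphy type elements} and Proposition~\ref{proposition on multiplication of murphy type elements by A}(1); the deduction of \eqref{proposition: multiplicative property of general murphy basis elements equation 2} from \eqref{proposition: multiplicative property of general murphy basis elements equation 1} by right multiplication by $w^*d(\mft)^*$ is identical.
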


\begin{proof}   Let $U$ be the $R$--module spanned by the set of 
$\m \blam  {\mfs'} {v'}  {\mft^\blam} 1$,   where $(\mfs', v') \in \mathcal T(\blam)$.  
By Proposition \ref{proposition on multiplication of murphy type elements by A} (1)  and Corollary \ref{final lemma on action of symn on murphy type elements} ,  $U + \overline N^\blam$ is invariant under left multiplication by elements $a \in A^{\otimes n}$ and by $\pi \in \symn$.  
Since these elements generate $A \wr \symn$ as an algebra,
it follows that $U + \overline N^\blam$ is a left ideal.  Therefore, there exist coefficients
$r_{(\mfu, v')} \in R$ such that Equation \eqref{proposition: multiplicative property of general murphy basis elements equation 1} holds.
Now for any  $(\mft, w) \in \mathcal T(\blam)$,  multiply the congruence  of Equation 
 \eqref{proposition: multiplicative property of general murphy basis elements equation 1} 
on the right by
$w^* d(\mft)^*$.  Since $\overline N^\blam$ is an ideal and
$$
\m \blam \mfu {v'} {\mft^\blam} 1 w^* d(t)^* = \m \blam \mfu {v'} \mft w,
$$
this yields the congruence of Equation  \eqref{proposition: multiplicative property of general murphy basis elements equation 2} .
\end{proof}

\begin{lemma} \label{lemma involution on general murphy type elements} 
Let $\blam \in \multipart$ and  write $\alpha = \alpha(\blam)$. Let $(\mfs, v), (\mft, w) \in \mathcal T(\blam)$.
\begin{enumerate}
\item  $(y^\alpha x_\blam)^* \equiv y^{\alpha} x_\blam \mod \overline N^\blam$. 
\item
$(\m \blam \mfs v \mft w)^*   \equiv \m \blam \mft w \mfs v \mod \overline N^\blam$.
\end{enumerate}
\end{lemma}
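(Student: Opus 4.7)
The plan is to prove (1) first, then obtain (2) from it by a short computation exploiting the anti-automorphism property of the involution.

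For (1), I would rewrite $(y^\alpha x_\blam)^* = x_\blam^*(y^\alpha)^* = x_\blam (y^\alpha)^*$, using $x_\blam^* = x_\blam$. Because $x_\blam$ and $y^\alpha$ commute, the assertion reduces to showing that $x_\blam\bigl((y^\alpha)^* - y^\alpha\bigr) \in \overline N^\blam$. The key calculation is to expand $(y^\alpha)^* - y^\alpha$ explicitly as a sum of simple tensors that will be absorbed by $x_\blam$. The cellular axiom in $A$ yields $y_\gamma^* = y_\gamma + z_\gamma$ with $z_\gamma \in \bar A^\gamma$, and cyclic cellularity (via the equivalent cellular basis $\{v y_\mu w^*\}$ from Lemma \ref{lemma: globalizing bases of cell modules}) presents each $z_\gamma$ as an $R$--linear combination of terms $v y_\mu w^*$ with $\mu > \gamma$ and $v, w \in V^\mu$. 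Expanding the tensor product $(y^\alpha)^* = \bigotimes_j y_{\bgamma(\alpha)_j}^*$ and separating the unmixed term $y^\alpha$, the remaining difference becomes a linear combination of simple tensors of the form $v' y_\bgamma (w')^*$, where $\bgamma > \bgamma(\alpha)$ in $\Gamma^n$ (strictly, since at least one factor was replaced) and $v', w' \in V^{\bgamma, \alpha}$; the required $1$'s in the inactive positions come for free by writing $y_{\bgamma(\alpha)_j} = 1 \cdot y_{\bgamma(\alpha)_j} \cdot 1^*$.

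With that description in hand, Lemma \ref{lemma: y v x lambda in N bar lambda} applies---its standing hypotheses are the content of parts (3) and (4) of Proposition \ref{proposition on multiplication of murphy type elements by A}---so $x_\blam v' y_\bgamma \in \overline N^\blam$, and since $\overline N^\blam$ is a two-sided ideal, right multiplication by $(w')^*$ keeps the product inside it. Summing over the terms proves (1). For (2), I would expand directly
$$
(\m \blam \mfs v \mft w)^* \;=\; d(\mft)\, w\, (y^\alpha x_\blam)^*\, v^*\, d(\mfs)^*,
$$
then substitute $(y^\alpha x_\blam)^* \equiv y^\alpha x_\blam \pmod{\overline N^\blam}$ from (1); two-sidedness of $\overline N^\blam$ permits the outer factors $d(\mft)w$ and $v^*d(\mfs)^*$ to pass through the congruence, producing $\m \blam \mft w \mfs v$ on the right.

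The main obstacle is the bookkeeping in the expansion of $(y^\alpha)^* - y^\alpha$---specifically, verifying that the inactive tensor factors line up exactly with the definition of $V^{\bgamma, \alpha}$ so that Lemma \ref{lemma: y v x lambda in N bar lambda} can be invoked verbatim. Once that matching is confirmed, the remainder of the argument is formal manipulation of an anti-automorphism against a two-sided ideal.
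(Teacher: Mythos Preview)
Your proposal is correct and follows essentially the same route as the paper's own proof: expand $(y^\alpha)^*$ tensorwise using $y_\gamma^* \equiv y_\gamma \bmod \bar A^\gamma$, observe that the difference $(y^\alpha)^* - y^\alpha$ is a linear combination of terms $v' y_\bgamma (w')^*$ with $\bgamma > \bgamma(\alpha)$ and $v', w' \in V^{\bgamma,\alpha}$, then invoke Lemma~\ref{lemma: y v x lambda in N bar lambda} together with the ideal property of $\overline N^\blam$ (supplied by Proposition~\ref{proposition on multiplication of murphy type elements by A}). The only cosmetic difference is that you place $x_\blam$ on the left and use the second conclusion of Lemma~\ref{lemma: y v x lambda in N bar lambda}, whereas the paper places it on the right and uses the first; part~(2) is derived identically in both.
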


\begin{proof} Write $y^\alpha = y_1 \otimes \cdots y_n$, where for each $j$,  $y_j = y_{\bgamma(\alpha)_j}$.  By point 2(a) of Lemma \ref{lemma: equivalent conditions for cyclic cellular algebra},   we have that $y_j^*$ is equal to $y_j$ plus a linear combination of elements of the form $v_j y_{\gamma'} w_j$, where $\gamma' > \bgamma(\alpha)_j$, and $v_j, w_j \in V^{\gamma'}$.  Therefore
$(y^\alpha)^*$ is equal to $y^\alpha + z$, where $z$ is a linear combination of terms
$v y_\bgamma w^*$,  where $\bgamma > \bgamma(\alpha)$  and both $v$ and $w$ are in
$V^{\bgamma, \alpha}$.  Since $x_\blam^* = x_\blam$ and $x_\blam$ commutes with $y^\alpha$, 
$$
(y^\alpha x_\blam)^* =  (y^\alpha)^* x_\blam    = y^\alpha x_\blam + z x_\blam.
$$
Since $N^\bmu$ is  a two sided ideal and $y^{\alpha(\bmu)} x_\bmu \in N^\bmu$, 
for all 
$\bmu \in \multipart$,  
we can apply 
Lemma \ref{lemma: y v x lambda in N bar lambda}, 
which tells us that $z x_\blam \in \overline N^\blam$.   This proves part (1).
Part (2) follows from part (1) together with the factorization 
$$
\m \blam \mfs v \mft w = d(\mfs) v y^\alpha x_\blam w^* d(\mft)^*.
$$
\end{proof}

We are now ready to propose a cellular structure for the wreath product $A \wr \symn$.  The partially ordered set for the cellular structure is $(\multipart, \GammaDominates)$.  For each 
$\blam \in \multipart$, we take the index set $\mathcal T(\blam)$ to be one introduced above, namely, the set of pairs $(\mfs, v)$,  where $\mfs$ is a standard $\blam$--tableau and $v \in V^{\alpha(\blam)}$.   The proposed cellular basis is 
$$
\mathcal B = \{\m \blam \mfs v \mft w :  \blam \in \multipart,  (\mfs, v), (\mft, w) \in \mathcal T(\blam)\}.
$$
For $\blam \in \multipart$, we set 
$$
y_\blam =  y^{\alpha(\blam)} x_\blam.
$$
For $\blam \in \multipart$ and $(\mfs, u) \in \mathcal T(\blam)$, we set
$$
v_{(\mfs, u)} =  d(\mfs) u.
$$
Thus,
$$
\m \blam \mfs u \mft w  = v_{(\mfs, u)}y_\blam ( v_{(\mft, w)})^*.
$$

We have already verified that $\mathcal B$ satisfies several of the defining properties of a cellular basis.  In fact, Proposition \ref{proposition: multiplicative property of general murphy basis elements} and 
 Lemma \ref{lemma involution on general murphy type elements}
give us properties (2) and (3) of Definition \ref{weak cellularity}. 
What remains to show is that $\mathcal B$ is an $R$--basis of $A \wr \symn$.  

\begin{lemma}  $\mathcal B$ spans $A \wr \symn$ over $R$.  
\end{lemma}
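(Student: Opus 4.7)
The plan is to show that the $R$--submodule $J := \sum_{\bmu \in \multipart} N^\bmu$ of $A \wr \symn$ equals all of $A \wr \symn$. Since each $N^\bmu \subseteq \spn(\mathcal B)$ by definition, and conversely each basis element $\m \bmu \mfs v \mft w$ lies in $N^\bmu$ (taking $\bmu \GammaDominates \bmu$), the equality $J = A \wr \symn$ will immediately give $\spn(\mathcal B) = A \wr \symn$. By Proposition \ref{proposition on multiplication of murphy type elements by A}(3) each $N^\bmu$ is a two-sided ideal of $A \wr \symn$, so $J$ is also a two-sided ideal; it therefore suffices to exhibit $1 \in J$, or equivalently to show $J \supseteq A^{\otimes n}$.

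To establish $A^{\otimes n} \subseteq J$, I would invoke the cyclic cellular structure on the tensor product algebra described in Section \ref{subsection: cellularity of tensor products}. Applying the equivalent-basis construction for cyclic cellular algebras to $A^{\otimes n}$, one obtains that
$$
\{\, v\, y_\bgamma\, w^* : \bgamma \in \Gamma^n,\ v, w \in V^\bgamma\,\}
$$
is an equivalent cellular basis of $A^{\otimes n}$ and hence spans $A^{\otimes n}$. So it is enough to show that every such $v y_\bgamma w^*$ lies in $J$.

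Fix $\bgamma = (\gamma_1, \dots, \gamma_n) \in \Gamma^n$ and let $\alpha \in C_n$ be the composition with $\alpha_i = |\{j : \gamma_j = \gamma\paren i\}|$. Since $\bgamma$ and $\bgamma(\alpha)$ have the same multiset of entries, there exists $\pi \in \symn$ with $\leftexp{\pi}{(\bgamma(\alpha))} = \bgamma$, so that $\pi\, y^\alpha\, \pi\inv = \leftexp{\pi}{y^\alpha} = y_\bgamma$ in $A \wr \symn$. Now let $\bmu \in \multipart$ be the multipartition defined by $\bmu(\gamma\paren i) = (1^{\alpha_i})$ for $1 \le i \le r$; each component of $\bmu$ is a single column, so $\sym_\bmu$ is trivial, whence $x_\bmu = 1$ and $y^{\alpha(\bmu)} x_\bmu = y^\alpha$. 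Proposition \ref{proposition on multiplication of murphy type elements by A}(4) gives $y^\alpha \in N^\bmu$, and since $N^\bmu$ is a two-sided ideal we conclude
$$
v\, y_\bgamma\, w^* = v\,(\pi\, y^\alpha\, \pi\inv)\, w^* \in N^\bmu \subseteq J.
$$

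Combining these steps yields $A^{\otimes n} \subseteq J$, from which $J = A \wr \symn$ follows as explained in the first paragraph. I do not anticipate a serious obstacle: all the substantial ideal-theoretic work is already packaged in Proposition \ref{proposition on multiplication of murphy type elements by A}. The only points requiring a little care are the identification of the equivalent cyclic-cellular basis of $A^{\otimes n}$ in the form $\{v y_\bgamma w^*\}$ (via the tensor product construction of Section \ref{subsection: cellularity of tensor products}) and the observation that the ``columns-only'' multipartition $\bmu$ has trivial row stabilizer, which lets $x_\bmu = 1$ be used to land $y^\alpha$ directly inside $N^\bmu$.
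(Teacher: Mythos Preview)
Your proof is correct and follows essentially the same route as the paper's: both show that $\spn(\mathcal B)$ is a two-sided ideal, then use the ``columns-only'' multipartition $\bmu = ((1)^{\alpha_1},\dots,(1)^{\alpha_r})$ (so that $x_\bmu = 1$) together with conjugation by permutations to produce all of $A^{\otimes n}$, hence the identity. The only cosmetic difference is that the paper observes directly that $v\,y^\alpha\,w^* = \m \bmu {\mft^\bmu} v {\mft^\bmu} w$ already lies in $\mathcal B$, whereas you route through Proposition~\ref{proposition on multiplication of murphy type elements by A}(4) to place $y^\alpha$ in $N^\bmu$; both are fine.
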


\begin{proof}  Let $U$ denote the $R$--span of $\mathcal B$.
It follows from Proposition \ref{proposition: multiplicative property of general murphy basis elements} and 
 Lemma \ref{lemma involution on general murphy type elements} 
that $U$ is a two sided ideal in $A \wr \symn$.  
Fix a composition $\alpha$ of $n$ and take 
$$
\blam = ( (1)^{\alpha_1}, (1)^{\alpha_2}, \dots, (1)^{\alpha_r}).
$$
Then $x_\blam = 1$ and for $v, w \in V^\alpha$, 
$$
\m \blam {\mft^\blam} v  {\mft^\blam} w = v y^\alpha w^*.
$$
For $\pi \in \symn$, 
$$
\leftexp \pi {\hskip - .1 em (v y^\alpha w^*)} = \pi v y^\alpha w^* \pi\inv  \in U,
$$
since $U$ is a two sided ideal.

The collection of these elements, as $\alpha$, $v$, $w$, and $\pi$  vary,  constitute a (cellular) basis of $A^{\otimes n} \subseteq A \wr \symn$. In particular, the multiplicative identity
$1$ is contained in $U$.
\end{proof}

To show that $\mathcal B$ is linearly independent over $R$, we require an enumerative result. 

Let $d_i$ denote the rank of the cell module of $A$ associated with $\gamma_i \in \Gamma$, 
$d_i = |\mathcal T(\gamma_i)|$.  Let $d = \sum_{i = 1}^r d_i^2$ denote the rank of $A$.  

For $\blam \in \multipart$, let $f_\blam$ denote the number of standard $\blam$--tableaux;  then we have
$$
f_\blam =  \binom {n}{\alpha(\blam)}  \prod_i f_{\lambda\power i},
$$
where $\binom {n}{\alpha(\blam)}  $ denotes the multinomial coefficient 
$\binom {n} {|\la \power 1|, \dots, |\la \power r|}$.   The size of $\mathcal T(\blam)$ is
$$
|\mathcal T(\blam)| = |V^{\alpha(\blam)}| f_\blam = \left (\prod_i d_i^{|\la \power i|} \right ) f_\blam.
$$

\begin{claim} \label{enumerative claim}
 $$\sum_{\blam \in \multipart} |\mathcal T(\blam)|^2 =  d^n n !.$$
\end{claim}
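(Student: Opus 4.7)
The plan is to expand the left side using the explicit product formulas given in the paragraph just before the claim, then collapse the internal sums using two standard identities: the dimension identity $\sum_{\lambda \vdash m} f_\lambda^2 = m!$ for the symmetric group, and the multinomial theorem.

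First I would substitute in the formula $|\mathcal T(\blam)|^2 = \binom{n}{\alpha(\blam)}^2 \prod_i d_i^{2|\lambda^{(i)}|} \prod_i f_{\lambda^{(i)}}^2$ and reorganize the sum by first fixing the composition $\alpha = \alpha(\blam) \in C_n^\Gamma$ (i.e.\ fixing the sizes $\alpha_i = |\lambda^{(i)}|$), and then summing over all choices of partitions $\lambda^{(i)} \vdash \alpha_i$. Because the factors $\binom{n}{\alpha}$ and $d_i^{2\alpha_i}$ depend only on $\alpha$, the inner sum factors as $\prod_i \sum_{\lambda^{(i)} \vdash \alpha_i} f_{\lambda^{(i)}}^2$.

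Next I would invoke the classical identity $\sum_{\lambda \vdash m} f_\lambda^2 = m!$ (equivalently the decomposition of the regular representation of $\sym_m$, or the RSK correspondence), to replace each inner sum by $\alpha_i!$. This gives
\[
\sum_{\blam \in \multipart} |\mathcal T(\blam)|^2
= \sum_{\alpha} \binom{n}{\alpha}^2 \prod_i d_i^{2\alpha_i} \prod_i \alpha_i!
= \sum_{\alpha} \frac{(n!)^2}{\prod_i \alpha_i!} \prod_i d_i^{2\alpha_i}
= n! \sum_{\alpha} \binom{n}{\alpha} \prod_i (d_i^2)^{\alpha_i},
\]
where $\alpha$ ranges over all $r$-tuples of nonnegative integers summing to $n$.

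Finally, the multinomial theorem collapses the remaining sum: $\sum_{\alpha} \binom{n}{\alpha} \prod_i (d_i^2)^{\alpha_i} = (d_1^2 + \cdots + d_r^2)^n = d^n$, using the defining relation $d = \sum_i d_i^2$. This yields the claim. There is no real obstacle here; the only ``content'' is recognizing the classical identity $\sum_{\lambda \vdash m} f_\lambda^2 = m!$ and bookkeeping the multinomial/factorial factors correctly when combining $\binom{n}{\alpha}^2 \prod_i \alpha_i! = n! \binom{n}{\alpha}$.
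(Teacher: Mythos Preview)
Your argument is correct. You expand directly, group by the composition $\alpha=\alpha(\blam)$, apply the classical identity $\sum_{\lambda\vdash m} f_\lambda^2=m!$ componentwise, and finish with the multinomial theorem. All the bookkeeping checks out.

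The paper takes a genuinely different route: it interprets the claim as the dimension equation for a semisimple wreath product. Specifically, it picks any complex semisimple algebra $B$ with simple modules of dimensions $d_1,\dots,d_r$ (so $\dim B=d$), identifies the simple $B\wr\symn$--modules via the standard construction $\Ind_{\sym_\alpha}^{\symn}(W^\alpha\otimes S^\blam)$ and checks their dimensions are exactly $|\mathcal T(\blam)|$, and then proves a Maschke-type lemma that $B\rtimes G$ is semisimple whenever $B$ is semisimple over a field of characteristic zero and $G$ is finite. Semisimplicity gives $\sum_\blam|\mathcal T(\blam)|^2=\dim(B\wr\symn)=d^n n!$. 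Your argument is more elementary and self-contained --- it needs only the $r=1$ case $\sum_{\lambda\vdash m} f_\lambda^2=m!$ plus the multinomial theorem --- whereas the paper's argument is more structural and explains \emph{why} the numbers $|\mathcal T(\blam)|$ are the right ones (they are dimensions of simple modules), at the cost of importing the classification of simples for wreath products and a semisimplicity statement.
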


To verify this claim, we will use some results about wreath products of semisimple algebras.
Let $B$ be a semisimple algebra over the complex numbers $\C$, with simple modules $W_1, \dots, W_r$  of dimensions $\dim(W_i) = d_i$;  hence $\dim(B) = \sum_i d_i^2 = d$.    The simple modules of $B \wr \symn$  are indexed by multipartitions
$\blam = (\la\power 1, \dots, \la\power r)$.  For a multipartition $\blam$, let $\alpha = \alpha(\blam)$. 
Let $W^\alpha$ denote the simple $B^{\otimes n}$ module
$$
W^\alpha = W_1^{\otimes \alpha_1} \otimes W_2^{\otimes \alpha_2} \otimes \cdots \otimes W_r^{\otimes \alpha_r}
$$
and let $S^\blam$ denote the simple $\sym_\alpha$ module
$$
S^\blam = S^{\la \power 1} \otimes \cdots \otimes S^{\la \power r},
$$
where for a partition $\la$ of size $k$, $S^\la$ is the Specht module of $\sym_k$ corresponding to $\la$.  
The simple    $B \wr \symn$--module corresponding to $\blam$ has the form
$$
W^\blam = \Ind_{\sym_\alpha}^{\symn} (W^\alpha \otimes  S^\blam).$$
An explanation of this construction of simple modules of $B \wr  \symn$ can be found in ~\cite{macdonald-wreath} and ~\cite{chuang-meng}.
The dimension of the module  $W^\blam$ is
$$
\dim(W^\blam) = 
[\symn: \sym_\alpha]  \dim(W^\alpha)\dim(S^\blam) =  \binom n \alpha \prod_i d_i^{|\la \power i|} \prod_i f_{\la \power i} = |\mathcal T(\blam)|. 
$$
Since $\dim(B \wr \symn) = d^n n!$,  Claim \ref{enumerative claim} is  equivalent to the assertion that $B \wr \symn$ is semisimple.  This follows from a more general Maschke-type lemma:

\begin{lemma} If $B$ is a semisimple algebra over a field of characteristic zero and $G$ is a finite group
acting on $B$ by automorphisms, then $B \rtimes G$ is semisimple.
\end{lemma}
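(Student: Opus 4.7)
The plan is to give the classical Maschke-style averaging argument. I will show that every left $B \rtimes G$-module $M$ is semisimple by producing, for an arbitrary $B\rtimes G$-submodule $N \subseteq M$, a $B \rtimes G$-linear projection $\tilde p \colon M \to N$. Once this is done, $M = N \oplus \ker(\tilde p)$ as $B \rtimes G$-modules, every submodule is a direct summand, and hence $B \rtimes G$ is a semisimple ring.

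First, I would exploit semisimplicity of $B$ alone: restricting scalars along $B \hookrightarrow B \rtimes G$, the module $M$ is a semisimple $B$-module and $N$ is a $B$-submodule, so there exists a $B$-module projection $p \colon M \to N$. No use of $G$ is made at this step. Then I would average $p$ over $G$ by setting
$$
\tilde p(m) = \frac{1}{|G|} \sum_{g \in G} g \, p(g^{-1} m),
$$
which is well-defined because $|G|$ is invertible in the base field.

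It remains to verify three properties of $\tilde p$: it is $B$-linear, it is $G$-equivariant, and it restricts to the identity on $N$. The $G$-equivariance is the usual reindexing $g \mapsto hg$ in the sum. The identity-on-$N$ property holds because $N$ is $G$-stable, so $g^{-1} n \in N$ and $p(g^{-1} n) = g^{-1} n$, whence $g p(g^{-1} n) = n$ and averaging returns $n$. The $B$-linearity is the one computation that needs the crossed-product relation $g b = \leftexp{g}{b}\, g$: using $B$-linearity of $p$, one has $p(g^{-1}(bm)) = \leftexp{g^{-1}}{b}\, p(g^{-1} m)$, and then $g\, \leftexp{g^{-1}}{b} = b\, g$ in $B \rtimes G$, so each summand becomes $b \cdot g\, p(g^{-1} m)$ and $\tilde p(bm) = b\, \tilde p(m)$. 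Combining $B$-linearity with $G$-equivariance shows that $\tilde p$ is a $B \rtimes G$-homomorphism, which completes the argument.

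The proof is wholly routine; the only subtle point is the handling of the crossed-product relation in the $B$-linearity check, and even there the manipulation is one line. No serious obstacle arises, and the only hypotheses actually used are semisimplicity of $B$ (to produce $p$) and invertibility of $|G|$ (to form the average), so the argument goes through over any field in which $|G|$ is a unit, in particular over any field of characteristic zero.
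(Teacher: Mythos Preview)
Your proof is correct. It is the classical module-theoretic Maschke argument, and every step you describe goes through as stated: the $B$-linear projector exists by semisimplicity of $B$, the average is well-defined because $|G|$ is invertible in characteristic zero, and your verification of $B$-linearity via the crossed-product relation $g\,b = \leftexp{g}{b}\,g$ is exactly right.

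The paper, however, takes a different route. Rather than averaging a projector, it reduces to an algebraically closed base field and then computes the trace $\Tr$ of the left regular representation of $B \rtimes G$, showing $\Tr\bigl(\sum_g b_g g\bigr) = |G|\,\Tr_B(b_e)$; since $\Tr_B$ is non-degenerate (as $B$ is finite-dimensional semisimple), so is $\Tr$, and hence $\rad(B \rtimes G) = 0$. Your argument is more elementary and more general---it needs no finite-dimensionality of $B$, no extension of scalars, and no trace machinery---while the paper's approach has the virtue of exhibiting $B \rtimes G$ as a symmetric algebra with an explicit non-degenerate trace form, which fits naturally with the trace-theoretic themes elsewhere in the paper.
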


\def\Tr{{\rm Tr}}
\def\rad{{\rm rad}}
\noindent
{\em Sketch of a proof:}  One can reduce to the case that the field is algebraically closed by tensoring with a field extension.   The trace  $\Tr$ of the left regular representation of $B \rtimes G$ satisfies
$\Tr(\sum_{g\in G} b_g g)  = \Tr(b_e) = |G|\,  \Tr_B(b_e)$, where $\Tr_B$ is the trace of the left regular representation of $B$.    It follows from this that $\Tr$ is non-degenerate, and therefore   $\rad(B \rtimes G)$ is zero, since  $\rad(B \rtimes G) \subseteq \rad(\Tr)$. \qed

\medskip
Returning to our candidate $\mathcal B$ for a cellular basis of $A \wr \symn$, we have:
\begin{lemma}  $\mathcal B$ is linearly independent over $R$.  
\end{lemma}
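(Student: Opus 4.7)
The plan is to deduce linear independence from the spanning statement just proved, together with the enumerative Claim~\ref{enumerative claim} and a rank count.

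First, I would verify that $A \wr \symn$ is $R$-free of rank $d^n n!$. Indeed, the cellular basis $\mathcal C$ exhibits $A$ as a free $R$-module of rank $d = \sum_{i=1}^r d_i^2$, so $A^{\otimes n}$ is $R$-free of rank $d^n$; and as an $R$-module, $A \wr \symn \cong A^{\otimes n} \otimes_R R\symn$ is free of rank $d^n n!$.

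Second, Claim~\ref{enumerative claim} gives
$$
|\mathcal B| = \sum_{\blam \in \multipart} |\mathcal T(\blam)|^2 = d^n n!,
$$
which is exactly the $R$-rank of $A\wr \symn$.

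Finally, I would argue that a spanning set of cardinality equal to the rank of an ambient finitely generated free $R$-module is automatically a basis. To make this precise, enumerate $\mathcal B = \{b_1, \ldots, b_N\}$ with $N = d^n n!$ and consider the $R$-linear map $\phi\colon R^N \to A\wr \symn$ sending the standard basis to $\mathcal B$. By the previous lemma, $\phi$ is surjective. Composing $\phi$ with an $R$-module isomorphism $A\wr \symn \cong R^N$ yields a surjective endomorphism of $R^N$, which must be an isomorphism by the classical theorem (a consequence of the Cayley--Hamilton trick) that a surjective endomorphism of a finitely generated module over a commutative ring is injective. Hence $\phi$ itself is injective, and $\mathcal B$ is $R$-linearly independent, completing the proof that $\mathcal B$ is an $R$-basis of $A\wr \symn$. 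There is no real obstacle: the enumerative heavy lifting lives in Claim~\ref{enumerative claim}, and the only conceptual point is the surjective-endomorphism principle, which lets one conclude linear independence over the integral domain $R$ without passing to its field of fractions.
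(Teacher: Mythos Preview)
Your proof is correct and follows essentially the same approach as the paper: both combine the spanning result, the enumerative identity $|\mathcal B| = d^n n!$ from Claim~\ref{enumerative claim}, and the fact that $A \wr \symn$ is free of rank $d^n n!$. The only minor difference is in the final step, where the paper invokes the integral domain hypothesis (implicitly passing to the field of fractions) while you instead appeal to the surjective-endomorphism principle for finitely generated modules, which is a slightly more general justification.
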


\begin{proof}  We already know that $\mathcal B$ spans $A \wr \symn$ over $R$, and the cardinality of 
$\mathcal B$ is $\sum_\blam |T(\blam)|^2$.  By Claim \ref{enumerative claim}, this equals $d^n n!$. On the other hand,
$A \wr \symn$ is free of rank $d^n n!$ over $R$ since $A$ is free of rank $d$.   It follows that $\mathcal B$ is linearly independent, since the ground ring $R$ is assumed to be an integral domain. 
\end{proof}

\noindent
{\em Completion of the proof of Theorem \ref{theorem cellularity of wreath products}}. We have shown that $\mathcal B$ is a cellular basis of $A \wr \symn$.  Fix $\blam \in  \multipart$ and let $\alpha = \alpha(\blam)$.  
Proposition \ref{proposition: multiplicative property of general murphy basis elements}
 shows that
$$
\begin{aligned}
M &:= \spn\{\m \blam  {\mfs} {u}  {\mft^\blam} 1  + \overline N^\blam:  (\mfs, u) \in \mathcal T(\blam) \}  \\ &= \spn\{ v_{(\mfs, u)} \,y_\blam +  \overline N^\blam :  (\mfs, u) \in \mathcal T(\blam)\}
\end{aligned}
$$
is an $A\wr \symn$ module and isomorphic to the cell module $\Delta^\blam$.  
Corollary \ref{corollary: y alpha x lambda in span of v y alpha x lambda} implies that
$y_\blam + \overline N^\blam \in M$, so that $\Delta^\blam$ is cyclic.  \qed

\subsection{The structure of cell modules of $A \wr \symn$}
Let $A$ be any $R$--algebra,  and let $E_1, \dots, E_r$ be a collection of $A$--modules.  We discuss a construction of $A \wr \symn$--modules based on this family of $A$--modules.  For a thorough discussion of this construction, see ~\cite{chuang-meng}.   

Let $\blam = (\la\power 1, \dots, \la\power r)$ be a multipartition of total size $n$ with $r$ parts.  Let $\alpha = \alpha(\blam)$.   Recall from Section 
\ref{subsection the cellular structure of the group algebra of a Young subgroup} that
$$
\Delta^{\blam}_R =  \Delta^{\lambda\power 1}_R \otimes \cdots \otimes  \Delta^{\lambda\power r}_R,
$$
is a cell module for $R \sym_\alpha \cong R\sym_{\alpha_1} \otimes \cdots \otimes R\sym_{\alpha_r} $, and that we identify $\Delta^{\blam}_R$ with the $R$--span of the set of $d(\mfs) x_\blam + J^\blam$,  where $\mfs$ is a standard $\blam$--tableau of the initial kind, and  $J^\blam$ denotes
$\overline{R\sym_\alpha}^\blam$.

Let $E^\alpha = E_1^{\otimes \alpha_1} \otimes \cdots \otimes E_r^{\otimes \alpha_r}$.    Then $E^\alpha$ is an $A \wr \sym_\alpha$--module, with $A^{\otimes n}$ acting by the tensor product action and
$\sym_\alpha$ acting by place permutations.  Moreover, $E^\alpha \otimes \Delta^\blam_R$ is also
an $A \wr \sym_\alpha$--module, with $a (v \otimes m) = a v \otimes m$ and
$\pi( v \otimes m) = \leftexp{\pi}{\hskip - .1 em v} \otimes \pi m$,  for $a \in A^{\otimes n}$,  $\pi \in \sym_\alpha$,  $v \in E^\alpha$ and $m \in \Delta^\blam_R$.  

Finally, we obtain an $A \wr \symn$--module by
$$
\Ind_{A\wr\sym_\alpha}^{A\wr \symn}(E^\alpha \otimes \Delta^\blam_R)
= (A\wr \symn) \otimes_{A \wr \sym_\alpha} (E^\alpha \otimes  \Delta^\blam_R) .
$$

When this construction is applied to the simple modules of $A$, one obtains the simple modules of the wreath product $A \wr \symn$.   More exactly,  suppose now that $A$ is a finite dimensional algebra over a field $K$ and that $K$ is a splitting field for $A$.  Let $E_1, \dots, E_r$ be a complete list of mutually inequivalent simple $A$--modules.  Assume moreover that $\char(K) = 0$, so that  $K\sym_j$ is split semisimple for all $j \ge 0$, and the cell modules $\Delta^\lambda_K$, for $\la \in \Lambda_j$,  are the simple $K\sym_j$--modules.

The following theorem is obtained by adapting the analysis of the representation theory of wreath products of groups ~\cite{Specht-1933}, and of semidirect products of groups in general ~\cite{Clifford-1937}
 to wreath product of algebras.  A proof was given  in ~\cite{macdonald-wreath}.  See also ~\cite{chuang-meng}.

\begin{theorem}  Let $A$ be a finite dimensional algebra over a field $K$ of characteristic zero, with $K$ a splitting field for $A$.  Let $E_1, \dots, E_r$ be a complete list of simple $A$--modules, up to isomorphism.   Then the modules
$$
\Ind_{A\wr\sym_\alpha}^{A\wr \symn}(E^\alpha \otimes \Delta^\blam_K),
$$
where $\blam$ varies over multipartitions of total size $n$ with $r$ parts and $\alpha = \alpha(\blam)$,
are simple and mutually inequivalent.  All simple $A\wr \symn$--modules are of this form.
\end{theorem}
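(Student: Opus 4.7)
The plan is to invoke Clifford theory for the crossed product $A\wr\symn = A^{\otimes n}\rtimes\symn$, in the spirit of Specht's original analysis of representations of $G\wr\symn$ adapted to the algebra setting by Macdonald. The argument divides naturally into a classification of simple $A^{\otimes n}$--modules, an analysis of the $\symn$--action on these, and a Clifford/Mackey argument for the crossed product.

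First I would classify the simple $A^{\otimes n}$--modules. Since $K$ is a splitting field for $A$ of characteristic zero, the simple $A$--modules are $E_1,\dots,E_r$, and the simple $A^{\otimes n}$--modules are exactly the tensor products $E_{i_1}\otimes\cdots\otimes E_{i_n}$ for tuples $(i_1,\dots,i_n)\in\{1,\dots,r\}^n$. The place-permutation action of $\symn$ on $A^{\otimes n}$ induces an action on isomorphism classes of simples by permuting these indices. The orbits are in bijection with weak compositions $\alpha=(\alpha_1,\dots,\alpha_r)$ of $n$; a canonical representative is $E^\alpha = E_1^{\otimes\alpha_1}\otimes\cdots\otimes E_r^{\otimes\alpha_r}$, with stabilizer the Young subgroup $\sym_\alpha$. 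The key observation is that $E^\alpha$ extends honestly to an $A\wr\sym_\alpha$--module via place permutation (well-defined because $\sym_\alpha$ only permutes tensor factors of the same isomorphism type), so that the usual $2$--cocycle obstruction in Clifford theory is trivial in this situation.

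Next I would classify the simples of $A\wr\symn$ whose restriction to $A^{\otimes n}$ lies in the $\symn$--orbit of $E^\alpha$. For any $K\sym_\alpha$--module $T$, the tensor product $E^\alpha\otimes_K T$ is an $A\wr\sym_\alpha$--module via the diagonal $\sym_\alpha$--action. Since $\mathrm{char}(K)=0$, the algebra $K\sym_\alpha\cong K\sym_{\alpha_1}\otimes\cdots\otimes K\sym_{\alpha_r}$ is split semisimple with simples precisely the $\Delta^\blam_K$ for multipartitions $\blam$ satisfying $\alpha(\blam)=\alpha$. The Clifford/Mackey formalism for crossed products then yields that each induced module $\Ind_{A\wr\sym_\alpha}^{A\wr\symn}(E^\alpha\otimes\Delta^\blam_K)$ is simple, and that two such modules attached to distinct $\blam$ are non-isomorphic (their $E^\alpha$--isotypical components, sitting inside the restriction to $A^{\otimes n}$, realize inequivalent simple $K\sym_\alpha$--modules). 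For completeness, given an arbitrary simple $A\wr\symn$--module $V$, I would pick a nonzero $E^{(i_1,\dots,i_n)}$--isotypical summand of $V|_{A^{\otimes n}}$, conjugate by an appropriate $\pi\in\symn$ to bring it to an $E^\alpha$--isotypical form, observe that this summand is an $A\wr\sym_\alpha$--submodule isomorphic to $E^\alpha\otimes T$ for some simple $K\sym_\alpha$--module $T\cong\Delta^\blam_K$, and apply Frobenius reciprocity together with simplicity of $V$ to identify $V$ with $\Ind_{A\wr\sym_\alpha}^{A\wr\symn}(E^\alpha\otimes\Delta^\blam_K)$.

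The main obstacle is the clean execution of the Clifford/Mackey program for the crossed product $A^{\otimes n}\rtimes\symn$ rather than for an ordinary group; the subtlety in general crossed products is the possible appearance of a projective $2$--cocycle when extending a simple module from $A^{\otimes n}$ to its stabilizer subalgebra. In our setting this obstruction vanishes because the action of $\sym_\alpha$ on $E^\alpha$ by place permutation is a genuine (non-projective) action, so the tensor product construction $E^\alpha\otimes_K T$ yields an honest module structure. The detailed verification of the resulting Clifford-type theorem for wreath products of algebras is carried out in~\cite{macdonald-wreath} and~\cite{chuang-meng}.
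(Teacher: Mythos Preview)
Your proposal is correct and sketches the standard Clifford--Mackey argument for wreath products of algebras. Note, however, that the paper does not actually prove this theorem: it merely states the result and cites the literature, writing that a proof ``is obtained by adapting the analysis of the representation theory of wreath products of groups~\cite{Specht-1933}, and of semidirect products of groups in general~\cite{Clifford-1937} to wreath product of algebras,'' with a proof given in~\cite{macdonald-wreath} (see also~\cite{chuang-meng}). Your sketch is precisely the argument one finds in those references, and you cite the same sources; so there is nothing to compare --- you have filled in what the paper leaves to citation.
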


Not surprisingly,  when $A$ is a cyclic cellular algebra over an integral domain $R$,  the cell modules of the wreath product $A \wr \symn$ are obtained by the same construction.  Let $A$ be such an algebra, and adopt the notations of Section \ref{subsection: cellularity theorem for wreath products} regarding the cellular structure of $A$ and of $A \wr \symn$.  
In particular $(\Gamma, \ge)$   is the partially ordered set associated with the cellular structure of $A$, and  $\Gamma = (\gamma(1), \dots, \gamma(r))$ is a listing of $\Gamma$ consistent with the partial order.  In order to avoid confusing overuse of the symbol $\Delta$ for the cell modules of various algebras,  let $E_i$ denote the cell module of $A$ associated to $\gamma(i) \in \Gamma$, and let
$C^\blam$ denote the cell module of $A\wr \symn$ associated to $\blam \in \multipart$.  
Recall that $\bgamma(\alpha) \in \Gamma^n$ is the sequence in $\Gamma^n$  whose first $\alpha_1$ entries are equal to $\gamma(1)$,  next $\alpha_2$ entries are equal to $\gamma(2)$, and so forth.
The $A^{\otimes n}$--module $E^\alpha = E_1^{\otimes \alpha_1} \otimes \cdots \otimes E_r^{\otimes \alpha_r}$ is the cell module corresponding to $\bgamma(\alpha)$.  We are going to write
$I^\alpha$ for $\overline{A^{\otimes n}}^{\,\gamma(\alpha)}$, namely the span of elements of the cellular basis of $A^{\otimes n}$ corresponding to $\bgamma \in \Gamma^n$ with $\bgamma > \gamma(\alpha)$.   The cell module $E^\alpha$ has basis $\{v y^\alpha +  I^\alpha:  v \in V^\alpha\}$.  

\begin{lemma}  \label{lemma y alpha J lambda contained in N bar lambda}
Let $\blam \in \multipart$ and let $\alpha = \alpha(\blam)$.  Write 
$J^\blam$ for $\overline{R\sym_\alpha}^\blam$.  We have $y^\alpha J^\blam \subset \overline N^\blam$.
\end{lemma}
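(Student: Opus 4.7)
The plan is to unpack the Murphy basis description of $J^\blam$ from Section~\ref{subsection the cellular structure of the group algebra of a Young subgroup} and reduce the claim to Proposition~\ref{proposition on multiplication of murphy type elements by A}(4) together with the ideal property established in part~(3) of the same proposition.

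Recall that $J^\blam = \spn\{\mm \bmu \mfu \mfv : \bmu \in \Lambda_\alpha, \bmu \StrictlyDominates \blam, \mfu, \mfv \in \mathcal T(\bmu)\}$, where $\mm \bmu \mfu \mfv = d(\mfu) x_\bmu d(\mfv)^*$ and $\mfu, \mfv$ are standard $\bmu$-tableaux of the initial kind; in particular $d(\mfu), d(\mfv) \in \sym_\alpha$, and since $\alpha(\bmu) = \alpha$ we may use Remark~\ref{remark on dominance and gamma dominance} to conclude $\bmu \StrictlyGammaDominates \blam$. So it suffices to show $y^\alpha \mm \bmu \mfu \mfv \in \overline N^\blam$ for a single such basis element.

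First I would use the fact that $y^\alpha$ is fixed by the place-permutation action of $\sym_\alpha$, which implies $y^\alpha$ commutes with every element of $\sym_\alpha \subseteq R\symn$. Hence
\[
y^\alpha \mm \bmu \mfu \mfv \;=\; y^\alpha d(\mfu) x_\bmu d(\mfv)^* \;=\; d(\mfu)\, (y^\alpha x_\bmu)\, d(\mfv)^*.
\]
Since $\alpha(\bmu) = \alpha$, the element $y^\alpha x_\bmu$ is precisely $y^{\alpha(\bmu)} x_\bmu$, which lies in $N^\bmu$ by Proposition~\ref{proposition on multiplication of murphy type elements by A}(4). Because $\bmu \StrictlyGammaDominates \blam$, we have $N^\bmu \subseteq \overline N^\blam$ by Definition~\ref{definition of standard ideals in wreath product algebra}.

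Finally, Proposition~\ref{proposition on multiplication of murphy type elements by A}(3) tells us that $\overline N^\blam$ is a two-sided ideal of $A\wr\symn$, so left multiplication by $d(\mfu)$ and right multiplication by $d(\mfv)^*$ preserve membership, giving $d(\mfu)(y^\alpha x_\bmu) d(\mfv)^* \in \overline N^\blam$, as desired. There is no real obstacle here: everything is already in place; the only point requiring a moment's attention is the identification $\bmu \StrictlyDominates \blam \Leftrightarrow \bmu \StrictlyGammaDominates \blam$ that allows us to move from the ``internal'' dominance order on $\Lambda_\alpha$ used in the definition of $J^\blam$ to the $\Gamma$-dominance order used in the definition of $\overline N^\blam$.
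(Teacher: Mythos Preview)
Your proof is correct and follows essentially the same route as the paper's: expand $J^\blam$ in its Murphy-type basis, commute $y^\alpha$ past $d(\mfu)\in\sym_\alpha$, invoke Proposition~\ref{proposition on multiplication of murphy type elements by A}(4) to place $y^\alpha x_\bmu$ in $N^\bmu\subseteq\overline N^\blam$, and finish with the ideal property. Your only addition is making explicit the passage from $\bmu\StrictlyDominates\blam$ to $\bmu\StrictlyGammaDominates\blam$ via Remark~\ref{remark on dominance and gamma dominance}, which the paper leaves implicit.
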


\begin{proof}  $J^\blam$ is the span of elements  $d(\mfs) x_\bmu d(\mft)^*$  where
$\bmu \StrictlyDominates \blam$,  $\alpha(\bmu) = \alpha$,  and $\mfs, \mft$ are standard $\bmu$--tableaux of the initial kind.  In particular $d(\mfs), d(\mft) \in \sym_\alpha$.   We have
$y^\alpha d(\mfs) x_\bmu d(\mft)^* = d(\mfs) y^\alpha x_\bmu d(\mft)^*$.  But $y^\alpha x_\bmu \in N^\bmu \subseteq \overline N^\blam$ by Proposition \ref{proposition on multiplication of murphy type elements by A} (4).  
\end{proof}

\begin{theorem}  Let $\blam \in \multipart$ and let $\alpha = \alpha(\blam)$.  The cell module
$C^\blam$ of $A \wr \symn$ satisfies
$$C^\blam \cong \Ind_{A\wr\sym_\alpha}^{A\wr \symn}(E^\alpha \otimes \Delta^\blam_R).$$
\end{theorem}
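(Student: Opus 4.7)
The plan is to construct the isomorphism via the universal property of induction, viewed as a form of Frobenius reciprocity. I will first define an $A\wr\sym_\alpha$-equivariant $R$-linear map
\[\psi\colon E^\alpha \otimes_R \Delta^\blam_R \to C^\blam\big|_{A\wr\sym_\alpha}\]
by $\psi\big((ay^\alpha + I^\alpha) \otimes (zx_\blam + J^\blam)\big) = ay^\alpha z x_\blam + \overline N^\blam$ for $a \in A^{\otimes n}$ and $z \in R\sym_\alpha$ (note $y^\alpha z = z y^\alpha$ so this also equals $az\, y_\blam + \overline N^\blam$). By the universal property of induction, $\psi$ will extend uniquely to an $A\wr\symn$-module homomorphism
\[\Phi\colon \Ind_{A\wr\sym_\alpha}^{A\wr \symn}(E^\alpha \otimes \Delta^\blam_R) \to C^\blam,\qquad \Phi(x \otimes \bar e \otimes \bar m) = x\,\psi(\bar e \otimes \bar m),\]
and I will then show $\Phi$ is an isomorphism by combining surjectivity with a rank count.

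The most delicate step is verifying that $\psi$ is well defined (independent of the choice of lifts). Changing the lift of $\bar m$ by an element of $J^\blam$ contributes an error in $a y^\alpha J^\blam$, which lies in $\overline N^\blam$ by Lemma \ref{lemma y alpha J lambda contained in N bar lambda} and the two-sided ideal property of $\overline N^\blam$. Changing the lift of $\bar e$ by an element $i \in A^{\otimes n} y^\alpha \cap I^\alpha$ is harder: by Lemma \ref{special form of cellularity for tensor product algebra}, such $i$ is an $R$-linear combination of terms $v y_\bgamma w^*$ with $\bgamma > \bgamma(\alpha)$, $v \in V^\bgamma$, and crucially $w \in V^{\bgamma,\alpha}$. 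To show $izx_\blam \in \overline N^\blam$ I will expand $z = \sum c_\pi \pi$ with $\pi \in \sym_\alpha$ and commute each $\pi$ past the higher-degree factor. Because $\sym_\alpha$ preserves the sequence $\bgamma(\alpha)$, it also preserves the ``special form'' set $V^{\bgamma,\alpha}$; after the commutation each summand therefore reduces to a left multiple of $y_{\bgamma'}(w')^* x_\blam$ with $w' \in V^{\bgamma',\alpha}$, which lies in $\overline N^\blam$ by Lemma \ref{lemma: y v x lambda in N bar lambda}. This preservation of $V^{\bgamma,\alpha}$ under commutation by $\sym_\alpha$ is the step I expect to be the main obstacle.

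The $A\wr\sym_\alpha$-equivariance of $\psi$ is then routine: equivariance under $A^{\otimes n}$ is immediate from the definition, and equivariance under $\pi \in \sym_\alpha$ reduces to the identities $\leftexp{\pi}{y^\alpha} = y^\alpha$ (so $y^\alpha \pi = \pi y^\alpha$) and $\leftexp{\pi}{a}\,\pi = \pi a$ in $A\wr\symn$. Once $\Phi$ is obtained, surjectivity is immediate because $C^\blam$ is cyclic (by Theorem \ref{theorem cellularity of wreath products}) with generator $y_\blam + \overline N^\blam = \Phi(1 \otimes (y^\alpha + I^\alpha) \otimes (x_\blam + J^\blam))$. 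Both the induced module and $C^\blam$ are free $R$-modules of rank
\[[\symn:\sym_\alpha]\,|V^\alpha|\prod_i f_{\la\power i} = \binom{n}{\alpha(\blam)}\Big(\prod_i d_i^{|\la\power i|}\Big)\prod_i f_{\la\power i} = |\mathcal T(\blam)|,\]
and a surjection between free $R$-modules of equal finite rank over an integral domain is an isomorphism (via passage to the field of fractions).
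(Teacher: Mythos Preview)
Your proof is correct and takes a genuinely different route from the paper's. The paper defines an explicit $R$--linear map
\[
\varphi\colon \omega \otimes (v y^\alpha + I^\alpha) \otimes (d(\mfs) x_\blam + J^\blam) \longmapsto \omega\, v y^\alpha\, d(\mfs) x_\blam + \overline N^\blam
\]
on the standard basis of the induced module (indexed by distinguished coset representatives $\omega$), checks directly that it carries this basis bijectively onto the basis of $C^\blam$, and only then verifies that $\varphi$ is an $A\wr\symn$--module map by separate, somewhat lengthy computations for the actions of $\pi\in\symn$ and $a\in A^{\otimes n}$. You instead build an $A\wr\sym_\alpha$--map $\psi$ and invoke Frobenius reciprocity to get $\Phi$, so that the module property comes for free and all the work is front--loaded into well--definedness; the isomorphism is then finished off by cyclicity plus a rank count rather than by exhibiting a bijection of bases.

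Both arguments rest on the same technical core: Lemma~\ref{special form of cellularity for tensor product algebra} to control the ``error term'' $z_2$, Lemma~\ref{lemma: y v x lambda in N bar lambda} to push such terms into $\overline N^\blam$, and Lemma~\ref{lemma y alpha J lambda contained in N bar lambda} for the $J^\blam$ part. Your observation that $\sym_\alpha$ preserves the sets $V^{\bgamma,\alpha}$ (up to replacing $\bgamma$ by a permuted sequence $\bgamma'$, still satisfying $\bgamma' > \bgamma(\alpha)$) is exactly what is needed to commute $\pi\in\sym_\alpha$ past the error and still land in the hypotheses of Lemma~\ref{lemma: y v x lambda in N bar lambda}; this is the analogue of the paper's use of the $\sym_\alpha$--invariance of $Z^\alpha$ in its verification that $\varphi$ respects the $A^{\otimes n}$--action. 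One small point you leave implicit: when $i = a y^\alpha \in I^\alpha$, Lemma~\ref{special form of cellularity for tensor product algebra} gives $i = z_1 + z_2$, and you need $z_1 = 0$; this holds because $\{v' y^\alpha + I^\alpha : v' \in V^\alpha\}$ is a basis of $E^\alpha$, so $z_1 \in I^\alpha$ forces all its coefficients to vanish. With that remark your argument is complete, and it is somewhat more economical than the paper's, at the cost of not producing the explicit matching of bases that the paper's $\varphi$ provides.
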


\begin{proof}    
Note that
$$
\Ind_{A\wr\sym_\alpha}^{A\wr \symn}(E^\alpha \otimes \Delta^\blam_R) = 
\bigoplus_\omega  \omega \otimes E^\alpha \otimes \Delta^\blam_R,
$$
where $\omega$ varies through the set of shortest left coset representatives of $\sym_\alpha$ in $\symn$.  
Thus the set of elements
\begin{equation} \label{basis element of Ind E alpha Delta lambda}
\omega \otimes (v y^\alpha + I^\alpha) \otimes (d(\mfs) x_\blam + J^\blam)
\end{equation}
where $\omega$ is such a coset representative, $v \in V^\alpha$, and $\mfs$ is a standard $\blam$--tableau of the initial kind, is a basis of $\Ind_{A\wr\sym_\alpha}^{A\wr \symn}(E^\alpha \otimes \Delta^\blam_R)$.   Note that $v y^\alpha$ is a the unique lifting of $vy^\alpha + I^\alpha$ to an element of the cellular basis of $A^{\otimes n}$ and likewise $d(\mfs) x_\blam$ is the unique lifting of 
$d(\mfs) x_\blam + J^\blam$ to an element of the cellular basis of $R \sym_\alpha$.
Therefore we have a well defined $R$--linear map
$$ \varphi: \Ind_{A\wr\sym_\alpha}^{A\wr \symn}(E^\alpha \otimes \Delta^\blam_R) 
\to  (A \wr \symn)/\overline N^\blam
$$ 
determined by 
$$
\varphi(\omega \otimes (v y^\alpha + I^\alpha) \otimes (d(\mfs) x_\blam + J^\blam)) = 
\omega vy^\alpha d(\mfs) x_\blam + \overline N^\blam.
$$
We have
$$
\omega vy^\alpha d(\mfs) x_\blam + \overline N^\blam =
\omega d(\mfs) \, (\leftexp{d(\mfs)^*} {\hskip -.2 em v}) y^\alpha  x_\blam + \overline N^\blam.
$$
Moreover, $\omega d(\mfs) = d(\omega s)$, the $\blam$--tableau $\omega \mfs$ is   standard, and all standard $\blam$-tableaux  can be written uniquely as  $\omega \mfs$,  where $\omega$ is a distinguished left coset representative of $\sym_\alpha$ in $\symn$ and $\mfs$ is  a standard $\blam$ tableau of the initial kind.   Thus $\varphi$ maps the basis of $\Ind_{A\wr\sym_\alpha}^{A\wr \symn}(E^\alpha \otimes \Delta^\blam_R) $ onto a basis of $C^\blam \subset  (A \wr \symn)/\overline N^\blam$, and thus $\varphi$ is a linear isomorphism with range
$C^\blam$.

It remains to show that $\varphi$ is an $A\wr \symn$--module map.  If $\pi \in \symn$, 
$\pi \,\omega$ can be written uniquely as $\omega' \,\pi'$  for $\omega'$ a distinguished left coset representative of $\sym_\alpha$ in $\symn$,  and $\pi' \in \sym_\alpha$.   Then we have
\begin{equation*} \label{equation check A wreath S n module map 1}
\pi(\omega \otimes (v y^\alpha + I^\alpha) \otimes (d(\mfs) x_\blam + J^\blam)) =
\omega' \otimes (\leftexp{\pi'}{\hskip - .2 em v} y^\alpha + I^\alpha) \otimes \pi' ( d(\mfs) x_\blam + J^\blam)
\end{equation*}
There exist coefficients $r^{\mfs}_\mft \in R$ and an element $h \in J^\blam$  such that
\begin{equation*} \label{equation check A wreath S n module map 2}
\pi' d(\mfs) x_\blam  = \sum_\mft r^{\mfs}_\mft  \, d(\mft) x_\blam + h.
\end{equation*}
Thus
\begin{equation*} \label{equation check A wreath S n module map 3}
\pi(\omega \otimes (v y^\alpha + I^\alpha) \otimes (d(\mfs) x_\blam + J^\blam)) =
\sum_\mft  r^{\mfs}_\mft  (\omega' \otimes (\leftexp{\pi'}{\hskip - .2 em v} y^\alpha + I^\alpha) \otimes (d(\mft) x_\blam + J^\blam)),
\end{equation*}
and
\begin{equation} \label{equation check A wreath S n module map 4}
\varphi(\pi(\omega \otimes (v y^\alpha + I^\alpha) \otimes (d(\mfs) x_\blam + J^\blam)) ) =  \sum_\mft  r^{\mfs}_\mft  \, \omega' (\leftexp{\pi'}{\hskip - .2 em v}) y^\alpha d(\mft) x_\blam + \overline N^\blam.
\end{equation}
On the other hand,
\begin{equation} \label{equation check A wreath S n module map 5}
\begin{aligned}
\pi \varphi(\omega \otimes (v y^\alpha + I^\alpha) \otimes (d(\mfs) x_\blam + J^\blam))
&= \pi  \omega vy^\alpha d(\mfs) x_\blam + \overline N^\blam \\
&= \omega' \pi'  vy^\alpha d(\mfs) x_\blam + \overline N^\blam \\
&=  \omega' (\leftexp{\pi'}{\hskip - .2 em v}) y^\alpha \pi' d(\mfs) x_\blam + \overline N^\blam \\
&=      \omega' (\leftexp{\pi'}{\hskip - .2 em v}) y^\alpha   (\sum_\mft r^{\mfs}_\mft  \, d(\mft) x_\blam + h) + \overline N^\blam \\
&=  \sum_\mft  r^{\mfs}_\mft  \, \omega' (\leftexp{\pi'}{\hskip - .2 em v}) y^\alpha d(\mft) x_\blam + \overline N^\blam,
\end{aligned}
\end{equation}
because $y^\alpha h \in  \overline N^\blam$, by Lemma \ref{lemma y alpha J lambda contained in N bar lambda}. 

Next, for $a \in A^{\otimes n}$, let $a' =  \leftexp{\omega^*}{\hskip -.2 em a}$.  We have
\begin{equation} \label{equation check A wreath S n module map 6}
a(\omega \otimes (v y^\alpha + I^\alpha) \otimes (d(\mfs) x_\blam + J^\blam)) =
\omega \otimes  a'( v y^\alpha + I^\alpha) \otimes ( d(\mfs) x_\blam + J^\blam).
\end{equation}
In the following, let $Z^\alpha$ denote the span of elements $v' y_\bgamma w^*$, where 
$\bgamma > \bgamma(\alpha)$, $v' \in V^\bgamma$, and $w \in V^{\bgamma, \alpha}$.  ($V^{\bgamma, \alpha}$ was defined in Notation \ref{notation Z alpha}.  Note that $Z^\alpha \subseteq I^\alpha$.) 
By Lemma \ref{special form of cellularity for tensor product algebra},  there exist coefficients $r^v_{v'} \in R$ and $z \in Z^\alpha$ such that
$a' v y^\alpha = \sum_{v'} r^v_{v'}\, v' y^\alpha + z$.       From Equation \refc{equation check A wreath S n module map 6}, we have
\begin{equation*} \label{equation check A wreath S n module map 7}
a(\omega \otimes (v y^\alpha + I^\alpha) \otimes (d(\mfs) x_\blam + J^\blam)) =
\sum_{v'} r^v_{v'} \, (\omega \otimes  ( v' y^\alpha + I^\alpha) \otimes ( d(\mfs) x_\blam + J^\blam)).
\end{equation*}
Therefore,
\begin{equation} \label{equation check A wreath S n module map 8}
\varphi(a(\omega \otimes (v y^\alpha + I^\alpha) \otimes (d(\mfs) x_\blam + J^\blam))) =
\sum_{v'} r^v_{v'} \, \omega v' y^\alpha  d(\mfs) x_\blam + \overline N^\blam.
\end{equation}
On the other hand,
\begin{equation} \label{equation check A wreath S n module map 9}
\begin{aligned}
a \varphi(\omega \otimes (v y^\alpha + I^\alpha) \otimes (d(\mfs) x_\blam + J^\blam))
&= a  \omega vy^\alpha d(\mfs) x_\blam + \overline N^\blam \\
&= \omega a'  vy^\alpha d(\mfs) x_\blam + \overline N^\blam \\
&=  \omega\,  (\sum_{v'} r^v_{v'}  v' y^\alpha  + z) d(\mfs) x_\blam + \overline N^\blam \\
&=      \sum_{v'} r^v_{v'} \, \omega v' y^\alpha  d(\mfs) x_\blam + \omega z d(\mfs) x_\blam + \overline N^\blam.
\end{aligned}
\end{equation}
The set $Z^\alpha$ defined above is invariant under place permutations by elements of $\sym_\alpha$, so $z' = \leftexp{d(\mfs)^*}{\hskip - .2 em z}$ is also an element of $Z^\alpha$.  
By Lemma  \ref{lemma: y v x lambda in N bar lambda},  $z' x_\blam \in \overline N^\blam$.  Therefore,
$$
\omega z d(\mfs) x_\blam = \omega d(\mfs) z' x_\blam \in \overline N^\blam.  
$$
Hence, from Equation \refc{equation check A wreath S n module map 9}, we have
\begin{equation} \label{equation check A wreath S n module map 10}
a \varphi(\omega \otimes (v y^\alpha + I^\alpha) \otimes (d(\mfs) x_\blam + J^\blam))
=   \sum_{v'} r^v_{v'} \, \omega v' y^\alpha  d(\mfs) x_\blam  +  \overline N^\blam.
\end{equation}
Comparing Equations \refc{equation check A wreath S n module map 4} 
and \refc{equation check A wreath S n module map 5}  and also Equations 
\refc{equation check A wreath S n module map 8} and 
\refc{equation check A wreath S n module map 10}, we see that $\varphi$ is both an $\symn$--module map and an $A^{\otimes n}$--module map, and therefore an $A \wr \symn$--module map.  
\end{proof}

\section{The $A$--Brauer algebras}
 
In this section, we describe the construction of the $A$--Brauer algebra, where $A$ is an algebra with involution $*$  and with a $*$--invariant trace.   The $A$--Brauer algebra can be regarded as a sort of wreath product of $A$ with the ordinary Brauer algebra, generalizing the wreath product of $A$ with the symmetric group.

The $A$--Brauer algebras   generalize the  $G$--Brauer algebras for abelian groups  $G$ defined in ~\cite{G-Brauer} as well as the  the $A$--Temperley Lieb algebras described in in ~\cite{jones-planar} or ~\cite{Goodman-Graber1}, Section 5.8.  
$G$--Brauer algebras for non--commutative groups are a particular case of the $A$--Brauer algebra construction;   they have not previously appeared in the literature.

\subsection{Algebras with involution--invariant trace}  \label{subsection Algebras with involution--invariant trace}   The definition of the $A$--Brauer algebras, in Section \ref{subsection Definition of the $A$--Brauer algebras}, requires $A$ to be an algebra with an involution $*$ and a $*$--invariant trace.  Moreover, we will show in Section \ref{section: Cellularity of A brauer algebras}
 that if $A$ is a cyclic cellular algebra with a $*$--invariant trace, then the $A$--Brauer algebras $D_n(A)$ are also cyclic cellular.   It is therefore desirable to have examples of cyclic cellular algebras with  a (natural) $*$--invariant trace. 

\begin{example}{\em Hecke algebras of type $A$.}  Let $H_n = H_{n, R}(q)$ denote the Hecke algebra of type $A$,  as in Example \ref{example: Hecke algebra is cyclic cellular}.   For any $z \in R$, there exists a unique $R$--valued trace $\tr_z$ defined on $\cup_{n\ge 1} H_n$ with the properties:
$\tr_z(1) = 1$ and $\tr_z(a T_n b) = z \tr_z(ab)$, for $a, b \in H_n$.    See ~\cite{jones-hecke}, Theorem 5.1.  Such a trace is called  Markov trace.  Note that $\tr_z\circ *$ is another trace with the same properties, and therefore, by uniqueness, $\tr_z$ is $*$--invariant.  When $z = 0$, one obtains the trace $\tr_0$ such that $\tr_0(T_\pi) = 0$ for $\pi \ne 1$ and $\tr_0(1) = 1$.  
\end{example}

\begin{example}{\em  Jones--Temperley--Lieb, Brauer, and BMW algebras.}  Let $A$ denote the $n$--strand Jones--Temperley--Lieb algebra, Brauer algebra or BMW algebra over an integral domain  $R$ with appropriate parameters, and let $\delta_0 \in R$ satisfy $e_i^2 = \delta_0 e_i$ for all $i$.   The algebra $A$ is, or can be realized as, an algebra of diagrams or tangles;  see for example Section 5 of ~\cite{Goodman-Graber1} for a description of the algebras.   The involution $*$ on $A$ acts on diagrams or tangles by flipping over a horizontal line.   $A$ has a natural $*$--invariant trace $\rm Tr$   (the Markov trace) defined by ``closing up diagrams."  The trace on the $n$--strand algebra satisfies
${\rm Tr(1)} = \delta_0^n$.  
\end{example}

\begin{example} {\em Wreath products.}   Suppose that $A$ is an $R$--algebra with involution $*$ and $*$--invariant trace $\tr$, with $\tr(1) = \delta_0$.      We can define a functional $\Tr$  on 
$A\wr \symn$ as  follows:
Let $a = a_1\otimes \cdots \otimes a_n$ be a simple tensor in $A^{\otimes n}$.   
For a $\pi \in \symn$,  let ${\sf o} = (i, \pi(i), \pi^2(i), \dots, \pi^k(i))$ be an orbit for the action of powers of $\pi$ on $\{1, 2, \dots, n\}$.   Let ${\sf o}(a) = \tr(a_{\pi^k(i)} \cdots  a_{\pi(i)}  a_i)$.  
Define  $\Tr(\pi a) =  \prod {\sf o}(a)$, where the product is over the orbits of powers of $\pi$.    One can check that $\Tr$ is a $*$--invariant trace on $A\wr \symn$ such that $\Tr(1) = \delta_0^n$.  
\end{example}

\subsection{Ordinary Brauer diagrams and Brauer algebras}
\label{subsection: Ordinary Brauer diagrams and Brauer algebras}
We review the construction of the ordinary Brauer algebras ~\cite{Brauer, Wenzl-Brauer}.
Let $\mathcal R$ denote the square $[0, 1] \times [0, 1]$.
Fix an increasing sequence $(s_i)_{i \ge 1}$ in the open interval $(0, 1)$.  Let 
$\p i$ denote $(s_i, 1) \in \mathcal R$ and let $\pbar i$ denote $(s_i, 0)$.  

An $(n,n)$--{\em Brauer diagram} (or $n$--strand Brauer diagram) is a graph with vertices $$\{\p 1, \dots \p n\} \cup \{\pbar 1, \dots, \pbar n\},$$ and with $n$ edges connecting the vertices in pairs; i.e., each vertex is adjacent to exactly one edge, and no loops are allowed. In practice, we draw the edges as curves in $\mathcal R$ which intersect the boundary of $\mathcal R$ only at their endpoints.  Two Brauer diagrams are identified if they are the same as graphs with labeled vertices, i.e. if the edges connect the same pairs of vertices. 
A  typical $4$--strand Brauer diagram is given in Figure \ref{figure: brauer diagram}.

The edges in a Brauer diagram will also be referred to as {\em strands}.  The vertices
$\p 1, \dots \p n $ in an $n$--strand Brauer diagram are called the {\em top} vertices and the
vertices $\pbar 1, \dots, \pbar n$ the {\em bottom} vertices.
\begin{figure}[ht]
$$
\inlinegraphic[scale=1]{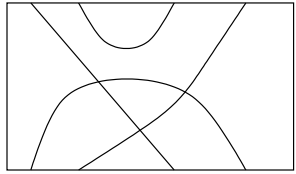}
$$
\caption{A Brauer Diagram}
\label{figure: brauer diagram}
\end{figure}

Let $R$ be a commutative ring with identity and let $\delta \in R$.  We recall the definition of the {\em $n$--strand Brauer algebra} over $R$ with parameter $\delta$, denoted $D_n = D_{n, R}(\delta)$.  
As an $R$--module, $D_n$ is  free  with basis the set of $n$--strand Brauer diagrams.  The product $xy$ of two Brauer diagrams $x$ and $y$ is a certain power of $\delta$ times a third Brauer diagram $z$, determined as follows:
stack $y$ over $x$ and compress the resulting ``tangle" vertically to fit in $\mathcal R$.  The tangle may have some number $c$ of closed loops as well as $n$ strands connecting vertices on the top and bottom edges in pairs.  Let $z$ be the Brauer diagram obtained by removing all the closed loops.  Then $xy = \delta^c z$.  This product extends to a bilinear associative product on $D_n$. 
The identity element of the algebra $D_n$ is the Brauer diagram in which $\p j$ and $\pbar j$ are connected by a strand for each $j$ ($1 \le j \le n$). 
For each Brauer diagram $d$,  let $d^*$ be the diagram obtained by flipping $d$ over a horizontal line.
Then $d \mapsto d^*$ extends to an algebra involution of $D_n$.

A {\em vertical} or {\em through} strand in an ordinary  Brauer diagram is a strand that connects a top vertex and a bottom vertex.  A {\em horizontal} 
strand is one that connects two top vertices or two bottom vertices.   

A Brauer diagram with only vertical strands is called a {\em permutation diagram}.   
The set of permutation diagrams forms a subgroup of the the group of invertible elements in $D_n$,  isomorphic to the symmetric group $\symn$.  For a permutation  $\pi \in D_n$,  $\pi^*$ is the inverse permutation.

\subsection{Order and orientation of edges} \label{subsection order and orientation of edges}

We  will need a standard ordering  and a standard orientation of the edges of a Brauer diagram.  To this end,  order the vertices of the $n$--strand Brauer diagrams by  
$$  \p 1 < \p 2 < \cdots <\p  n < \pbar 1 < \pbar 2 < \cdots < \pbar n. $$
Now we  identify  each edge in a Brauer diagram by its pair of vertices, written in increasing order.  For example, in the Brauer diagram in Figure \ref{figure: brauer diagram}
the edges are $(\p 1, \pbar 3)$,  $(\p 2, \p 3)$, $(\p 4, \pbar 2)$,  and $(\pbar 1, \pbar 4)$.  
Moreover, we  impose
a total order on the set of edges of a Brauer diagram by declaring $(v_1, v_2) < (v_1', v_2')$ if $v_1 < v_1'$.   In the example above, we have
$$
(\p 1, \pbar 3) < (\p 2, \p 3) < (\p 4, \pbar 2) < (\pbar 1, \pbar 4).
$$
The {\em standard orientation} of the edges of a Brauer diagram is from lower labeled vertex to higher labeled vertex.

\subsection{Definition of the $A$--Brauer algebras}  \label{subsection Definition of the $A$--Brauer algebras}
Let $R$ be a commutative unital algebra with distinguished element $\delta$, as in the description of the Brauer algebras.
Let $A$ be a (unital, associative, not necessarily commutative) algebra over $R$.  We suppose that $A$ has an $R$--linear algebra involution $*$  and a $*$--invariant $R$--valued trace $\tr$.   
We suppose that the trace on $A$ is normalized by $\tr(\bm 1) = \delta$,  rather than $\tr(\bm 1) = 1$.

An {\em oriented $A$--labeled Brauer diagram} is a Brauer diagram in which each strand is endowed with an orientation (not necessarily the standard orientation) and labeled by an element of $A$.  We identify two such diagrams if one is obtained from the other by reversing the orientation on some strands and simultaneously changing the $A$--valued label of these strands by applying the involution  $*$.  For example, the diagrams in Figure \ref{figure: oriented A labeled diagram}  are identified.
\begin{figure}[h!]
$$
\inlinegraphic[scale=1.2]{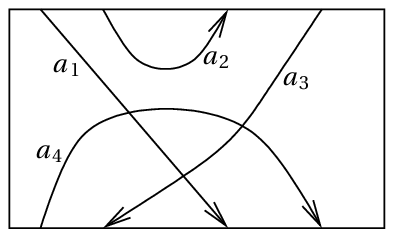} \quad = \quad \inlinegraphic[scale=1.2]{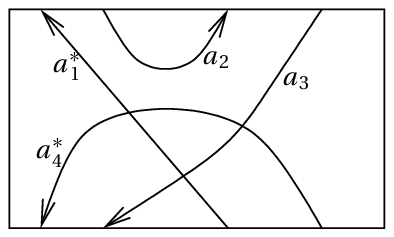}.
$$ 
\caption{Oriented $A$--labeled Brauer diagram}
\label{figure: oriented A labeled diagram}
\end{figure}
We can omit the label on strands which would be labeled by  the multiplicative identity of $A$;  we also don't have to care about the orientation of such strands because ${\bm 1}^* = {\bm 1}$ in $A$. 
We will frequently write ``$A$--Brauer diagram" as a shorthand for ``oriented $A$-labeled Brauer diagram".

As an $R$--module, the {\em $n$--strand $A$--Brauer algebra} 
$D_{n}(A) = D_{n, R, \delta}(A)$  
is  defined to be
\begin{equation}
\label{A Brauer algebra as S module}
D_n(A) = A^{\otimes n} \otimes D_n = \bigoplus_d (A^{\otimes n} \otimes d),
\end{equation}
 where the direct sum  is over  the set of $n$--strand Brauer diagrams.
  We will now explain an identification of simple tensors in $A^{\otimes n} \otimes D_n$ with  $A$--Brauer diagrams.   
We identify a simple tensor  $a_1 \otimes \cdots \otimes a_n \otimes d$,  where $d$ is a Brauer diagram, with the diagram in which the strands of $d$, taken in  increasing order, are labeled by 
$a_1, \dots, a_n$, and each strand of  $d$ is  given the standard orientation.  For example, with $d$ the $4$--strand Brauer diagram in Figure \ref{figure: brauer diagram}, we identify
$a_1 \otimes \cdots \otimes a_4 \otimes d$ with the $A$--Brauer diagram shown in Figure \ref{figure: oriented A labeled diagram}.

\begin{lemma} \label{direct sum principle for A Brauer algebras}
 Let  $P$ be any subset of the set of ordinary $n$--strand Brauer diagrams, and let 
 $\{P_1, \dots, P_k\}$ be any set partition of $P$.  Let $M$ be the span of the $A$--Brauer diagrams whose underlying Brauer diagram lies in $P$,  and for
each $i$, let $M_i$ be the span of the set of $A$--Brauer diagrams whose underlying ordinary Brauer diagram lies in $P_i$.   Then $M = \bigoplus_i M_i$ as $R$--modules.
\end{lemma}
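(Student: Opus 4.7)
The plan is to reduce the statement directly to the $R$-module decomposition already built into the definition of $D_n(A)$ in Equation \refc{A Brauer algebra as S module}, by observing that the equivalence used to identify oriented $A$-labeled Brauer diagrams never mixes different underlying ordinary Brauer diagrams.

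First I would unpack the identification of $A$-Brauer diagrams with elements of $\bigoplus_d (A^{\otimes n} \otimes d)$. Given an arbitrary oriented $A$-labeled Brauer diagram $D$ with underlying ordinary Brauer diagram $d$, reverse the orientation on every strand that is not in the standard orientation and replace its label $a$ by $a^*$. Under the identification rule defining $A$-Brauer diagrams, the resulting diagram equals $D$; and this new diagram is by definition of the form $a_1 \otimes \cdots \otimes a_n \otimes d$, where the strands of $d$ are listed in increasing order. The key point is that both the reversal of orientation and the application of $*$ to labels leave the underlying ordinary Brauer diagram $d$ unchanged. Hence every $A$-Brauer diagram $D$ determines a well-defined underlying ordinary Brauer diagram $d(D)$, and $D$ lies in the single summand $A^{\otimes n} \otimes d(D)$.

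Now for each subset $Q$ of ordinary $n$-strand Brauer diagrams let $N(Q)$ denote the $R$-span of all $A$-Brauer diagrams $D$ with $d(D) \in Q$. By the previous paragraph,
\[
N(Q) = \bigoplus_{d \in Q} (A^{\otimes n} \otimes d),
\]
because elements in distinct summands $A^{\otimes n} \otimes d$ of \refc{A Brauer algebra as S module} are $R$-linearly independent. Applying this with $Q = P$ gives $M = N(P) = \bigoplus_{d \in P}(A^{\otimes n} \otimes d)$, and with $Q = P_i$ gives $M_i = N(P_i) = \bigoplus_{d \in P_i}(A^{\otimes n} \otimes d)$.

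Since $\{P_1, \dots, P_k\}$ is a set partition of $P$, the index set of the direct sum defining $M$ is partitioned by the index sets of the direct sums defining the $M_i$. Therefore
\[
M \;=\; \bigoplus_{d \in P} (A^{\otimes n} \otimes d) \;=\; \bigoplus_{i=1}^{k} \Bigl( \bigoplus_{d \in P_i} (A^{\otimes n} \otimes d) \Bigr) \;=\; \bigoplus_{i=1}^{k} M_i,
\]
which is the desired conclusion. The only conceptual step is the first one — verifying that the equivalence relation defining oriented $A$-labeled Brauer diagrams respects the underlying ordinary Brauer diagram; once this is in hand, everything else is a bookkeeping reorganization of the defining direct sum. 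There is no real obstacle, since the lemma is essentially a coordinate-free restatement of \refc{A Brauer algebra as S module}.
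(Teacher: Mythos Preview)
Your proof is correct and follows essentially the same approach as the paper: identify $M$ with $\bigoplus_{d \in P}(A^{\otimes n}\otimes d)$ and each $M_i$ with $\bigoplus_{d \in P_i}(A^{\otimes n}\otimes d)$ via the simple-tensor description in Equation~\refc{A Brauer algebra as S module}, and then use that $\{P_i\}$ partitions $P$. The paper's own proof is a two-line version of exactly this; your extra paragraph making explicit that the orientation/label equivalence preserves the underlying ordinary Brauer diagram is a welcome clarification but not a different argument.
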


\begin{proof} Using the identification of $A$--Brauer diagrams with simple tensors, we have
$M = \bigoplus_{d \in P}  (A^{\otimes n} \otimes d)$ and 
$M_i = \bigoplus_{d \in P_i}  (A^{\otimes n} \otimes d)$.   Since $\{P_i\}$ is a set partition of $P$,  the result follows.
\end{proof}

Next we describe the product of two $A$--Brauer diagrams; the product is  another $A$--Brauer diagram.  Given two $A$--Brauer diagrams $X$ and $Y$ with $n$ strands, to form the product
$XY$,  first stack $Y$ over $X$ as for ordinary Brauer diagrams.   The resulting ``tangle" may contain some closed strands as well as $n$  non--closed strands connecting two vertices;   each strand is composed of one or more strands from $X$ and $Y$.  Give each composite strand $s$  an orientation, arbitrarily;   for non--closed strands we can take the standard orientation, but for closed strands there is no preferred orientation.   Make the orientations of the component strands of $s$ from $X$ and $Y$  agree with the chosen orientation of $s$,  changing labels by applying $*$ whenever the orientation of a component strand is reversed.  Label each strand by the product of the labels of its component strands, written from right to left in the order in which the labels are encountered as the strand is traversed according to its chosen orientation.   For closed strands, this product is defined only up to cyclic permutation of the factors, since there is no preferred starting point for traversing the strand.

 Finally, remove any $A$--labeled closed strands, but for each such strand multiply the diagram by the $R$--valued trace of its label;  since the trace of a product  is invariant under cyclic permutation of  the factors, this factor in $R$ is well--defined.   Moreover, the element of  $R$ obtained is independent of the choice of orientation of the closed strand; 
 if the orientation of a closed strand is reversed, its label is replaced by $*$ applied to the old label, and, since the trace is $*$--invariant, again the factor in $R$ is unchanged.  
 
 So far the product of two $A$--Brauer diagrams has been described as a multiple of a third, $XY = \alpha Z$,  where $\alpha \in R$,  but using the identification of 
 $A$--Brauer diagrams with simple tensors, the factor $\alpha$ can be absorbed into the $A$--valued label of any strand of $Z$.

 \ignore{The  $A$--Brauer diagrams displayed in Figure \ref {some A labeled Brauer diagrams} will be employed in several examples below.}
 \begin{figure}[h!]
\begin{equation*}
 X = \inlinegraphic[scale=1.1]{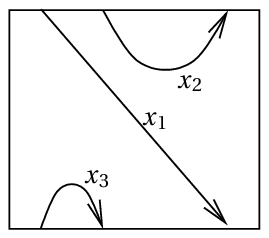}, \quad  
 X' = \inlinegraphic[scale=1.1]{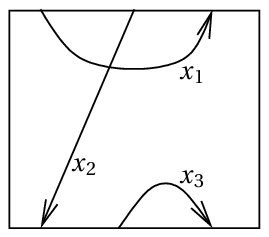},  \quad
 Y = \inlinegraphic[scale=1.1]{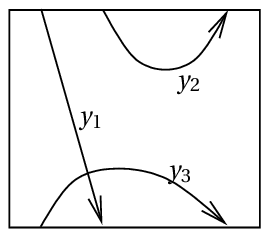}.
\end{equation*}
\caption{Some $A$--Brauer diagrams}
 \label{some A labeled Brauer diagrams}
\end{figure}

 Here are two examples of the product of $A$--Brauer diagrams.  Let $X$, $X'$, and $Y$ be as 
 in Figure \ref{some A labeled Brauer diagrams}.  Then 
 $$
 XY =   \inlinegraphic[scale=1.1]{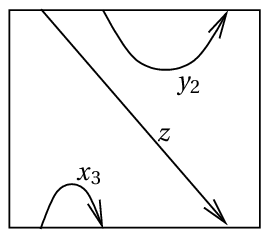}, \text{ with  $z = x_1 y_3^* x_2 y_1$,}
 $$
and
 $$
 X'Y =  \alpha  \inlinegraphic[scale=1.1]{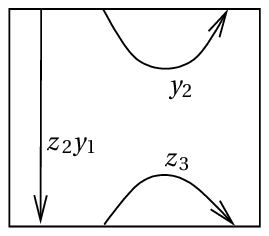} = \inlinegraphic[scale=1.1]{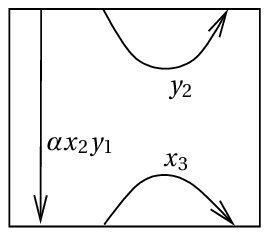},
 \text{ where $\alpha = \tr(y_3^* x_1) = \tr(x_1^*  y_3)$.}
 $$

 Fix two Brauer diagrams $x$ and $y$, and let $z$ be the Brauer diagram obtained by stacking
 $y$ over $x$ and removing closed loops.    The procedure described above, applied to $A$--Brauer diagrams $X$ and $Y$ with underlying Brauer diagrams $x$ and $y$, defines a multilinear map  $A^{2n} \to  A^{\otimes n} \otimes z$, hence a bilinear map
 $$ (A^{\otimes n} \otimes x) \times ( A^{\otimes n} \otimes y) \to  A^{\otimes n} \otimes z.$$
 Taking Equation (\ref{A Brauer algebra as S module}) into account, we see that the product extends to a bilinear product $D_n(A) \times D_n(A) \to  D_n(A)$.  One can check that this product is associative.

 For each oriented $A$--labeled  Brauer diagram $X$,  let $X^*$  be the diagram obtained by flipping
 $X$ over a horizontal line.  For example,  with 
 $X$ as in Figure \ref{some A labeled Brauer diagrams},
  we have
 $$
 X^* = \inlinegraphic[scale=1.1]{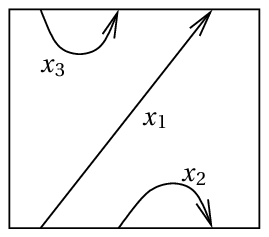} = \inlinegraphic[scale=1.1]{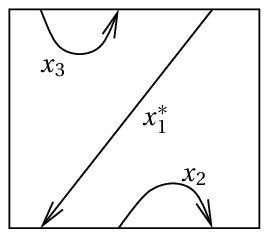}.
 $$
 
 \begin{lemma} $X \mapsto X^*$  extends to an $R$--linear algebra involution on $D_n(A)$.   
 \end{lemma}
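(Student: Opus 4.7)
The plan is to verify, in order, that (i)~the flip operation is well defined on equivalence classes of oriented $A$-labeled Brauer diagrams, (ii)~it extends $R$-linearly to $D_n(A)$, (iii)~it is an involution, and (iv)~it reverses products. Only (iv) requires substantial work.

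For (i), I would observe that flipping a diagram $X$ over a horizontal line does not alter which pairs of vertices are connected by strands, nor the intrinsic orientation datum of each strand, nor the $A$-label of each strand; it merely reinterprets each top vertex $\p i$ as a bottom vertex $\pbar i$ and conversely. In particular, the equivalence relation that allows one to reverse the orientation of a strand while replacing its label $a$ by $a^*$ is respected by the flip, so $*$ descends to a well-defined map on $A$-Brauer diagrams. Part (ii) is then immediate from the $R$-linearity of $*$ on $A$ and the decomposition \refc{A Brauer algebra as S module}, and part (iii) is clear since flipping twice restores the original diagram.

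The heart of the argument is (iv), namely $(XY)^* = Y^*X^*$ for any two $A$-Brauer diagrams $X$ and $Y$; this will follow once it is verified for simple tensors, and bilinearity will take care of the general case. Let $x,y$ be the underlying ordinary Brauer diagrams of $X,Y$. Since $(xy)^* = y^*x^*$ in $D_n$ (a known fact about the ordinary Brauer algebra, recorded in Section \ref{subsection: Ordinary Brauer diagrams and Brauer algebras}), the two sides of (iv) have the same underlying Brauer diagram $z$. It therefore suffices to match, for each non-closed strand of $z$, the $A$-labels produced by the two recipes, and separately to match the scalar factors coming from closed loops. I would proceed strand by strand. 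A non-closed strand $s$ of $z$ arises from a chain of component arcs drawn alternately from $Y$ (on top) and $X$ (below); traversing $s$ in a chosen orientation in the computation of $XY$ produces, in the prescribed right-to-left order, a product $a_k\cdots a_2 a_1$ of labels (with $*$'s inserted where a component arc's orientation had to be reversed to agree with the chosen orientation of $s$). In the computation of $Y^*X^*$, the corresponding strand arises from the same arcs now situated in $X^*$ on top and $Y^*$ below; traversing it in the opposite orientation, labels are encountered in the reverse order, and the identity $(a_k\cdots a_1)^* = a_1^*\cdots a_k^*$ in $A$, together with the equivalence $\sim$ on oriented labeled strands, shows that the resulting label agrees with the label of the flipped strand in $(XY)^*$. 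For each closed loop the scalar factor $\tr(a_k\cdots a_1)$ produced in $XY$ matches the corresponding factor in $Y^*X^*$ by the $*$-invariance and cyclicity of $\tr$, which together absorb both the orientation reversal and the flipped ordering.

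The main obstacle is essentially notational: one must set up bookkeeping for the orientation choices on composite strands and for the order in which $A$-labels are multiplied, so that the identity $(ab)^* = b^*a^*$ in $A$, the $*$-invariance of $\tr$, and the cyclicity of $\tr$ combine transparently to give the anti-homomorphism property. Once this bookkeeping is in place the verification is a direct strand-by-strand comparison of $(XY)^*$ with $Y^*X^*$, and bilinearity in each argument completes the proof.
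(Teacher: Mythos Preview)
Your proposal is correct and supplies exactly the kind of direct verification the paper has in mind: the paper's own proof consists of the single word ``Straightforward,'' so your strand-by-strand argument for $(XY)^*=Y^*X^*$, together with the routine checks of well-definedness, $R$-linearity, and $*\circ *=\mathrm{id}$, is precisely the content being left to the reader.
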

 
 \begin{proof}   Straightforward.
 \end{proof}
 
  \begin{remark}\label{embedding of ordinary Brauer algebra}  It follows from Equation \refc{A Brauer algebra as S module},   the rule for multiplying $A$--Brauer diagrams, and the definition of the involution $*$,  that the ordinary $n$--strand Brauer algebra $D_n$ imbeds as a unital $*$--subalgebra  of $D_n(A)$,  spanned by  unlabeled Brauer diagrams.
 In particular the symmetric group $\symn$ can be identified with the set of unlabeled permutation diagrams in $D_n(A)$.  $D_n(A)$ is thus an $\symn$--bimodule, with the left and right actions of the symmetric group given by left and right multiplication by permutation diagrams.
 \end{remark}

 \begin{remark}  $D_1(A) \cong A$ as algebras with involution and trace.    It is convenient to define $D_0(A)$ to be $R$ with the trivial involution.
 \end{remark}
 
Let $e_j$ and $s_j$ denote the $(n, n)$--Brauer diagrams:
$$
e_j =  \inlinegraphic[scale=.7]{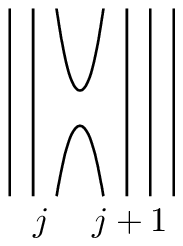}\qquad
s_j =  \inlinegraphic[scale= .7]{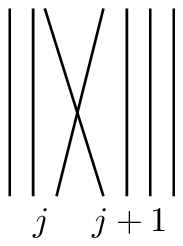} 
$$
 For $a \in A$ and $1 \le i \le n$, let $a^{(i)}$  be the  $A$--Brauer diagram whose underlying Brauer diagram is the identity diagram, with strands oriented from top to bottom, with the $i$--th strand labeled by $a$.   
 
 \begin{lemma} \label{lemma: generators of D n of A}
 $D_n(A)$ is generated as a unital algebra by the ordinary Brauer diagrams
 $e_i$ and $s_i$  for $1 \le i \le n-1$, and by the   diagrams $a^{(1)}$  for $a \in A$.  
 One has $e_i^* = e_i$,  $s_i^* = s_i$ and $(a^{(1)})^* = (a^*)^{(1)}$.  
 \end{lemma}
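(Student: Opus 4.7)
The involution identities I would handle first, since they are immediate from the definition of $*$ on $D_n(A)$ as the horizontal flip of diagrams. The diagrams $e_i$ and $s_i$ are symmetric under this flip, giving $e_i^* = e_i$ and $s_i^* = s_i$. Flipping $a^{(1)}$ produces the identity diagram whose first strand is labeled $a$ with its standard orientation reversed; invoking the identification convention that reversing a strand's orientation conjugates its label by $*$, this coincides with $(a^*)^{(1)}$.

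For generation, let $B$ denote the unital subalgebra of $D_n(A)$ generated by $\{e_i, s_i : 1 \le i \le n-1\}$ together with $\{a^{(1)} : a \in A\}$. By Remark \ref{embedding of ordinary Brauer algebra}, the ordinary Brauer algebra $D_n$ sits inside $D_n(A)$ as a unital $*$-subalgebra, and it is classical (see \cite{Wenzl-Brauer}) that $D_n$ is generated over $R$ by the $e_i$'s and $s_i$'s. Hence $D_n \subseteq B$; in particular every unlabeled $n$-strand Brauer diagram lies in $B$, as does the permutation subgroup $\symn \subseteq D_n(A)$. Conjugating $a^{(1)}$ by the transposition swapping positions $1$ and $i$ (itself in $\symn \subseteq B$) yields $a^{(i)}$, so $B$ contains $a^{(i)}$ for every $i$ and every $a \in A$.

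To finish I plan to show that an arbitrary $A$-Brauer diagram $X$ lies in $B$. Let $d$ be the underlying ordinary Brauer diagram of $X$, with labels $a_1, \dots, a_n$ on its strands in the standard order of Section \ref{subsection order and orientation of edges}. I will exhibit $X$ as a product $L \cdot d \cdot R$, where $L$ and $R$ are each products of elements of the form $b^{(j)}$, and therefore lie in $B$. The idea is that each strand of $d$ absorbs its label either from above (via $R$) or from below (via $L$): for a strand $e$ of $d$ incident to a top vertex $\p i$, I place a factor at position $i$ in $R$; for a strand with both endpoints on the bottom row, say at $\pbar j$ and $\pbar k$, I place a factor at position $j$ in $L$. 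The choice between $a_e$ and $a_e^*$ is forced by matching the standard orientation of the composite strand in the product with the direction in which the labeled segment is traversed. Since the elements $b^{(j)}$ for distinct $j$ commute inside $D_n(A)$, the products $L$ and $R$ are unambiguously defined, and the identity $L \cdot d \cdot R = X$ will follow from a direct diagrammatic computation. This places $X$ in $B$ and finishes the proof.

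The main obstacle is this last verification: tracking orientations and label placement as strands of $L$, $d$, and $R$ are concatenated under the $A$-Brauer multiplication rule. It splits naturally into three cases according to whether a strand of $d$ is through, top-horizontal, or bottom-horizontal. In each case the bookkeeping is controlled by the orientation-reversal convention and by the identity ${\bm 1}^* = {\bm 1}$, which allows any unlabeled segment in $L$ or $R$ to be traversed in either direction without altering the product of labels along a composite strand.
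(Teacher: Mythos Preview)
Your proposal is correct; the paper's own proof is simply the word ``Straightforward,'' so you have supplied the details the authors omitted. Your factorization $X = L \cdot d \cdot R$ with $L, R$ products of elements $b^{(j)}$ is the natural one (and mirrors the factorizations the paper uses later, e.g.\ in Lemma~\ref{lemma:  factorization of V 2f 0}); the only minor imprecision is that for a top-horizontal strand you should specify which of its two top vertices you use as the index in $R$, but either consistent choice works.
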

 
  \begin{proof}   Straightforward.
 \end{proof}

 \begin{remark} \label{remark basis of A Brauer algebra}
  If $A$ is free as an $R$--module, then $D_n(A) = A^{\otimes n} \otimes D_n$ is also free as an $R$--module, with basis consisting of $n$--strand Brauer diagrams whose strands are given the standard orientation and labeled by basis elements of $A$. 
 \end{remark}
 
\subsection{Filtration by rank, and the embedding of the wreath product $A \wr \symn$}   The {\em rank} of an ordinary or $A
$--labeled Brauer diagram is the number of through strands.  
Note that the rank of an $n$--Brauer diagram has the same parity as $n$.
It follows immediately from the definition of 
the multiplication that for two $A$--Brauer diagrams $X$ and $Y$,  $\rank(XY) \le \min\{\rank (X), \rank (Y)\}
$.  Hence if we write $W_r^n$ for the span of $A$--Brauer diagrams with rank no more than $r$ in $D_n(A)$,  then 
$W_r^n$ is an ideal and
 $$
 (0) \subset W_0^n \subset W_1^n \subset \cdots \subset W_n^n = D_n(A).
 $$
 Write $V_s^n$ for the span of $A$--Brauer diagrams with rank exactly $s$ in $D_n(A)$.  
 
 \begin{lemma} \label{direct sum by rank}
$D_n(A) = \bigoplus_s V_s^n$,  and $W_r^n = \bigoplus_{s \le r} V_s^n$, as $R$--modules.
 \end{lemma}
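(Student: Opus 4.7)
The plan is to deduce both statements as immediate consequences of Lemma \ref{direct sum principle for A Brauer algebras} (the ``direct sum principle'' for $A$-Brauer algebras). The key observation is that the rank of an $A$-Brauer diagram depends only on its underlying ordinary Brauer diagram, since the rank counts through strands and is unaffected by orientations and $A$-valued labels. In particular, under the identification $a_1 \otimes \cdots \otimes a_n \otimes d \;\leftrightarrow\;$ (the $A$-labeled diagram built from $d$), the $R$-submodule $V_s^n$ consists of precisely those linear combinations supported on the summands $A^{\otimes n} \otimes d$ for which $d$ has rank exactly $s$.

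With this in hand, first I would let $P$ be the set of all $n$-strand ordinary Brauer diagrams and partition it as $P = \bigsqcup_s P_s$, where $P_s$ is the subset of diagrams of rank exactly $s$. Applying Lemma \ref{direct sum principle for A Brauer algebras} to this partition, with $M = D_n(A)$ (the span of all $A$-Brauer diagrams) and $M_s = V_s^n$ (the span of those whose underlying diagram lies in $P_s$), yields $D_n(A) = \bigoplus_s V_s^n$.

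For the second statement, I would instead take $P = \bigsqcup_{s \le r} P_s$, so that the span of $A$-Brauer diagrams with underlying diagram in $P$ is by definition $W_r^n$. The same partition $\{P_s\}_{s \le r}$ of $P$ and another invocation of Lemma \ref{direct sum principle for A Brauer algebras} give $W_r^n = \bigoplus_{s \le r} V_s^n$, completing the proof.

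There is no real obstacle here; the only thing one must verify explicitly is that rank is a well-defined function on $A$-Brauer diagrams (independent of the choice of representative under the orientation-reversal equivalence), which is clear because reversing orientation preserves the endpoints of each strand and hence the set of through strands. Everything else is a bookkeeping application of the previously established direct sum principle.
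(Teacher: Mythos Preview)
Your proposal is correct and matches the paper's approach exactly: the paper's proof is a single sentence invoking Lemma~\ref{direct sum principle for A Brauer algebras}, and your argument simply spells out the relevant partition of the set of ordinary Brauer diagrams by rank.
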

 
 \begin{proof}  This follows from Lemma \ref{direct sum principle for A Brauer algebras}.
 \end{proof}
 
 \begin{lemma}  \label{identification of V n n with wreath product algebra}
 $V_n^n$ is a $*$--subalgebra of $D_n(A)$ isomorphic to the wreath product algebra $A \wr \symn$, as algebras with involution.
 \end{lemma}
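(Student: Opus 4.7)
The plan is to construct an explicit $R$-algebra homomorphism $\Phi\colon A \wr \symn \to V_n^n$ and to verify that it is a bijective $*$-map. First I would check the closure properties of $V_n^n$: the composition of two $A$-labeled permutation diagrams produces no horizontal arcs and no closed loops, so every composite strand is a through strand; flipping a permutation diagram over a horizontal line again produces a permutation diagram. Hence $V_n^n$ is closed under both the multiplication and the involution of $D_n(A)$.

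Next I would exhibit two natural $*$-subalgebras of $V_n^n$ that together generate it. Let $\iota_A\colon A^{\otimes n} \hookrightarrow V_n^n$ send $a_1 \otimes \cdots \otimes a_n$ to the $A$-Brauer diagram whose underlying Brauer diagram is the identity and whose $i$-th strand is labeled $a_i$, and let $\iota_\sigma\colon R\symn \hookrightarrow V_n^n$ be the embedding of Remark~\ref{embedding of ordinary Brauer algebra}, sending $\pi \in \symn$ to the corresponding unlabeled permutation diagram. Both are injective $*$-algebra homomorphisms by direct inspection of the product and involution rules. The crucial computation is the covariance identity
$$
\iota_\sigma(\pi)\,\iota_A(a) \;=\; \iota_A(\leftexp{\pi}{a})\,\iota_\sigma(\pi) \qquad (\pi \in \symn,\ a = a_1\otimes\cdots\otimes a_n),
$$
which I would establish by a short diagrammatic calculation: in both stackings the composite strand runs from $\p i$ to $\pbar{\pi(i)}$ and carries the label $a_i$ --- on the left because $\iota_A(a)$ sits on top and contributes $a_i$ at $\p i$, and on the right because the permutation first routes $\p i$ to $\p{\pi(i)}$, where the bottom layer contributes $(\leftexp{\pi}{a})_{\pi(i)} = a_i$.

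By the universal property of the semidirect product, the pair $(\iota_A, \iota_\sigma)$ extends uniquely to an $R$-algebra homomorphism $\Phi$ with $\Phi((a_1\otimes\cdots\otimes a_n)\pi) = \iota_A(a_1\otimes\cdots\otimes a_n)\,\iota_\sigma(\pi)$. That $\Phi$ intertwines the two involutions follows from the explicit formula $((a_1\otimes\cdots\otimes a_n)\pi)^* = \leftexp{\pi^{-1}}{(a_1^*\otimes\cdots\otimes a_n^*)}\pi^{-1}$ together with the fact that $\iota_A, \iota_\sigma$ are $*$-maps and the covariance identity applied to $\pi^{-1}$ and $a^*$. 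Surjectivity of $\Phi$ is clear because every $A$-labeled permutation diagram factors as $\iota_A(a)\,\iota_\sigma(\pi)$ for suitable $a,\pi$. For injectivity I would compare the $R$-module decompositions $A \wr \symn = A^{\otimes n} \otimes R\symn$ (by the definition of the semidirect product) and $V_n^n = \bigoplus_{\pi\in\symn}(A^{\otimes n} \otimes [\pi])$ (from Equation~\eqref{A Brauer algebra as S module} and Lemma~\ref{direct sum by rank}); for each fixed $\pi$ the map $a \mapsto \iota_A(a)\,\iota_\sigma(\pi)$ is an $R$-linear bijection onto $A^{\otimes n}\otimes [\pi]$, being merely a permutation of the tensor factors. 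The only real obstacle is the combinatorial bookkeeping in the diagrammatic product; once the covariance identity is in hand, the rest is formal.
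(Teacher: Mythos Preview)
Your proposal is correct and follows essentially the same approach as the paper; the paper's proof simply asserts that $V_n^n$ is closed under multiplication and $*$, notes the $R$-module decomposition $V_n^n = \bigoplus_\pi A^{\otimes n}\otimes\pi$, and declares the remaining verification ``straightforward.'' You have carried out that straightforward verification explicitly---the covariance identity $\iota_\sigma(\pi)\iota_A(a)=\iota_A(\leftexp{\pi}{a})\iota_\sigma(\pi)$ is precisely the content of the claim that the $R$-module isomorphism respects multiplication---so there is nothing to add.
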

 
 \begin{proof}  $V_n^n$ is closed under multiplication and under the involution $*$.  Moreover,  $V_n^n$ is isomorphic to $A \wr \symn$ as  $R$--modules, since $V_n^n = \bigoplus_\pi  A^{\otimes n} \otimes \pi$,  where the direct sum is over permutation diagrams.  It is straightforward to check that the isomorphism of $R$--modules $V_n^n \cong A \wr \symn$ also respects multiplication and involution.
 \end{proof}
 
 \subsection{Examples: the $G$--Brauer algebras}  Throughout this section, let $R$ be a commutative ring with distinguished element $\delta$ as above.
 
 Let $G$ be an arbitrary group. There is a canonical algebra involution $*$  on the group algebra
$A = RG$, namely the linear extension of the inverse map $g \mapsto g\inv$ on $G$.   The $R$--valued trace functions on $A = RG$ correspond to class functions on $G$;  a trace is $*$--invariant when $\tr(g\inv) = \tr(g)$ for all $g \in G$.  According to our convention, we have to normalize such a class function by $\tr(\bm 1) = \delta$.  For any such choice of a trace function, we can form the $A$--Brauer algebras with $A = RG$.   Since $A$ is free as an $R$--module with basis $G$,
by Remark \ref{remark basis of A Brauer algebra},  the $A$--Brauer algebra  $D_n(A)$  is free as an $R$--module with basis the set of $G$--labeled oriented $n$--strand Brauer diagrams (with two such diagrams identified if one is obtained from the other by reversing the orientation of some strands and simultaneously inverting the $G$--valued label of the strand).
We call these algebras $G$--Brauer algebras.

When $G$ is an abelian group, there is another choice for the involution on 
$A = RG$,  namely the identity map.  With this choice of involution, any class function on $G$ yields an (involution--invariant) trace on $A = RG$; again, we have to normalize the trace by $\tr(\bm 1) = \delta$.  For any choice of the trace, we can form the $A$--Brauer algebras.  Since the involution on $A$ is trivial, the orientation of the strands of $A$--labeled Brauer diagrams becomes irrelevant.  Thus the $A$--Brauer algebra $D_n(A)$ is free as an $R$--module with basis the set of {\em unoriented} $G$--labeled $n$--strand Brauer diagrams.
These $G$--Brauer algebras are the same as those defined by  Parvathi and Savithri  ~\cite{G-Brauer}.

Thus when $G$ is abelian, we have two slightly different constructions of $G$--Brauer algebras.  The Parvathi--Savithri construction is the more interesting one for this paper, because $A = RG$ is a cyclic cellular algebra with the trivial involution, assuming $R$ has sufficiently many roots of unity, while  $A = RG$ cannot be cyclic cellular with any non--trivial involution, by Proposition \ref{proposition characterization of abelian s cellular algebras}. 

Note that for any $G$,  abelian or not, there is a canonical involution--invariant trace  on $A = RG$ given by
$\tr(g) = 0$ if $g \ne \bm 1$ and $\tr(\bm 1) = \delta$.

 \subsection{Inclusion, closure, and trace}
 For $n \ge 1$,  and for $X$ an $A$--Brauer diagram with $n$ strands,  let $\iota(X)$ be 
 the diagram obtained by adding an unlabeled strand  to $X$, connecting $\p {n+1}$ to
$\pbar {n+1}$.
 $$
\iota: \quad \inlinegraphic{tangle_box2} \quad \mapsto \quad 
\inlinegraphic{iota}
$$
Then $\iota$ determines a unital  algebra homomorphism $\iota : D_n(A) \to D_{n+1}(A)$. 
 
 For $n \ge 1$ define a map     $\cl$  from  oriented  $A$--labeled Brauer diagrams with $n$ strands into $D_{n-1}(A)$ as follows.  First ``partially close" a given $A$--Brauer diagram $X$ by adding an additional smooth curve connecting $\p n$ to $\pbar n$,
$$
 \inlinegraphic{tangle_box2} \quad \mapsto \quad 
\inlinegraphic{partial_closure}.
$$
The resulting ``tangle" contains a closed curve  precisely when the original diagram already had a strand connecting $\p n$ to $\pbar n$;   we can suppose  that this strand is oriented from $\p n$ to $\pbar n$ and labeled by $a \in A$;  in this case, 
remove this loop and replace it with a factor of $\tr(a)$.    Otherwise, the added curve is part of a composite strand containing two strands from $X$;  compute the $A$--valued label of this composite strand as in the rule for multiplying diagrams.   The map $\cl$  on oriented  $A$--labeled diagrams determines an $R$--linear map
$\cl : D_n(A) \to D_{n-1}(A)$.  Moreover, $\cl$ is a (non--unital)  $D_{n-1}(A)$---$D_{n-1}(A)$  bimodule map, and $\cl \circ * = *\circ \cl$ for all $n$.     Note that $\cl: D_1(A) \to D_0(A) = R$ is just the trace on $D_1(A) \cong A$. 
For $X$ an $A$--Brauer diagram with $n$ strands,  we have
$$
\cl( \iota(X) e_n) = X;
$$
it follows that $\iota$ is injective, so we can regard $D_n(A)$ as a unital subalgebra of $D_{n+1}(A)$.   
 
 We define $\Tr: D_n(A) \to R$ by $\Tr = \cl\circ  \cdots \cl \circ \cl$.    Then, as for other diagram or tangle algebras, $\Tr$ is a trace.   It follows from the properties of $\cl$ discussed above that
 $\Tr$ is $*$--invariant.

\subsection{The $A$--Brauer category}  We can imbed the $n$--strand $A$--Brauer algebras for all $n$ in a category as follows.  The category $\mathcal B_A$  has as objects the integers $0, 1, 2, \dots$.    

To describe the morphisms in the category, first we describe $(k, \ell)$--Brauer diagrams, where $k$ and $\ell$ have the same parity.  These are are ``Brauer diagrams" with $k$ upper vertices $\p1, \dots, \p k$ and $\ell$ lower vertices $\pbar 1, \dots, \pbar \ell$, with the vertices connected in pairs by edges.  We order the vertices of $(k,\ell)$--Brauer diagrams by
$$  \p 1 < \p 2 < \cdots <\p  k < \pbar 1 < \pbar 2 < \cdots < \pbar \ell,$$
and we use this ordering to define a total ordering of the strands of $(k,\ell)$--Brauer diagrams, and a standard orientation of strands, as in Section \ref{subsection order and orientation of edges}.

Let $D_{k, \ell}$ be the free $R$--module with basis the set of $(k,\ell)$--Brauer diagrams.

When $k$ and $\ell$ do not have the same parity, we take $\Hom(k, \ell) = (0)$. When  $k$ and $\ell$   have the same parity we set 
\begin{equation} \label{hom spaces direct sum as R modules}
D_{k, \ell}(A) = \Hom(k, \ell) = A^{\otimes (k+\ell)/2} \otimes D_{k,\ell} = \bigoplus_d A^{\otimes (k+\ell)/2} \otimes d,
\end{equation}
where the direct sum runs over all  $(k, \ell)$--Brauer diagrams.
  We identify simple tensors in $\Hom(k, \ell) = A^{\otimes (k+\ell)/2} \otimes D_{k,\ell}$ with oriented $A$--labeled $(k, \ell)$--Brauer diagrams.  This is done in the same way as was described previously in the case $k = \ell$, in the description of the $k$--strand $A$--Brauer algebra.

  \begin{remark} \label{remark basis of hom spaces}
  As in Remark \ref{remark basis of A Brauer algebra}, if $A$ is free as an $R$--module, then each Hom space $D_{k, \ell}(A) = \Hom(k, \ell)$ is also a free $R$--module with basis consisting of $(k, \ell)$--Brauer diagrams with strands endowed with the standard orientation and labeled by basis elements of $A$.  
  \end{remark}

If $X \in \Hom(\ell, m)$ and $Y \in \Hom(k, \ell)$ are $A$--Brauer diagrams, then we can form the product $X Y$,  which is an oriented $A$--labeled $(k, m)$--Brauer diagram.  The definition of the product is essentially the same as the definition of the product in $D_n(A)$ given previously.  The product extends to a bilinear product
 $\Hom(\ell, m) \times  \Hom(k, \ell) \to  \Hom(k, m)$.  Moreover, the product is associative; i.e., if $Z$ is an $A$--Brauer diagram in $\Hom(j, k)$, then
 $(XY) Z = X(YZ)$ in $\Hom(j, m)$.   Thus $\mathcal B_A$ is a category whose Hom spaces are $R$--modules, with bilinear composition of morphisms.  Note that the algebra $\End(n)$ is isomorphic to $D_n(A)$. 
 
 For an oriented $A$--labeled $(k, \ell)$--Brauer diagram $X$, let $X^*$ be the oriented $A$--labeled $(\ell, k)$--Brauer diagram obtained by flipping $X$ over a horizontal axis.  Then $*$ determines a linear map from $\Hom(k, \ell)$ to $\Hom(\ell, k)$  for each $k$ and $\ell$; moreover, $*$ is a contravariant functor from $\mathcal B_A$ to itself; that is, $(XY)^* = Y^*X^*$, whenever the product $XY$ is defined.
 
 The category $\mathcal B_A$ has a tensor or monoidal structure, described as follows. The tensor product of objects is $n \odot m = n + m$.   Given oriented $A$-labeled Brauer diagrams $X \in \Hom(k, \ell)$ and $Y \in \Hom(m, n)$, their tensor product
 $X \odot Y \in \Hom(k + m, \ell + n)$ is given by horizontal juxtaposition of diagrams.
Fix ordinary Brauer diagrams $x \in D_{k, \ell}$ and $y \in D_{m,n}$ and let $z$ be the
$(k + m, \ell + n)$--Brauer diagram obtained by horizontal juxtaposition of $x$ and $y$.  Then $(X, Y) \mapsto X \odot Y$,  
applied to $A$--Brauer diagrams $X$ and $Y$ with underlying Brauer diagrams $x$ and $y$, defines a multilinear map from 
$$A^{(k + \ell + m + n)/2} \to
A^{\otimes (k + \ell + m + n)/2} \otimes z,
$$
 hence a bilinear map 
$$
(A^{\otimes (k  + \ell)/2}  \otimes x)  \times (A^{\otimes (m + n )/2}  \otimes y) \to 
A^{\otimes (k + \ell + m + n)/2} \otimes z.
$$
 Taking Equation (\ref{hom spaces direct sum as R modules}) into account, we see that the product extends to a bilinear product $\Hom(k, \ell)  \times \Hom(m, n)  \to  \Hom(k + m, \ell + n)$.   One can check that $\mathcal B_A$ satisfies the axioms of a monoidal tensor category (see for example, ~\cite{bakalov-kirillov}, page 12).  We will not need this, so we will not go into more detail here.
 
The bilinear tensor product operation 
 $$\odot: \Hom(k, \ell)  \times \Hom(m, n)  \to  \Hom(k + m, \ell + n)
 $$
  determines a linear map
$$
\odot : \Hom(k, \ell)  \otimes \Hom(m, n)  \to  \Hom(k + m, \ell + n).
$$  
In particular, identifying $D_n(A)$ with $\End(n)$, we have a linear map
$$
\odot: D_n(A) \otimes  D_m(A) \to  D_{n + m}(A).
$$

 \begin{lemma} \label{lemma:  injectivity of horizontal juxtaposition}
 The linear map
  $
\odot: D_n(A) \otimes  D_m(A) \to  D_{n + m}(A)
$ 
is injective.
 \end{lemma}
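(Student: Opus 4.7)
The plan is to exploit the direct sum decomposition of the Hom spaces given in Equation (\ref{hom spaces direct sum as R modules}) and reduce injectivity of $\odot$ to the (evident) injectivity of horizontal juxtaposition of ordinary Brauer diagrams $D_n \times D_m \to D_{n+m}$. The main observation is that horizontal juxtaposition never creates a strand crossing from the left $n$ positions to the right $m$ positions, so the pair $(d, d')$ of underlying ordinary diagrams can be uniquely recovered from the juxtaposed diagram $d \odot d'$.

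First I would decompose the source using the bilinearity of tensor product over direct sums:
$$
D_n(A) \otimes_R D_m(A) \;=\; \bigoplus_{(d, d')} \bigl( A^{\otimes n} \otimes d\bigr) \otimes_R \bigl(A^{\otimes m} \otimes d'\bigr),
$$
the sum being over pairs of ordinary $n$- and $m$-strand Brauer diagrams. From the definition of the juxtaposition of $A$-Brauer diagrams, the map $\odot$ carries the summand indexed by $(d, d')$ into $A^{\otimes(n+m)} \otimes (d \odot d')$, since juxtaposition preserves the standard orientations and the labels on individual strands. By the injectivity of $(d, d') \mapsto d \odot d'$ noted above, distinct pairs $(d, d')$ map into distinct summands of $D_{n+m}(A)$, and these summands are independent by Lemma \ref{direct sum principle for A Brauer algebras}. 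This reduces injectivity of $\odot$ to injectivity of each restricted map
$$
\odot_{d, d'} \colon \bigl(A^{\otimes n} \otimes d\bigr) \otimes_R \bigl(A^{\otimes m} \otimes d'\bigr) \longrightarrow A^{\otimes(n+m)} \otimes (d \odot d').
$$

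Identifying each summand with its underlying copy of $A^{\otimes n}$, $A^{\otimes m}$, or $A^{\otimes(n+m)}$, the map $\odot_{d, d'}$ is the canonical isomorphism $A^{\otimes n} \otimes_R A^{\otimes m} \cong A^{\otimes(n+m)}$ precomposed with a permutation of tensor factors that reflects the difference between the concatenation of the strand orderings of $d$ and $d'$ on the one hand, and the strand ordering of $d \odot d'$ on the other. Concretely, the strands of $d \odot d'$ are listed as the top-minimum strands of $d$, then the top-minimum strands of $d'$, then the bottom-minimum strands of $d$, then the bottom-minimum strands of $d'$. A permutation of tensor factors is an $R$-module isomorphism, so $\odot_{d, d'}$ is an isomorphism, in particular injective.

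There is no genuine obstacle; the only delicate part is the bookkeeping to pin down the tensor-factor permutation precisely, which is needed to confirm that we have an $R$-module isomorphism on each summand (rather than merely a well-defined $R$-linear map). Once the summand-by-summand analysis is complete, injectivity of $\odot$ follows from the injectivity of each $\odot_{d, d'}$ together with the disjointness of the images in $D_{n+m}(A)$.
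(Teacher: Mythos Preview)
Your proposal is correct and follows essentially the same approach as the paper: decompose the source and target by underlying ordinary Brauer diagrams, use injectivity of $(d,d')\mapsto d\odot d'$ to separate the summands, and then identify each restricted map $\odot_{d,d'}$ with the canonical isomorphism $A^{\otimes n}\otimes_R A^{\otimes m}\cong A^{\otimes(n+m)}$ twisted by a permutation of tensor places. The paper additionally remarks that the free case is immediate (basis elements go to basis elements), but your argument already handles the general case directly.
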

 
 \begin{proof}
 Note this is obvious if $A$ is free as an $R$--module, which is the only case we will actually need.  In this, case, the Hom spaces are also free, as described in Remark \ref{remark basis of hom spaces}, and the map
takes basis elements to basis elements. 

For the general case, fix an ordinary  $m$--Brauer diagram $x$ and an ordinary
$n$--Brauer diagram $y$, and let $z$ be the ordinary $m+n$--Brauer diagram obtained by horizontal juxtaposition of $x$ and $y$.  Then because of 
Equation \refc{A Brauer algebra as S module}, we only have to check injectivity of the
restriction
$
\odot: (A^{\otimes m} \otimes x) \otimes  (A^{\otimes n} \otimes y) 
\to A^{\otimes(m+n)} \otimes z.
$
But this map can be identified with a linear map $A^{\otimes m} \otimes A^{\otimes n} \to A^{\otimes(m+n)}$,  and this linear map is simply a permutation of tensor places.   The following example will clarify this.
 \end{proof}
 
 \begin{example}  Consider the $A$--Brauer diagrams $X$ and $Y$ from Figure \ref{some A labeled Brauer diagrams}.  Let $x$ and $y$ be the underlying Brauer diagrams of $X$ and $Y$, and let $z$ be the underlying Brauer diagram of $X \odot Y$.   Then under the identification of oriented $A$--labeled diagrams with simple tensors, $X$ is identified with $(x_1 \otimes x_2 \otimes x_3) \otimes x$,  $Y$ with $(y_1 \otimes y_2 \otimes y_3) \otimes y$, and 
 $X \odot Y$ with $(x_1 \otimes x_2 \otimes y_1 \otimes y_2 \otimes x_3 \otimes y_3) \otimes z$.  
 \end{example}
 
 Recall that $V_0^{2f}$ denotes the span of $A$--Brauer diagrams of rank zero in $D_{2f}(A)$.   Any $A$--Brauer diagram $Z$ of rank zero evidently has a factorization $Z = X Y^*$, where
 $X$ and $Y$  are  oriented $A$--labeled $(0, 2f)$--Brauer diagrams.  $X$ and $Y^*$  are simply the
 bottom and top halves of $Z$.

 \begin{lemma} \label{tensor decomposition of rank zero diagrams}
 $X \otimes Y \mapsto X Y^*$ determines an isomorphism
 $$
 \Hom(0, 2f) \otimes \Hom(0, 2f) \to  V_0^{2f}.
 $$
 \end{lemma}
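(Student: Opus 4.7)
The plan is to imitate the approach of the proof of Lemma \ref{lemma:  injectivity of horizontal juxtaposition}: reduce to a combinatorial assertion on the underlying ordinary Brauer diagrams by means of Lemma \ref{direct sum principle for A Brauer algebras}, and then recognise the induced map on $A$-labels as a permutation of tensor factors.

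First I would verify that the map is well-defined. If $X, Y \in \Hom(0,2f)$ then $Y^* \in \Hom(2f,0)$, and the composition $XY^*$, obtained by stacking the ``cap'' diagram $Y^*$ over the ``cup'' diagram $X$, contains no through-strand and no closed loop; so it is a single $A$-Brauer diagram of rank zero, i.e.\ an element of $V_0^{2f}$. In particular the formation of $XY^*$ involves no multiplication of $A$-labels and no trace factors.

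Next, using the decomposition \eqref{hom spaces direct sum as R modules} and Lemma \ref{direct sum principle for A Brauer algebras}, I would write
\begin{equation*}
V_0^{2f} \;=\; \bigoplus_{z}\; A^{\otimes 2f} \otimes z
\qquad \text{and} \qquad
\Hom(0,2f) \otimes \Hom(0,2f) \;=\; \bigoplus_{x,y}\;(A^{\otimes f}\otimes x) \otimes (A^{\otimes f}\otimes y),
\end{equation*}
where $z$ runs over the rank-zero ordinary $(2f,2f)$-Brauer diagrams and $x,y$ over ordinary $(0,2f)$-Brauer diagrams. On ordinary diagrams, $(x,y) \mapsto xy^*$ is a bijection onto the rank-zero $(2f,2f)$-diagrams: any such $z$ splits canonically into its $f$ cups (forming $x$) and its $f$ caps (forming $y^*$), and conversely any pair $(x,y)$ of cup/cap diagrams gives back a unique rank-zero $z = xy^*$. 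Thus the map of the lemma respects the two decompositions, and it suffices to show that for each $z = xy^*$ the induced $R$-linear map
\begin{equation*}
(A^{\otimes f} \otimes x) \otimes (A^{\otimes f} \otimes y) \;\longrightarrow\; A^{\otimes 2f} \otimes z
\end{equation*}
is an isomorphism.

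Finally, each strand of $z$ arises from exactly one strand of $X$ or of $Y^*$, with no labels combined and no trace factors produced. The induced map on the $A$-label tensor factors is therefore simply a permutation of the $2f$ tensor places, possibly composed with applications of $*$ in some factors to compensate for departures from the standard orientation convention of Section \ref{subsection order and orientation of edges}. Such a map is manifestly an $R$-linear bijection, and summing over $(x,y)$ yields the isomorphism claimed in the lemma. The main obstacle, such as it is, lies in making this last step precise: one must carefully track how the standard orderings and orientations of strands of $X$, $Y$, and $z = XY^*$ correspond, in order to exhibit the label map as an honest (signed) permutation of tensor places. Once that bookkeeping is in place, no further algebraic identities about the involution or the trace on $A$ are needed.
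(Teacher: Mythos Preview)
Your proposal is correct and follows essentially the same route as the paper: the paper notes that surjectivity is immediate from the factorisation of a rank-zero diagram into its top and bottom halves, and for injectivity refers back to the argument of Lemma~\ref{lemma:  injectivity of horizontal juxtaposition}, which is precisely the reduction to a permutation of tensor places that you carry out. Your extra care about possible applications of $*$ on some tensor factors is harmless (such maps are still $R$-linear bijections), though in fact the standard orientations match up so that no $*$ is needed.
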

 
 \begin{proof} Surjectivity follows from the remark preceding the statement of the lemma, and the proof of injectivity is similar to the proof of Lemma \ref{lemma:  injectivity of horizontal juxtaposition}. 
 \end{proof}
 
 \subsection{Bases of $A$--Brauer algebras}
 
 \begin{definition}  Let $0 \le k \le n$.  A $(k, n-k)$--shuffle is a permutation $\pi \in \symn$ such that
 $\pi(i) < \pi(j)$  whenever $1 \le i < j \le k$  or $k+1 \le i < j \le n$.
 \end{definition}
 
 \begin{remark}  If $0 < k < n-k$,  the  $(k, n-k)$ shuffles are just the distinguished left coset representatives of 
 $\sym_k \times \sym_{n-k}$ in $\symn$.  If $k = 0$ or $k = n$ the identity permutation is the unique $(k, n-k)$ shuffle.
 \end{remark}
 
 \begin{lemma} \label{lemma factorization of Brauer diagrams}
  Let $X$ be an $n$--strand $A$--Brauer diagram  of rank $s = n - 2f$.  Then $X$ has a unique factorization
 $$
 X = \alpha(X_0 \odot X_1) \beta^*,
 $$
 where $\alpha$ and $\beta$ are $(2f, s)$--shuffles, $X_0$ is a $2f$--strand $A$--Brauer diagram  of rank $0$,  and $X_1$ is an $s$--strand $A$--labeled  permutation diagram .
 \end{lemma}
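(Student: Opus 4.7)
The plan is to extract the shuffles $\alpha$ and $\beta$ directly from the combinatorial data of $X$. For an $A$--Brauer diagram $X$ of rank $s = n - 2f$, let $T_h \subseteq \{1, \ldots, n\}$ be the set of indices of top vertices joined to other top vertices by horizontal strands, and let $B_h$ be the analogous set on the bottom; each has size $2f$. A $(2f, s)$--shuffle is determined by the $2f$--element subset it produces in the first $2f$ positions, so there exist unique $(2f, s)$--shuffles $\beta$ and $\alpha$ with $\beta(\{1, \ldots, 2f\}) = T_h$ and $\alpha(\{1, \ldots, 2f\}) = B_h$. These will be the shuffles appearing in the factorization.

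Next, view $\alpha$ and $\beta$ as unlabeled permutation diagrams in $D_n(A)$ and form $Y = \alpha\inv X (\beta\inv)^*$. Because $\alpha\inv$ and $(\beta\inv)^*$ have only identity $A$--valued labels on their strands, and the products introduce no closed loops, this conjugation amounts merely to relabeling the bottom vertices by $\alpha$ and the top vertices by $\beta$, leaving all $A$--valued labels on strands undisturbed (up to the usual orientation/involution identification). By the choice of $\alpha$ and $\beta$, the top and bottom horizontal-arc endpoints of $Y$ are precisely $\{\p 1, \ldots, \p{2f}\}$ and $\{\pbar 1, \ldots, \pbar{2f}\}$, respectively. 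Hence the horizontal strands of $Y$ lie entirely among the first $2f$ top-bottom pairs, while the $s$ through strands of $Y$ connect $\{\p{2f+1}, \ldots, \p n\}$ to $\{\pbar{2f+1}, \ldots, \pbar n\}$ through some permutation. This gives a decomposition $Y = X_0 \odot X_1$ with $X_0$ a rank-$0$ $2f$--strand $A$--Brauer diagram and $X_1$ an $s$--strand $A$--labeled permutation diagram, each carrying the labels inherited from $Y$. Multiplying by $\alpha$ on the left and $\beta^*$ on the right yields $X = \alpha(X_0 \odot X_1)\beta^*$.

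For uniqueness, suppose $X = \alpha'(X_0' \odot X_1')\beta'^*$ is any factorization of the prescribed form. The same computation with the multiplication rule shows that the top horizontal-arc endpoints of $\alpha'(X_0' \odot X_1')\beta'^*$ are exactly $\beta'(\{1, \ldots, 2f\})$; setting this equal to $T_h$ forces $\beta' = \beta$, since a $(2f, s)$--shuffle is determined by this subset. The symmetric argument on the bottom gives $\alpha' = \alpha$. Then $X_0' \odot X_1' = \alpha\inv X (\beta\inv)^* = X_0 \odot X_1$, and the splitting of a diagram as a horizontal juxtaposition of its first $2f$ strands and its last $s$ strands is manifestly unique, so $X_0' = X_0$ and $X_1' = X_1$. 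The main delicate point throughout is the careful bookkeeping with $A$--valued labels and orientations under left and right multiplication by the unlabeled shuffles; this is immediate from the definition of the product (shuffles contribute only identity labels and no closed loops arise), but it is the step that requires explicit attention to justify.
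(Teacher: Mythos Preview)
Your proof is correct and follows essentially the same approach as the paper: both extract the shuffles $\alpha$ and $\beta$ from the sets of top and bottom endpoints of horizontal strands, then conjugate $X$ by these shuffles to obtain a diagram that visibly decomposes as $X_0 \odot X_1$, and deduce uniqueness from the fact that the shuffles are forced by the endpoint data. Your explicit remark about $A$--labels surviving conjugation by unlabeled permutations is a point the paper leaves implicit, but otherwise the arguments coincide.
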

 
\begin{proof}  Let 
$
\p i_1 < \cdots < \p i_{2f}
$
be the top vertices adjacent to horizontal strands of $X$ and
$
\p j_1 < \cdots < \p j_{s}
$
the top vertices adjacent to vertical strands of $X$. Likewise, let
$
\pbar k_1 < \cdots < \pbar k_{2f}
$
be the bottom vertices adjacent to horizontal strands of $X$ and
$
\pbar l_1 < \cdots < \pbar l_{s}
$
the bottom vertices adjacent to vertical strands of $X$.
Set
$$
\alpha = \begin{pmatrix}
1  & \dots  &2f &2f+1 & \dots &n \\
 k_1 &\dots & k_{2f} & l_1 &\dots &  l_s\\
\end{pmatrix}
$$
and
$$
\beta = \begin{pmatrix}
1  & \dots  &2f &2f+1 & \dots &n \\
 i_1 &\dots & i_{2f} & j_1 &\dots &  j_s\\
\end{pmatrix}
$$
Consider $X' = \alpha^*X \beta$.  Then $X'$ has horizontal strands connecting $\p 1, \dots \p {2f}$ 
and $\pbar 1, \dots \pbar {2f}$  in pairs and  vertical strands connecting
$\p {2f+1}, \dots, \p n$ and $\pbar {2f+1}, \dots, \pbar n$ in pairs.  Hence $X' = X_0 \odot X_1$, where
$X_0$ and $X_1$ are as in the statement of the lemma.  Thus
$$
X = \alpha X' \beta^* =  \alpha(X_0 \odot X_1) \beta^*,
$$
as required.

The factorization is unique because specifying the  shuffles $\alpha$ and $\beta$  is equivalent to 
specifying the set of vertices adjacent to horizontal and to vertical strands, and $X$ together with 
$\alpha$ and $\beta$ determine $X_0$ and $X_1$.
\end{proof} 

\begin{remark}  If $d$ is an ordinary $n$--strand Brauer diagram of rank $s$, then $d$ has a unique
factorization
$$
d = \alpha (d_0 \odot d_1) \beta^*,
$$
where $\alpha$ and $\beta$ are $(2f, s)$ shuffles, $d_0$ is an ordinary $2f$--strand Brauer diagram of rank $0$, and $d_1$ is an ordinary $s$--strand permutation diagram.   Moreover, if $X$ is an $A$--Brauer diagram with underlying ordinary Brauer diagram $d$, and
$X = \alpha(X_0 \odot X_1) \beta^*$ is the factorization of $X$, then
$d =  \alpha(d_0 \odot d_1) \beta^*$,  where $d_0$ and $d_1$ are the underlying ordinary Brauer diagrams of
$X_0$ and $X_1$. 
\end{remark}

Fix $s \le n$ with $n -s$ even and let $f = (n-s)/2$.  Fix $(2f, s)$--shuffles $\alpha$ and $\beta$.  Let 
$P_{\alpha, \beta}$ be the set of ordinary Brauer diagrams $d$  of rank $s$ with factorization
$
d = \alpha (d_0 \odot d_1) \beta^*
$.  The sets $P_{\alpha, \beta}$ for different pairs $(\alpha, \beta)$ are mutually disjoint.  Let 
$$
V_{\alpha, \beta} = \alpha(V^{2f}_0 \odot V^s_s)\beta^*.
$$
Then $V_{\alpha, \beta}$ is the span of those $A$--Brauer diagrams of rank $s$ whose underlying ordinary Brauer diagram lies in $P_{\alpha, \beta}$.  

\begin{lemma} \label{direct sum decomposition of rank s subspace}
$ V^n_s = \bigoplus_{\alpha, \beta} V_{\alpha, \beta}.$
\end{lemma}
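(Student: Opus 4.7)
The plan is to read off the decomposition as a special case of Lemma \ref{direct sum principle for A Brauer algebras}. Let $f = (n-s)/2$. Let $P$ be the set of ordinary $n$--strand Brauer diagrams of rank exactly $s$. By definition, $V^n_s$ is the span (as an $R$--module) of those $A$--Brauer diagrams whose underlying ordinary diagram lies in $P$, and $V_{\alpha, \beta}$ is the span of those whose underlying ordinary diagram lies in $P_{\alpha, \beta}$.

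The key combinatorial input is the remark immediately following Lemma \ref{lemma factorization of Brauer diagrams}: every ordinary $n$--strand Brauer diagram $d$ of rank $s$ has a \emph{unique} factorization $d = \alpha(d_0 \odot d_1)\beta^*$ with $\alpha, \beta$ being $(2f, s)$--shuffles, $d_0$ an ordinary rank--$0$ diagram on $2f$ strands, and $d_1$ an ordinary permutation diagram on $s$ strands. Thus each such $d$ lies in exactly one $P_{\alpha, \beta}$. Together with the fact (stated just before the lemma) that the $P_{\alpha, \beta}$ are mutually disjoint, this shows that $\{P_{\alpha, \beta}\}_{\alpha, \beta}$ is a set partition of $P$.

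Now I apply Lemma \ref{direct sum principle for A Brauer algebras} to $P$ with the partition $\{P_{\alpha, \beta}\}$: the lemma gives
\[
V^n_s \;=\; \bigoplus_{\alpha, \beta} \, V_{\alpha, \beta}
\]
as $R$--modules, which is the claim. The only minor point to check is that the definition of $V_{\alpha, \beta}$ as $\alpha(V^{2f}_0 \odot V^s_s)\beta^*$ actually coincides with the span of $A$--Brauer diagrams whose underlying ordinary diagram lies in $P_{\alpha, \beta}$. This is already asserted in the sentence preceding the lemma, and it follows from the fact that left/right multiplication by the (unlabeled) permutation diagrams $\alpha, \beta^*$ and horizontal juxtaposition $\odot$ send $A$--Brauer diagrams to $A$--Brauer diagrams whose underlying ordinary diagrams are exactly those admitting the prescribed factorization; the injectivity of $\odot$ from Lemma \ref{lemma:  injectivity of horizontal juxtaposition} ensures no collapsing occurs. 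The only conceivable obstacle is verifying this last identification cleanly, but once the uniqueness of factorization is in hand it is essentially formal.
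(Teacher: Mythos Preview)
Your proof is correct and follows essentially the same approach as the paper: the paper's proof simply reads ``This follows from the discussion above and Lemma \ref{direct sum principle for A Brauer algebras},'' and you have unpacked exactly that discussion---the uniqueness of the factorization gives a set partition $\{P_{\alpha,\beta}\}$ of the rank-$s$ diagrams, and Lemma \ref{direct sum principle for A Brauer algebras} then yields the direct sum. Your additional remarks about the identification of $V_{\alpha,\beta}$ with the span over $P_{\alpha,\beta}$ and the role of injectivity of $\odot$ are just extra care on a point the paper asserts in the sentence preceding the lemma.
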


\begin{proof}  This follows from the discussion above and Lemma \ref{direct sum principle for A Brauer algebras}. 
\end{proof}

For the rest of this section, assume that $A$ is a free $R$--module, so that each $D_{k, \ell}(A)$ is also a free $R$--module, by Remark \ref{remark basis of hom spaces}. 

For each $s$ with $s \le n$ and $n-s$ even, let $f = (n-s)/2$.  Let $\mathbb B_s$ be any $R$--basis of 
$V^s_s$.    Let $\mathbb A_f$  be the basis of $D_{0, 2f}(A)$ consisting of 
$(0, 2f)$--Brauer diagrams endowed with the standard orientation and labeled by basis elements of $A$.
Define
$$
\mathcal B^n_s = \{\alpha(X Y^* \odot b) \beta^* : \alpha \text{ and } \beta \text{ are } (2f, s)\text{--shuffles}, X, Y \in \mathbb A_f, \text{ and } b \in \mathbb B_s\}.
$$

\begin{lemma} \label{lemma basis of the A Brauer algebra}
$\mathcal B^n_s$ is an $R$--basis of $V^n_s$ and $\cup_s \mathcal B^n_s$ is an $R$--basis of $D_n(A)$.  
\end{lemma}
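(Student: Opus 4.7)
The plan is to reduce everything to two facts already proved: the orthogonal direct sum decomposition of $V^n_s$ by shuffle pairs in Lemma \ref{direct sum decomposition of rank s subspace}, and the tensor factorization of rank--zero diagrams in Lemma \ref{tensor decomposition of rank zero diagrams}. By Lemma \ref{direct sum by rank}, once we know each $\mathcal B^n_s$ is an $R$--basis of $V^n_s$, the second assertion follows, so I focus on the first.

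Fix $s$ with $n-s$ even, and write $f = (n-s)/2$. By Lemma \ref{direct sum decomposition of rank s subspace}, $V^n_s = \bigoplus_{\alpha, \beta} V_{\alpha, \beta}$ where $V_{\alpha, \beta} = \alpha(V^{2f}_0 \odot V^s_s)\beta^*$ and the sum runs over pairs of $(2f, s)$--shuffles. Since $\alpha$ and $\beta$ are permutation diagrams (hence invertible with $\alpha\inv = \alpha^*$), left multiplication by $\alpha$ and right multiplication by $\beta^*$ are $R$--module automorphisms of $D_n(A)$, so it suffices to prove that for each such pair $(\alpha,\beta)$ the set $\{XY^* \odot b : X, Y \in \mathbb A_f,\ b \in \mathbb B_s\}$ is an $R$--basis of $V^{2f}_0 \odot V^s_s$.

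Next, I invoke Lemma \ref{lemma:  injectivity of horizontal juxtaposition}, which says the map $\odot : D_{2f}(A) \otimes D_s(A) \to D_{2f+s}(A)$ is injective. Its restriction to the submodule $V^{2f}_0 \otimes V^s_s$ yields an $R$--module isomorphism onto $V^{2f}_0 \odot V^s_s$, sending simple tensors $(XY^*) \otimes b$ to $XY^* \odot b$. Hence it suffices to show $\{(XY^*) \otimes b : X, Y \in \mathbb A_f,\ b \in \mathbb B_s\}$ is an $R$--basis of $V^{2f}_0 \otimes V^s_s$. Since $\mathbb B_s$ is by hypothesis an $R$--basis of $V^s_s$, and tensor products of bases give bases of tensor products of free $R$--modules, this reduces further to showing that $\{XY^* : X, Y \in \mathbb A_f\}$ is an $R$--basis of $V^{2f}_0$. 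But this is immediate from Lemma \ref{tensor decomposition of rank zero diagrams}, which provides an isomorphism $\Hom(0, 2f) \otimes \Hom(0, 2f) \to V^{2f}_0$ sending $X \otimes Y \mapsto XY^*$, together with the fact that $\mathbb A_f$ is an $R$--basis of $\Hom(0, 2f) = D_{0,2f}(A)$ by Remark \ref{remark basis of hom spaces}.

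There is no real obstacle here; the argument is a reassembly of prior results. The only point requiring attention is that all of the relevant $R$--modules (the Hom spaces, $V^s_s$, and $V^{2f}_0$) are free, so that tensor products preserve bases and injective maps between free modules carry $R$--bases to $R$--linearly independent sets. This freeness is guaranteed by the standing hypothesis of this subsection that $A$ is free as an $R$--module, via Remark \ref{remark basis of hom spaces}.
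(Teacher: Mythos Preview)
Your proof is correct and follows essentially the same approach as the paper's: both invoke Lemmas \ref{lemma:  injectivity of horizontal juxtaposition} and \ref{tensor decomposition of rank zero diagrams} to show $\{XY^*\odot b\}$ is a basis of $V^{2f}_0\odot V^s_s$, then use the invertibility of the shuffles and Lemmas \ref{direct sum by rank} and \ref{direct sum decomposition of rank s subspace} to assemble the basis of $V^n_s$ and of $D_n(A)$. You have simply spelled out the intermediate reductions a bit more explicitly than the paper does.
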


\begin{proof}  By Lemmas \ref{lemma:  injectivity of horizontal juxtaposition} and \ref{tensor decomposition of rank zero diagrams},  
$$
 \{X Y^* \odot b :  X, Y \in \mathbb A_f, \text{ and } b \in \mathbb B_s\}
$$
is an $R$--basis of $V^{2f}_0 \odot V^s_s$.    Since, for fixed $\alpha$ and $\beta$, 
$Z \mapsto \alpha Z \beta^*$ is a linear isomorphism from $V^{2f}_0 \odot V^s_s$ to $V_{\alpha, \beta}$,
it follows that
$$
\{\alpha(X Y^* \odot b) \beta^* :  X, Y \in \mathbb A_f, \text{ and } b \in \mathbb B_s\}
$$
is an $R$--basis of $V_{\alpha, \beta}$.   Thus, by Lemmas  \ref{direct sum by rank} and  \ref{direct sum decomposition of rank s subspace}, it follows that $\mathcal B^n_s$ is a basis of $V^n_s$ and that
$\cup_s \mathcal B^n_s$ is a basis of $D_n(A)$.  
\end{proof}

\section{Cellularity of  $A$--Brauer algebras} \label{section: Cellularity of A brauer algebras}

 In this section, we will prove the following theorem:

\begin{theorem}  \label{theorem cellularity of A Brauer algebras}
Let $A$ be an algebra with an algebra involution $*$ and a
$*$--invariant trace $\tr$.  Suppose that
$A$ is  cyclic cellular.  Then for all $n \ge 1$,  the $A$--Brauer algebra $D_n(A)$ 
is a cyclic cellular  algebra. 
\end{theorem}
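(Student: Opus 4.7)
The plan is to extend the Murphy-style construction of Section \ref{subsection: cellularity theorem for wreath products} from the wreath product $A\wr\symn$ to $D_n(A)$ by stratifying $D_n(A)$ according to the rank filtration $W_r^n$ and exploiting the factorization of $A$--Brauer diagrams afforded by Lemma \ref{lemma factorization of Brauer diagrams}. As the partially ordered set for the cell datum I take pairs $(s,\blam)$ with $0\le s\le n$, $n-s$ even, and $\blam$ a multipartition of total size $s$ (i.e.\ $\sum_\gamma|\blam(\gamma)|=s$), ordered by declaring $(s',\blam')>(s,\blam)$ iff $s'<s$, or $s'=s$ and $\blam'\StrictlyGammaDominates\blam$; so that larger in this order corresponds to strictly lower rank, or to a strictly higher multipartition at the same rank. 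For each $(s,\blam)$ I set $f=(n-s)/2$ and let $E_f\in D_{2f}$ be the unlabeled rank--zero Brauer diagram whose horizontal strands are $\{\p{2i-1},\p{2i}\}$ and $\{\pbar{2i-1},\pbar{2i}\}$ for $1\le i\le f$; the proposed cyclic generator is
$$
y_{(s,\blam)} \;=\; E_f\odot y_\blam, \qquad y_\blam \;=\; y^{\alpha(\blam)} x_\blam,
$$
with $y_\blam$ interpreted inside $V_s^s\cong A\wr\sym_s$ via Lemma \ref{identification of V n n with wreath product algebra}. The index set $\mathcal T(s,\blam)$ consists of triples $(\alpha,X,(\mfs,u))$ where $\alpha$ is a $(2f,s)$--shuffle, $X$ is drawn from the basis of $\Hom(0,2f)$ described in Remark \ref{remark basis of hom spaces}, and $(\mfs,u)$ lies in the wreath--product index $\mathcal T(\blam)$; the candidate cellular basis element at $(\alpha,X,\mfs,u)$, $(\beta,Y,\mft,w)$ is
$$
\alpha\bigl(XY^*\odot \m\blam\mfs u\mft w\bigr)\beta^*,
$$
which is an $R$--basis of $D_n(A)$ by Lemma \ref{lemma basis of the A Brauer algebra} applied with $\mathbb B_s$ equal to the wreath--product cellular basis of $A\wr\sym_s$.

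Verification of the cellular axioms then reduces to three points. First, $W_{s-2}^n$ is a two--sided ideal, immediate from Lemma \ref{direct sum by rank} together with rank submultiplicativity. Second, for any generator $g$ of $D_n(A)$ (i.e.\ any $a^{(1)}$, $s_i$, or $e_i$, per Lemma \ref{lemma: generators of D n of A}), the product $g\cdot y_{(s,\blam)}$ either drops rank (landing safely in $W_{s-2}^n$) or, modulo $W_{s-2}^n$, acts through the wreath--product algebra $A\wr\sym_s$ on the $y_\blam$ factor (after transport by the shuffle), whereupon Proposition \ref{proposition: multiplicative property of general murphy basis elements} and Theorem \ref{theorem cellularity of wreath products} control the expansion in the $\GammaDominates$--ordered Murphy basis. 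Third, property (2a) of Lemma \ref{lemma: equivalent conditions for cyclic cellular algebra} follows from $E_f^*=E_f$ combined with Lemma \ref{lemma involution on general murphy type elements} applied to $y_\blam$, since $*$ is a contravariant tensor functor on $\mathcal B_A$. Together with the basis count supplied by Lemma \ref{lemma basis of the A Brauer algebra}, these steps yield the cyclic cellular structure on $D_n(A)$ with cell modules cyclic generators $y_{(s,\blam)}+\bar A^{(s,\blam)}$.

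The main obstacle lies in the rank--preserving subcase of the second step when $g=e_i$. Usually $e_i\cdot y_{(s,\blam)}$ drops rank and so lies in $W_{s-2}^n$; but when it does not, one must rewrite the product in the proposed normal form $\alpha'(X'Y^*\odot Z_1')\beta^*$, which amounts to repackaging composite horizontal strands into a new rank--zero half--diagram $X'$, potentially producing a scalar $\tr(\cdot)$ factor from a closed labeled loop, and adjusting the shuffle $\alpha'$ accordingly. Here the $*$--invariance of $\tr$ is essential: it is precisely what makes the loop--trace factor well--defined independent of the orientation chosen on the closed loop, which is why the involution--invariant trace hypothesis appears in the statement. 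I would handle this repackaging by imitating Enyang's case analysis for the ordinary Brauer algebra, decomposing by the relative position of the pair of vertices attached by $e_i$ (both in the horizontal block of the factored diagram, both in the vertical block, or mixed) and verifying in each case that the result lies in the proposed span, modulo $\bar A^{(s,\blam)}$. The fact that $y_{(s,\blam)}$ is fixed by $\sym_{2f}\times\sym_{\alpha(\blam)}$ then guarantees that the shuffle--transport needed in these rewrites respects the $\GammaDominates$ ordering.
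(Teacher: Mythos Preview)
Your proposal is correct and takes essentially the same approach as the paper: the same partially ordered set $\Lambda=\{(s,\blam)\}$, the same cellular basis $\alpha(XY^*\odot \m\blam\mfs u\mft w)\beta^*$, the same cyclic generator $y_{(s,\blam)}=(e_1e_3\cdots e_{2f-1})\odot y_\blam$, and the same reduction to a case analysis on the generators $a^{(1)}$, $s_i$, $e_i$.

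Two small points of precision. First, where you write ``the product $g\cdot y_{(s,\blam)}$'' you really need the action of $g$ on an arbitrary element $v_{(\alpha,X,\mfs,u)}\,y_{(s,\blam)}=\alpha(XX_0^*\odot \m\blam\mfs u{\mft^\blam}1)$; the paper's Lemma~\ref{cellular axiom 2 for A Brauer algebra} makes this explicit, and your later mention of ``the factored diagram'' with horizontal and vertical blocks shows you have this in mind. The cellular axiom~(2) for general basis elements then follows by right--multiplying by $(v_{(\beta,Y,\mft,w)})^*$, exactly as you would expect and as the paper does in Corollary~\ref{corollary:  cellular property 2 for basis D of A Brauer algebra}. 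Second, in the mixed $e_i$ case (one index in the horizontal block, one in the vertical block) no closed loop actually arises: the paper simply observes that $e_i\alpha(X\odot 1)\in D_{s,n}(A)$ still has rank $s$, refactors it as $\alpha'(X_1\odot X_2)$ with $X_2\in V_s^s$, and then invokes Proposition~\ref{proposition: multiplicative property of general murphy basis elements}. The $*$--invariance of $\tr$ is needed earlier, to make the product in $D_n(A)$ well defined, rather than inside the cellularity argument itself.
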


The proof of this theorem is similar to proofs in the literature for the cellularity of Brauer or BMW algebras  
~\cite{Enyang1, Xi-BMW}.   We will base our computations on some arguments from a well known, but unpublished paper ~\cite{Morton-Wassermann} of Morton and Wassermann on bases of the BMW algebras. 
A similar but necessarily more complicated proof of the cellularity of cyclotomic BMW algebras was given in  ~\cite{goodman-2008}. It would be possible here  to use  the method of iterated inflations of Koenig and Xi ~\cite{KX-Morita},  but we prefer not to take this approach, as we can exhibit explicit cellular bases of the $A$--Brauer  algebras, based on our cellular basis of the wreath product algebras  $A \wr \sym_s$.

Assume that $A$ is a cyclic cellular algebra over an integral domain $R$, and that $A$ has   an involution--invariant  $R$--valued trace.  
We adopt the notation of Section \ref{Cellularity of wreath products} regarding the cellular structure of $A$;  in particular, the partially ordered set in the cell datum of $A$ is denoted by $(\Gamma, \ge)$.  By Theorem \ref{theorem cellularity of wreath products}, for all $s \ge 1$, 
$A\wr \sym_s$ is cyclic cellular.  The partially ordered set in the cell datum for $A\wr \sym_s$ is
$(\Lambda^\Gamma_s, \GammaDominates)$.  The  cellular basis of $A\wr \sym_s$  is
$$
\mathcal B = \{\m \blam \mfs v \mft w :  \blam \in \multipart, \text{ and }  (\mfs, v), (\mft, w) \in \mathcal T(\blam)\},
$$
where $\mathcal T(\blam)$ denotes the set of pairs $(\mfs, v)$ with $\mfs$ a standard $\blam$--tableau and $v \in V^{\alpha(\blam)}$.     For fixed $s$ and $\blam$,   define the ideals $N^\blam$ and $\overline N^\blam$ 
in $A \wr \sym_s$, as in Definition \ref{definition of standard ideals in wreath product algebra}. 

Let $n \ge 1$.  We will now propose a cell datum for $D_n(A)$.  
Define
$$\Lambda = \{(s, \blam) :  s \le n \text{ and } n-s \text{ even},  \text{ and } \blam \in \Lambda^\Gamma_s\},
$$  
with partial order $\GammaDominates$ defined by $(s, \blam) \GammaDominates (s', \blam')$ if $s < s'$ or if $s = s'$ and 
 $\blam \GammaDominates \blam'$.    For $(s, \blam) \in \Lambda$,  let $f = (n-s)/2$.
Let $\mathbb A_f$  be the basis of $D_{0, 2f}(A)$ consisting of $(0, 2f)$--Brauer diagrams, endowed with the standard orientation, with strands labeled by elements of the cellular basis of $A$. 
Define $\mathcal T(s, \blam)$ to be the set of all
$(\alpha, X, \mfs, v)$ where $\alpha$ is a $(2f, s)$--shuffle, $X \in \mathbb A_f$,  and $(\mfs, v) \in \mathcal T(\blam)$.    For  $(\alpha, X, \mfs, v)$ and $(\beta, Y, \mft, w)$ in $\mathcal T(s, \blam)$, set

$$
\mm {(s, \blam)}  {(\alpha, X, \mfs, v)} {(\beta, Y, \mft, w)}
= \alpha(XY^* \odot \m \blam \mfs v \mft w) \beta^*,
$$
where we have identified $V^s_s$ with $A \wr \sym_s$, using Lemma \ref{identification of V n n with wreath product algebra}.   Let $\mathcal D$  denote the set of all
$\mm {(s, \blam)}  {(\alpha, X, \mfs, v)} {(\beta, Y, \mft, w)}$,   as $(s, \blam)$  varies in $\Lambda$, and
 $(\alpha, X, \mfs, v)$ and $(\beta, Y, \mft, w)$ vary in  $\mathcal T(s, \blam)$.  Our claim is that 
 $\mathcal D$ is a cellular basis of $D_n(A)$.

\begin{lemma} \label{lemma:  basis of A Brauer 2}
$\mathcal D$ is an $R$--basis of $D_n(A)$.
\end{lemma}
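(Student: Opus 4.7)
The plan is to reduce the claim directly to Lemma \ref{lemma basis of the A Brauer algebra}. That lemma provides, for each rank $s$ with $n - s$ even, an $R$--basis of $V^n_s$ of the form
$$
\mathcal B^n_s = \{\alpha(X Y^* \odot b)\beta^* :  \alpha, \beta \text{ are } (2f, s)\text{--shuffles},\ X, Y \in \mathbb A_f,\ b \in \mathbb B_s\},
$$
where $\mathbb B_s$ may be any $R$--basis of $V^s_s$, and it furthermore asserts that $\cup_s \mathcal B^n_s$ is an $R$--basis of $D_n(A)$.  All that is needed, therefore, is to make an appropriate choice of each $\mathbb B_s$.

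First, I would observe that since $A$ is cellular it is in particular free as an $R$--module, so the hypothesis of Lemma \ref{lemma basis of the A Brauer algebra} applies.  Next, I would invoke Lemma \ref{identification of V n n with wreath product algebra} to identify $V^s_s$ with the wreath product $A \wr \sym_s$ as algebras with involution.  Then I would apply Theorem \ref{theorem cellularity of wreath products}, which says that $A \wr \sym_s$ is cyclic cellular with cellular basis
$$
\mathcal B = \{\m \blam \mfs v \mft w : \blam \in \Lambda^\Gamma_s,\ (\mfs, v), (\mft, w) \in \mathcal T(\blam)\}.
$$
Taking $\mathbb B_s$ to be this cellular basis (transported under the identification above), I obtain an $R$--basis of $V^s_s$.

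With these choices of $\mathbb B_s$ for each admissible $s$, the basis $\cup_s \mathcal B^n_s$ supplied by Lemma \ref{lemma basis of the A Brauer algebra} consists precisely of the elements
$$
\alpha\bigl(X Y^* \odot \m \blam \mfs v \mft w\bigr)\beta^*
= \mm {(s, \blam)} {(\alpha, X, \mfs, v)} {(\beta, Y, \mft, w)},
$$
indexed by $(s, \blam) \in \Lambda$ and pairs $(\alpha, X, \mfs, v), (\beta, Y, \mft, w) \in \mathcal T(s, \blam)$.  This is exactly the set $\mathcal D$.  Hence $\mathcal D$ is an $R$--basis of $D_n(A)$.

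There is no serious obstacle here: the enumerative and structural work has already been carried out in Lemma \ref{lemma basis of the A Brauer algebra} and Theorem \ref{theorem cellularity of wreath products}, and the only thing to verify is that the indexing set $\mathcal T(s, \blam)$ and the notation $\mm {(s, \blam)}  \cdot \cdot$ are defined so that the two descriptions of the basis coincide, which is immediate from the way $\mathcal T(s, \blam)$ was defined just before the statement of the lemma.
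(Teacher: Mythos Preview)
Your proof is correct and takes essentially the same approach as the paper: the paper's proof is the single sentence ``This follows immediately from Lemma \ref{lemma basis of the A Brauer algebra},'' and what you have done is spell out the obvious choice of $\mathbb B_s$ (namely the cellular basis of $A\wr\sym_s$ under the identification $V^s_s\cong A\wr\sym_s$) that makes this immediate.
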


\begin{proof} This follows immediately from Lemma \ref{lemma basis of the A Brauer algebra}. 
\end{proof}

For fixed $(s, \blam) \in \Lambda$, define $M^{(s, \blam)}$ to be the span of basis elements
$\mm {(s', \, \bmu)}  {(\alpha, X, \mfs, v)} {(\beta, Y, \mft, w)}$ with $(s', \bmu) \GammaDominates (s, \blam)$,  and $\overline M^{(s, \blam)}$ to be the span of such basis elements with
 $(s', \bmu) \StrictlyGammaDominates (s, \blam)$.  We will use the following observation several times.
 
 \begin{lemma}  \label{lemma: subset of M bar s lambda}
 For $(s, \blam) \in \Lambda$ and $f = (n-s)/2$, we have
 $$
 V^{2f}_0 \odot \overline N^\blam \subseteq \overline M^{(s, \blam)}.
 $$
 \end{lemma}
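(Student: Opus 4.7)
The plan is to reduce the inclusion to a statement about spanning elements and then recognize each such element as a basis element of $\mathcal D$ indexed by a strictly larger label in $(\Lambda, \GammaDominates)$. The idea is that no clever rewriting via shuffles is needed here: the identity permutation is itself a $(2f,s)$--shuffle, and this alone suffices.

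First, by Lemma \ref{tensor decomposition of rank zero diagrams}, $V^{2f}_0$ is $R$--spanned by products $X Y^*$ with $X, Y \in \mathbb A_f$. By definition, $\overline N^\blam$ is $R$--spanned by the Murphy-type elements $\m \bmu \mfu v \mfv w$ with $\bmu \StrictlyGammaDominates \blam$ in $\Lambda^\Gamma_s$ and $(\mfu, v), (\mfv, w) \in \mathcal T(\bmu)$. Hence it suffices to show that every element of the form $(XY^*) \odot \m \bmu \mfu v \mfv w$ lies in $\overline M^{(s,\blam)}$.

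Next, the identity permutation $\mathrm{id} \in \symn$ is trivially a $(2f, s)$--shuffle, so $(\mathrm{id}, X, \mfu, v)$ and $(\mathrm{id}, Y, \mfv, w)$ are elements of $\mathcal T(s, \bmu)$. By the definition of the proposed basis $\mathcal D$,
$$
(XY^*) \odot \m \bmu \mfu v \mfv w \;=\; \mathrm{id}\,\bigl(XY^* \odot \m \bmu \mfu v \mfv w\bigr)\, \mathrm{id}^* \;=\; \mm{(s, \bmu)}{(\mathrm{id}, X, \mfu, v)}{(\mathrm{id}, Y, \mfv, w)}.
$$

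Finally, since $\bmu \StrictlyGammaDominates \blam$ in $\Lambda^\Gamma_s$, we have $(s, \bmu) \StrictlyGammaDominates (s, \blam)$ in $(\Lambda, \GammaDominates)$ by the definition of the partial order on $\Lambda$. Therefore the basis element above lies in $\overline M^{(s, \blam)}$, and the inclusion follows. There is no real obstacle here; the statement is essentially an unfolding of the definitions of $\mathcal D$, $\overline M^{(s, \blam)}$, and $\overline N^\blam$ once one observes that $\mathrm{id}$ qualifies as a shuffle.
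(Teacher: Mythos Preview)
Your argument is correct and is precisely the unpacking of what the paper means by ``Evident'': you identify spanning elements of $V^{2f}_0 \odot \overline N^{\blam}$ with basis elements $\mm{(s,\bmu)}{(\mathrm{id},X,\mfu,v)}{(\mathrm{id},Y,\mfv,w)}$ of $\mathcal D$ indexed by $(s,\bmu)\StrictlyGammaDominates(s,\blam)$, using that the identity is a $(2f,s)$--shuffle. There is nothing more to it; your write-up simply supplies the details the paper omits.
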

 
 \begin{proof}  Evident.
 \end{proof}

Fix $s \le n$ with $n-s$ even and let $f = (n-s)/2)$.  
Let $X_0$ be the ordinary $(0, 2f)$--Brauer diagram with strands $(\pbar 1, \pbar 2)$, 
$(\pbar 3, \pbar 4)$, \dots, $(\pbar {2f-1}, \pbar {2f})$.   Note that 
$X_0 X_0^* = e_1 e_3 \cdots e_{2f-1}$.  
For $X \in \mathbb A_f$,  let 
$$(\pbar i_1, \pbar j_1) < (\pbar i_2, \pbar j_2) < \cdots < (\pbar i_f, \pbar j_f)
$$
be the edges of $X$ and let $a_1, \dots,  a_f$ be the elements of the cellular basis of $A$ labeling these edges.  Let $\pi(X) \in \sym_{2f}$  denote the permutation
$$
\pi(X)= \begin{pmatrix}
1 & 2 & 3 & 4 &\dots & 2f-1 & 2f \\
i_1 & j_1 & i_2 & j_2 & \dots & i_f & j_f \\
\end{pmatrix},
$$
and let $a(X) = \prod_{j = 1}^f  a_j \power {2j} \in A^{\otimes 2f} \subset A \wr \sym_{2f} \subset D_{2f}(A)$.  

\begin{lemma} \label{lemma:  factorization of V 2f 0}
 Let $X \in \mathbb A_f$.  Then
$
X = \pi(X) a(X) X_0
$.
\end{lemma}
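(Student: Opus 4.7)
The proof is a direct diagrammatic verification using the composition rule for $A$--Brauer diagrams.  The plan is to evaluate $\pi(X)\,a(X)\,X_0$ in two stages: first form $Z := a(X) X_0 \in \Hom(0, 2f)$ by stacking $X_0$ on top of $a(X)$, and then obtain $\pi(X)\,Z$ by stacking $Z$ on top of $\pi(X)$.

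For the first stage, $a(X) = \prod_{j=1}^f a_j^{(2j)}$ is the identity $2f$--strand diagram with strand $2j$ labeled by $a_j$, strand $2j-1$ carrying the identity $\bm 1$ of $A$, and every strand oriented top-to-bottom.  Stacking $X_0$ on top identifies the bottom vertex $\pbar k$ of $X_0$ with the top vertex $\p k$ of $a(X)$; the cap of $X_0$ joining $\pbar{2j-1}$ to $\pbar{2j}$ thereby splices strands $2j-1$ and $2j$ of $a(X)$ into a single composite strand that connects $\pbar{2j-1}$ to $\pbar{2j}$ at the bottom of the combined diagram.  Traversing this strand in its standard orientation $\pbar{2j-1} \to \pbar{2j}$ runs first against the orientation of the identity segment (contributing $\bm 1^* = \bm 1$) and then with the orientation of the $a_j$--segment, so the composite label is $a_j$.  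Hence $Z$ is the $(0, 2f)$--$A$--Brauer diagram with standardly oriented edges $(\pbar{2j-1}, \pbar{2j})$ labeled by $a_j$.

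For the second stage, $\pi(X)$ is the unlabeled permutation diagram whose strands connect $\p k$ to $\pbar{\pi(X)(k)}$.  Stacking $Z$ on top of $\pi(X)$ identifies $\pbar k$ of $Z$ with $\p k$ of $\pi(X)$, so each edge $(\pbar{2j-1}, \pbar{2j})$ of $Z$ is routed to $\bigl(\pbar{\pi(X)(2j-1)}, \pbar{\pi(X)(2j)}\bigr) = (\pbar{i_j}, \pbar{j_j})$ in $\pi(X)\,Z$, still labeled by $a_j$ and still standardly oriented (since $i_j < j_j$).  By the definitions of $\pi(X)$ and $a(X)$, the collection of labeled oriented edges produced in this way is exactly the edge set of $X$ with its labels, giving $X = \pi(X)\,a(X)\,X_0$.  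The only step requiring any attention is the label bookkeeping in the composition rule, but this is trivialized here by the fact that the auxiliary strands $2j-1$ of $a(X)$ carry the $*$--invariant element $\bm 1$, eliminating any possible $*$'s from the final label.
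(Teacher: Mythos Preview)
Your proof is correct and follows exactly the approach of the paper, which simply asserts that it is straightforward to check that the factored diagram $\pi(X)\,a(X)\,X_0$ has the same oriented labeled edges as $X$; you have carried out that check explicitly and carefully, including the label bookkeeping that makes the verification genuinely ``straightforward.''
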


\begin{proof}  It is straightforward to check that the oriented $A$--labelled  $(0, 2f)$--Brauer diagram given in factored form on the right hand side has the same oriented labeled edges as does $X$. 
\end{proof}

Set 
$$
y_{(s, \blam)} = (e_1 e_3 \cdots e_{2f-1}) \odot y^{\alpha(\blam)} x_\blam,
$$
and for $(\alpha, X, \mfs, v) \in \mathcal T(s, \blam)$, set
$$
v_{(\alpha, X, \mfs, v)} = \alpha( \pi(X) a(X) \odot d(\mfs) v).
$$
Combining Lemma \ref{lemma:  factorization of V 2f 0}  with the factorization
$\m \blam \mfs v \mft w = d(\mfs) v \, y^{\alpha(\blam)} x_\blam \, w^* d(\mft)^*$, we have the following factorization for the elements  of the cellular basis of $D_n(A)$.    

\begin{corollary}  For any
$(\alpha, X, \mfs, u)$ and $(\beta, Y, \mft, w) \in \mathcal T(s, \blam)$,
\begin{equation} \label{equation: symmetric factorization of basis elements of A Brauer algebra}
\begin{aligned}
&\mm {(s, \blam)} {(\alpha, X, \mfs, u)} {(\beta, Y, \mft, w) } = v_{(\alpha, X, \mfs, u)} \, y_{(s, \blam)}  \, (v_{(\beta, Y, \mft, w)})^*.
\end{aligned}
\end{equation}
\end{corollary}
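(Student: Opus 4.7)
The proof is a direct calculation: both sides of the claimed identity unpack into the same product once one applies the three factorizations already at hand, namely $X=\pi(X)a(X)X_0$ (and the analogous decomposition of $Y$), the identity $X_0 X_0^* = e_1 e_3 \cdots e_{2f-1}$, and $\m\blam\mfs u \mft w = d(\mfs)u\,y^{\alpha(\blam)}x_\blam\, w^* d(\mft)^*$. The essential mechanism is the bifunctoriality of horizontal juxtaposition in the category $\mathcal{B}_A$: for morphisms of compatible sizes, $(P\odot Q)(P'\odot Q') = PP'\odot QQ'$, and $(P\odot Q)^* = P^*\odot Q^*$. Both identities follow directly from the diagrammatic definitions of $\odot$ and of $*$.

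The plan is first to expand the right-hand side
$v_{(\alpha, X, \mfs, u)}\, y_{(s,\blam)}\, (v_{(\beta, Y, \mft, w)})^*$
using the definitions
\begin{align*}
v_{(\alpha,X,\mfs,u)} &= \alpha\bigl(\pi(X)a(X)\,\odot\, d(\mfs)u\bigr),\\
y_{(s,\blam)} &= (e_1 e_3\cdots e_{2f-1})\,\odot\, y^{\alpha(\blam)}x_\blam,\\
v_{(\beta,Y,\mft,w)}^* &= \bigl(a(Y)^*\pi(Y)^*\,\odot\, w^* d(\mft)^*\bigr)\beta^*.
\end{align*}
Applying the bifunctorial product twice collapses the three $\odot$-tensors into a single one, yielding
$$
\alpha\bigl(\pi(X)a(X)\,X_0 X_0^*\,a(Y)^*\pi(Y)^* \;\odot\; d(\mfs)u\,y^{\alpha(\blam)}x_\blam\, w^* d(\mft)^*\bigr)\beta^*.
$$
Next, recognize the left tensorand as $XY^*$: by Lemma \ref{lemma:  factorization of V 2f 0}, $X = \pi(X)a(X)X_0$ and $Y^* = X_0^* a(Y)^*\pi(Y)^*$, so the left tensorand is exactly $XY^*$. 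The right tensorand is $\m\blam\mfs u \mft w$ by its defining factorization. Therefore the whole expression equals $\alpha(XY^*\odot \m\blam \mfs u \mft w)\beta^*$, which is the definition of $\mm{(s,\blam)}{(\alpha,X,\mfs,u)}{(\beta,Y,\mft,w)}$.

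There is no real obstacle; the only care needed is bookkeeping to ensure that each factor lives in the correct Hom-space of $\mathcal{B}_A$ so that the bifunctoriality $(P\odot Q)(P'\odot Q')=PP'\odot QQ'$ applies. Specifically, $\pi(X)a(X)$ and $X_0 X_0^*$ must both be read as endomorphisms of $2f$, while $d(\mfs)u$ and $y^{\alpha(\blam)}x_\blam$ are endomorphisms of $s$; once this is recognized the two successive $\odot$-multiplications go through without issue, and the involution behaves as expected because $*$ is a contravariant monoidal functor on $\mathcal{B}_A$.
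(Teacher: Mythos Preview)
Your proof is correct and is essentially the same argument the paper gives: the paper's proof is the single sentence ``Combining Lemma~\ref{lemma:  factorization of V 2f 0} with the factorization $\m \blam \mfs v \mft w = d(\mfs) v \, y^{\alpha(\blam)} x_\blam \, w^* d(\mft)^*$, we have the following factorization\ldots'', and you have simply written out that combination explicitly, invoking the bifunctoriality of $\odot$ and the behavior of $*$ on $\odot$ to justify the intermediate step.
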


 \begin{lemma}  \label{cellular axiom 3 for A Brauer algebra} \mbox{}
 \begin{enumerate}
 \item  $(y_{(s, \blam)})^* \equiv y_{(s,\blam)} \mod \overline M^{(s, \blam)}$. 
 \item
 $(\mm {(s, \blam)} {(\alpha, X, \mfs, v)} {(\beta, Y, \mft, w)})^* \equiv
 \mm {(s, \blam)} {(\beta, Y, \mft, w)}  {(\alpha, X, \mfs, v)} \mod \overline M^{(s, \blam)}$.
 \end{enumerate}
 \end{lemma}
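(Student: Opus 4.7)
For part (1), the plan is to factor $y_{(s,\blam)}$ as the horizontal juxtaposition of a $*$-invariant piece from the ordinary Brauer algebra and a Murphy-type element of the wreath product. Since the generators $e_{2i-1}$ are $*$-invariant and pairwise commute (they involve disjoint pairs of strands), one has $(e_1 e_3 \cdots e_{2f-1})^* = e_1 e_3 \cdots e_{2f-1}$. Combined with Lemma \ref{lemma involution on general murphy type elements}(1), which gives $(y^{\alpha(\blam)} x_\blam)^* \equiv y^{\alpha(\blam)} x_\blam \mod \overline N^\blam$ in $A \wr \sym_s$, together with the elementary identity $(A \odot B)^* = A^* \odot B^*$, I would obtain
$$
(y_{(s,\blam)})^* - y_{(s,\blam)} \in (e_1 e_3 \cdots e_{2f-1}) \odot \overline N^\blam \subseteq V^{2f}_0 \odot \overline N^\blam,
$$
which Lemma \ref{lemma: subset of M bar s lambda} places in $\overline M^{(s,\blam)}$.

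For part (2), I would start from the symmetric factorization \refc{equation: symmetric factorization of basis elements of A Brauer algebra}, apply $*$, and substitute part (1). Writing $(y_{(s,\blam)})^* = y_{(s,\blam)} + r$ with $r = (e_1 e_3 \cdots e_{2f-1}) \odot r'$ and $r' \in \overline N^\blam$, the principal piece $v_{(\beta,Y,\mft,w)}\, y_{(s,\blam)}\, v_{(\alpha,X,\mfs,v)}^*$ is exactly $\mm{(s,\blam)}{(\beta,Y,\mft,w)}{(\alpha,X,\mfs,v)}$ by that same factorization, and it remains to show the residual term $v_{(\beta,Y,\mft,w)} \, r \, v_{(\alpha,X,\mfs,v)}^*$ lies in $\overline M^{(s,\blam)}$. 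Expanding $v_{(\beta,Y,\mft,w)} = \beta \cdot (\pi(Y)a(Y) \odot d(\mft)w)$ and $v_{(\alpha,X,\mfs,v)}^* = ((\pi(X)a(X))^* \odot (d(\mfs)v)^*)\cdot \alpha^*$, the bifunctorial identity $(A \odot B)(C \odot D) = (AC) \odot (BD)$ collapses the middle into the form $\beta \cdot (Z \odot n) \cdot \alpha^*$, where $Z \in V^{2f}_0$ (since rank cannot escape past a rank-$0$ central factor) and $n \in \overline N^\blam$ (using that $\overline N^\blam$ is a two-sided ideal of $A \wr \sym_s$ by Proposition \ref{proposition on multiplication of murphy type elements by A}).

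To finish, I would expand $Z \odot n$ along the basis of $V^{2f}_0 \odot \overline N^\blam$ supplied by $\mathbb A_f$ on one side and the cellular basis of $\overline N^\blam$ on the other. Each contributing term $X'(Y')^* \odot \m \bmu {\mfs'} {v'} {\mft'} {w'}$, with $\bmu \StrictlyGammaDominates \blam$, becomes under conjugation by $\beta$ and $\alpha^*$ precisely the basis element $\mm{(s,\bmu)}{(\beta,X',\mfs',v')}{(\alpha,Y',\mft',w')}$, which lies in $\overline M^{(s,\blam)}$ by definition since $(s,\bmu) \StrictlyGammaDominates (s,\blam)$. The main obstacle is purely bookkeeping: one must consistently track the horizontal product $\odot$ through both the involution and composition in $D_n(A)$, so that the computation collapses to the wreath product $A \wr \sym_s$ where the already-established congruence from Lemma \ref{lemma involution on general murphy type elements}(1) takes over.
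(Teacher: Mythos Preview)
Your proof is correct and follows essentially the same approach as the paper. The paper's own proof is extremely terse (``Part (1) follows from Lemma \ref{lemma involution on general murphy type elements} and Lemma \ref{lemma: subset of M bar s lambda}; Part (2) follows from part (1) and Equation \eqref{equation: symmetric factorization of basis elements of A Brauer algebra}''), and you have correctly filled in the details it leaves implicit---in particular, the point that the residual term in part (2) must be handled via the specific inclusion $r \in (e_1 e_3 \cdots e_{2f-1}) \odot \overline N^\blam$ and the ideal property of $\overline N^\blam$ in $A \wr \sym_s$, rather than by any ideal property of $\overline M^{(s,\blam)}$, which is not yet available.
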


 \begin{proof} Part (1) follows from Lemma \ref{lemma involution on general murphy type elements}  and Lemma \ref{lemma: subset of M bar s lambda}.  
 Part (2) follows from part (1) and Equation \eqref{equation: symmetric factorization of basis elements of A Brauer algebra}.
  \end{proof}

 In order to show that $\mathcal D$ is a  cellular basis of $D_n(A)$,  we examine the action of
 $D_n(A)$ on elements of the form 
 $$v_{(\alpha, X, \mfs, u)} \, y_{(s, \blam)} = \alpha(X X_0^* \odot \m \blam \mfs u {\mft^\blam} 1).
 $$

 \begin{lemma}  \label{lemma:  A Brauer cellularity 1}
  Let $S \odot T \in D_{2f}(A) \odot D_s(A)$.  For $X \in \mathbb A_f$ and  $(\mfs, v) \in \mathcal T(\blam)$,
 $$
 (S \odot T) v_{(1, X, \mfs, u)}  y_{(s, \blam)}
 $$
 is congruent modulo $\overline M^{(s, \blam)}$ to a linear combination of elements
 $$
v_{(1, X', \mfs', u')}  y_{(s, \blam)}
 $$
 with $X' \in \mathbb A_f$ and $(\mfs', u') \in \mathcal T(\blam)$.  
 \end{lemma}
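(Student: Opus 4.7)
The plan is to leverage the factorization $v_{(1, X, \mfs, u)} y_{(s, \blam)} = (X X_0^*) \odot \m \blam \mfs u {\mft^\blam} 1$, which follows from Lemma \ref{lemma:  factorization of V 2f 0} combined with the definitions of $v_{(1, X, \mfs, u)}$ and $y_{(s, \blam)}$, together with the tensor-multiplicativity $(S \odot T)(C \odot D) = (SC) \odot (TD)$ of horizontal juxtaposition. This reduces the problem to analyzing left multiplication in the two tensor slots $D_{2f}(A)$ and $D_s(A)$ separately.

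For the $D_s(A)$ slot, I will use the rank decomposition $D_s(A) = V^s_s \oplus W^s_{s-1}$ from Lemma \ref{direct sum by rank} to write $T = T_s + T_{<s}$ with $T_s \in V^s_s \cong A \wr \sym_s$ (via Lemma \ref{identification of V n n with wreath product algebra}) and $T_{<s} \in W^s_{s-1}$. Since rank is non-increasing under multiplication, the product $T_{<s}\, \m \blam \mfs u {\mft^\blam} 1$ lies in $W^s_{s-1}$; and because rank is additive under $\odot$, when this is $\odot$-juxtaposed with the rank-zero element $S(X X_0^*) \in V^{2f}_0$ the result has rank at most $s-2$ in $D_n(A)$ and therefore lies in $W^n_{s-1} \subseteq \overline M^{(s, \blam)}$, using that $\mathcal D$ is an $R$-basis of $D_n(A)$ (Lemma \ref{lemma:  basis of A Brauer 2}) so that every basis element of rank less than $s$ carries a label $(s', \bmu)$ with $s' < s$. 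For $T_s$, Proposition \ref{proposition: multiplicative property of general murphy basis elements} supplies coefficients $r_{(\mfs', u')} \in R$ with
$$T_s \,\m \blam \mfs u {\mft^\blam} 1 \equiv \sum_{(\mfs', u')} r_{(\mfs', u')} \, \m \blam {\mfs'} {u'} {\mft^\blam} 1 \mod \overline N^\blam.$$

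For the $D_{2f}(A)$ slot, the crucial geometric claim is that $S(X X_0^*)$ lies in the $R$-span of $\{X' X_0^* : X' \in \mathbb A_f\}$. The point is that the $f$ top edges of $X_0^*$ form the standard horizontal pattern $(\p 1, \p 2), (\p 3, \p 4), \dots$ carrying identity labels, and these edges sit entirely inside the top copy of $X X_0^*$ in the composition $S(X X_0^*)$ (which stacks $X X_0^*$ above $S$), so they never meet $S$ and never participate in any strand fusion. Consequently $S(X X_0^*)$ again has its $f$ top edges in the standard pattern with identity labels, and since $\mathbb A_f$ is an $R$-basis of $D_{0, 2f}(A)$ its bottom labels expand uniquely over the cellular basis of $A$ to yield $S(X X_0^*) = \sum_{X'} c_{X'} \, X' X_0^*$ with $c_{X'} \in R$. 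This diagrammatic observation is the main obstacle; everything else is formal manipulation using previously established rank and ideal properties.

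Combining the two slot analyses and invoking Lemma \ref{lemma: subset of M bar s lambda} to absorb the $V^{2f}_0 \odot \overline N^\blam$ contribution into $\overline M^{(s, \blam)}$, we conclude that
$$(S \odot T) \, v_{(1, X, \mfs, u)} \, y_{(s, \blam)} \equiv \sum_{X', \mfs', u'} c_{X'} \, r_{(\mfs', u')} \, v_{(1, X', \mfs', u')} \, y_{(s, \blam)} \mod \overline M^{(s, \blam)},$$
which is the required congruence.
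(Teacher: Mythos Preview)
Your proof is correct and follows essentially the same route as the paper's: factor $(S \odot T)\,v_{(1,X,\mfs,u)}\,y_{(s,\blam)} = S(XX_0^*) \odot T\,\m\blam\mfs u{\mft^\blam}1$, dispose of the low-rank part of $T$ via $W^n_{s-1} \subseteq \overline M^{(s,\blam)}$, and handle the rank-$s$ part with Proposition~\ref{proposition: multiplicative property of general murphy basis elements} and Lemma~\ref{lemma: subset of M bar s lambda}. One small simplification from the paper for the $D_{2f}(A)$ slot: rather than the diagrammatic argument about the top strands of $X_0^*$ surviving, use associativity to write $S(XX_0^*) = (SX)X_0^*$ and observe directly that $SX \in D_{0,2f}(A)$, so it expands over the basis $\mathbb A_f$.
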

 
 \begin{proof}
 Write
 $$
 Z =  (S \odot T) v_{(1, X, \mfs, u)}  y_{(s, \blam)} = 
 SX X_0^* \odot  T  \m \blam \mfs u {\mft^\blam} 1.
 $$
 If $\rank(T) < s$, then $\rank(Z) < s$, so $Z \in \bigoplus_{r < s} V^n_r$.    It follows that
 $Z$ is in the span of $M^{(r, \mu)}$ with $r < s$, and in particular $Z \in \overline M^{(s, \blam)}$. 
 Thus, we can suppose that $\rank(T) = s$,  i.e. $T \in V^s_s \cong A \wr \sym_s$.  By Proposition \ref{proposition: multiplicative property of general murphy basis elements},  $T  \m \blam \mfs u {\mft^\blam} 1$ is congruent modulo $\overline N^\blam$ to a linear combination of elements $\m \blam {\mfs'} {u'}  {\mft^\blam} 1$.    Moreover, $SX \in D_{0, 2f}(A)$,  so is a linear combination of basis elements
 $X' \in \mathbb A_f$.   Therefore $Z$ is  congruent to a linear combination of elements
  $$
 X'X_0^* \odot \m \blam {\mfs'} {u'} {\mft^\blam} 1 = v_{(1, X', \mfs', u')}  y_{(s, \blam)},
 $$
 modulo $V^{2f}_0 \odot \overline N^\blam \subseteq \overline M^{(s, \blam)}$.
  \end{proof}
 
 \begin{lemma} \label{cellular axiom 2 for A Brauer algebra}
  Let $T \in D_n(A)$  and $(\alpha, X, \mfs, u) \in \mathcal T(s, \blam)$.
   Then
 $
 T v_{(\alpha, X, \mfs, u)} y_{(s, \blam)}
 $
 is congruent modulo $\overline M^{(s, \blam)}$ to a linear combination of 
 elements $v_{(\alpha', X', \mfs', u')} y_{(s, \blam)}$.  \end{lemma}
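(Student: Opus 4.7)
The plan is to combine a rank analysis of the underlying Brauer diagrams with the cellular structure of $A\wr\sym_s$ established earlier in the paper. First I would absorb the shuffle: since $v_{(\alpha, X, \mfs, u)} = \alpha\, v_{(1, X, \mfs, u)}$, we have $T\, v_{(\alpha, X, \mfs, u)} y_{(s, \blam)} = (T\alpha)\, v_{(1, X, \mfs, u)} y_{(s, \blam)}$, so after replacing $T$ by $T\alpha$ one may assume $\alpha = 1$.  Using $\m\blam\mfs u{\mft^\blam}1 = d(\mfs) u\,y^{\alpha(\blam)} x_\blam$, I rewrite
$$
T\, v_{(1, X, \mfs, u)} y_{(s, \blam)} = E\cdot (1\odot x_\blam),\qquad E := T\bigl(X X_0^* \odot d(\mfs) u\,y^{\alpha(\blam)}\bigr).
$$

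Next I would analyze $E$ by rank.  Since $E$ equals $T$ times a rank-$s$ element, $E$ lies in $\bigoplus_{r\le s} V_r^n$.  The components of rank $r<s$ lie in $\bigoplus_{r<s} V_r^n \subseteq \overline M^{(s,\blam)}$, and right multiplication by $(1\odot x_\blam)$ cannot raise the rank, so these contribute only to $\overline M^{(s,\blam)}$.  For the rank-$s$ component $E^{(s)}$, every contributing basis diagram inherits its bottom connectivity from $X X_0^* \odot d(\mfs) u\,y^{\alpha(\blam)}$, namely horizontal pairs at $(\pbar{2i-1},\pbar{2i})$ for $1\le i\le f$ and vertical strands elsewhere, because the absence of any \emph{additional} bottom-horizontal pair is exactly what forces the rank to be $s$.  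Hence the shuffle $\beta'$ in Lemma \ref{lemma factorization of Brauer diagrams} is the identity for each such diagram, yielding
$$
E^{(s)} = \sum_k \alpha'_k\bigl(E_{0,k}\odot E_{1,k}\bigr),
$$
where each $\alpha'_k$ is a $(2f,s)$-shuffle, each $E_{0,k}\in V^{2f}_0$ has bottom $X_0^*$ (hence $E_{0,k} = X'_k X_0^*$ for some $X'_k$), and each $E_{1,k}\in V^s_s\cong A\wr\sym_s$.  Moreover, $y^{\alpha(\blam)}$ appeared as bottom-end labels on the last $s$ strands and those labels survive the rank-$s$ factorization intact, so $E_{1,k} = W_k\,y^{\alpha(\blam)}$ for some $W_k\in A\wr\sym_s$.

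Finally I invoke the wreath product cellularity.  Right multiplication by $(1\odot x_\blam)$ gives $E_{1,k} x_\blam = W_k\,y^{\alpha(\blam)} x_\blam = W_k\, y_\blam$.  By Corollary \ref{corollary: y alpha x lambda in span of v y alpha x lambda}, $y_\blam$ is congruent modulo $\overline N^\blam$ to a linear combination of elements $v\,y_\blam = \m\blam{\mft^\blam}{v}{\mft^\blam}{1}$, and applying Proposition \ref{proposition: multiplicative property of general murphy basis elements} with $x = W_k$ together with Proposition \ref{proposition on multiplication of murphy type elements by A} (asserting that $\overline N^\blam$ is an ideal of $A\wr\sym_s$) yields
$W_k\, y_\blam \equiv \sum_{(\mfu, v')} r^{(k)}_{(\mfu, v')}\, \m\blam\mfu{v'}{\mft^\blam}1 \mod \overline N^\blam$.
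The error terms in $\alpha'_k(V^{2f}_0\odot \overline N^\blam)$ lie in $\overline M^{(s,\blam)}$ by Lemma \ref{lemma: subset of M bar s lambda}, after verifying directly from the cellular basis of $D_n(A)$ that left multiplication by a shuffle preserves $\overline M^{(s,\blam)}$.  Expanding each $X'_k$ in the basis $\mathbb A_f$ of $D_{0,2f}(A)$ presents the final result as a linear combination of elements $v_{(\alpha'_k, X_{k,\ell}, \mfu, v')} y_{(s, \blam)}$ modulo $\overline M^{(s,\blam)}$, as required.

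The main obstacle is to verify rigorously the label-propagation claim in the second paragraph: that the rank-$s$ factorization of $E$ preserves the factor $y^{\alpha(\blam)}$ on the right of each $E_{1,k}$.  This is a diagrammatic bookkeeping statement that requires tracking how the bottom labels $y^{\alpha(\blam)}$ (entering as a right factor in the simple-tensor component under the identification $D_n(A) = A^{\otimes n}\otimes D_n$) travel through left multiplication by $T$ and through the shuffle factorization of Lemma \ref{lemma factorization of Brauer diagrams}.  Once this technical point is settled, the remainder is a routine reduction to the already-established cellular structure of $A\wr\sym_s$.
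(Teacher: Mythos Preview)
Your approach is genuinely different from the paper's and, once a systematic orientation error is fixed, it works.  The paper proceeds generator by generator: it is enough to check the statement for $T$ equal to $a^{(1)}$, $s_i$, or $e_i$ (Lemma \ref{lemma: generators of D n of A}).  For $a^{(1)}$, $s_i$, and the ``good'' case of $e_i$ one has $T\alpha = \alpha'(S\odot T')$ with $S\odot T'\in D_{2f}(A)\odot D_s(A)$, and then Lemma \ref{lemma:  A Brauer cellularity 1} applies directly.  Only the ``bad'' case of $e_i$ (where $\alpha^{-1}(i)$ and $\alpha^{-1}(i+1)$ lie on different sides of $2f$) needs a separate diagrammatic factorisation of $e_i\alpha(X\odot 1)$ followed by Proposition \ref{proposition: multiplicative property of general murphy basis elements}.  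This localises all the Brauer-diagram bookkeeping to one elementary tangle.

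By contrast you keep $T$ arbitrary and argue globally: pull out $(1\odot x_\blam)$ on the right, use the rank filtration to discard $W^n_{s-2}\subset\overline M^{(s,\blam)}$, and factor every rank-$s$ summand via Lemma \ref{lemma factorization of Brauer diagrams}.  This is valid, but note that in the paper's stacking convention ($xy$ has $y$ on \emph{top}) left multiplication by $T$ puts $T$ on the \emph{bottom}.  Hence it is the \emph{top} connectivity of $E$ that is inherited from $XX_0^*\odot d(\mfs)u\,y^{\alpha(\blam)}$, giving $\beta'=1$ in Lemma \ref{lemma factorization of Brauer diagrams} (and $E_{0,k}$ has top, not bottom, equal to $X_0^*$).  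Your conclusion $\beta'=1$ is correct; only the word ``bottom'' is wrong throughout.  As for the label-propagation obstacle you flag: it dissolves if you first factor $E = E'\cdot(1\odot y^{\alpha(\blam)})$ with $E':=T(XX_0^*\odot d(\mfs)u)$, since right multiplication by a labelled identity commutes with the rank decomposition and with the $\alpha'_k(\cdots\odot\cdots)$ factorisation.  Finally, $\alpha'_k(V^{2f}_0\odot\overline N^\blam)\subset\overline M^{(s,\blam)}$ needs no separate ``shuffle preserves $\overline M$'' verification: after expanding in $\mathbb A_f$, each $\alpha'_k(X'Y^*\odot\m\bmu{\mfs'}{u'}{\mft'}{w'})$ is literally a basis element $\mm{(s,\bmu)}{(\alpha'_k,X',\mfs',u')}{(1,Y,\mft',w')}\in\overline M^{(s,\blam)}$.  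The paper's route trades this global bookkeeping for four short case checks.
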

 
 \begin{proof}
 It suffices to prove this when $T$ is one of the generators $a\power 1$,  $g_i$ or $e_i$ of $D_n(A)$, see Lemma \ref{lemma: generators of D n of A}. 
 
 {\em Case 1,}  $T = a \power 1$.    Write $j = \alpha\inv(1)$.  We have
 $$
 \begin{aligned}
 a \power 1 \, v_{(\alpha, X, \mfs, u)} y_{(s, \blam)}  &= 
 a \power 1 \alpha (X X_0^* \odot \m \blam \mfs u {\mft^\blam}  1)  \\
 &=  \alpha a \power {j} (XX_0^* \odot \m \blam \mfs u {\mft^\blam}  1)  .\\
 \end{aligned}
 $$
 Since $a \power j \in D_2f(A) \odot D_s(A)$,  the conclusion follows from Lemma \ref{lemma:  A Brauer cellularity 1}.
 
 {\em Case 2,}  $T = g_i$.  More generally, if $T = \pi \in \symn$,  then $\pi \alpha = \alpha' (\pi_1 \odot \pi_2)$,  where $\alpha'$ is a $(2f,s)$--shuffle and $\pi_1 \odot \pi_2 \in \sym_{2f}\times \sym_s$.
 Thus
 $$
 \begin{aligned}
\pi  \, v_{(\alpha, X, \mfs, u)} y_{(s, \blam)}  &= 
\pi \alpha (X X_0^* \odot \m \blam \mfs u {\mft^\blam}  1)  \\
 &=  \alpha'  (\pi_1 \odot \pi_2)(X X_0^* \odot \m \blam \mfs u {\mft^\blam}  1) ,\\
 \end{aligned}
 $$
 and again the conclusion follows from Lemma \ref{lemma:  A Brauer cellularity 1}.
 
 Next, we consider $T = e_i$.  There are two cases, depending on the values of
 $\alpha\inv(i)$  and $\alpha\inv(i+1)$.
 
 {\em Case 3,}  $T = e_i$, and both $\alpha\inv(i)$  and $\alpha\inv(i+1)$ are $\le 2f$,  or both are
 $\ge 2f + 1$.   Because $\alpha$ is a $(2f,s)$--shuffle, we have $\alpha\inv(i+1) = \alpha\inv(i) + 1$. 
 Therefore, $e_i \alpha = \alpha e_j$,  where $j = \alpha\inv(i)$.   Note that $e_j \in D_2f(A) \odot D_s(A)$.   The conclusion follows by applying Lemma \ref{lemma:  A Brauer cellularity 1}, as in the previous cases.
 
 {\em Case 4,}  $T = e_i$, and one of $\alpha\inv(i)$  and $\alpha\inv(i+1)$ is $\le 2f$ while the other is $\ge 2f + 1$.   We have
 $$
 \begin{aligned}
e_i \, v_{(\alpha, X, \mfs, u)} y_{(s, \blam)}   &= 
e_i \alpha (XX_0^* \odot \m \blam \mfs u {\mft^\blam}  1) \\
 &=  e_i \alpha (X \odot 1)(X_0^* \odot \m \blam \mfs u {\mft^\blam}  1).
 \end{aligned}
 $$
 It's easy to see that the $A$--Brauer diagram $ e_i \alpha (X \odot 1) \in D_{s, n}(A)$ has rank $s$, so using the procedure of Lemma \ref{lemma factorization of Brauer diagrams}, it can be written as
 $\alpha' (X_1 \odot X_2)$,  where  $\alpha'$  is a $(2f, s)$--shuffle,  $X_1 \in D_{0, 2f}(A)$ and $X_2 \in V^s_s$, and thus as a linear combination of terms   $\alpha' (X' \odot X_2)$,  where $X' \in \mathbb A_f$.
 Thus  $e_i \, v_{(\alpha, X, \mfs, u)} y_{(s, \blam)}$  is a linear combination of terms
 $$
 \begin{aligned}
\alpha'  (X' \odot X_2)(X_0^* \odot \m \blam \mfs v  {\mft^\blam}  1) 
&= \alpha' (X' X_0^* \odot X_2 \m \blam \mfs v  {\mft^\blam}  1)  \\
 \end{aligned}
 $$
 Now the result follows by using Proposition \ref{proposition: multiplicative property of general murphy basis elements}, and Lemma \ref{lemma: subset of M bar s lambda}.
  \end{proof}
  
\begin{corollary} \label{corollary:  cellular property 2 for basis D of A Brauer algebra}
Let $T \in D_n(A)$  and $\mm {(s, \blam)}  {(\alpha, X, \mfs, u)} {(\beta, Y, \mft, w)} \in \mathcal D$.  Then $T \mm {(s, \blam)}  {(\alpha, X, \mfs, u)} {(\beta, Y, \mft, w)}$ is congruent  modulo
$\overline M^{(s, \blam)}$
to a linear combination of basis elements $\mm {(s, \blam)}  {(\alpha', X', \mfs', u')} {(\beta, Y, \mft, w)}$, with the coefficients of the linear combination independent of $(\beta, Y, \mft, w)$.  
\end{corollary}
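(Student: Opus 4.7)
The plan is to combine Lemma \ref{cellular axiom 2 for A Brauer algebra} with the symmetric factorization \eqref{equation: symmetric factorization of basis elements of A Brauer algebra}. Writing $\mm{(s,\blam)}{(\alpha,X,\mfs,u)}{(\beta,Y,\mft,w)} = v_{(\alpha,X,\mfs,u)}\,y_{(s,\blam)}\,(v_{(\beta,Y,\mft,w)})^*$, Lemma \ref{cellular axiom 2 for A Brauer algebra} yields
\begin{equation*}
T\,v_{(\alpha,X,\mfs,u)}\,y_{(s,\blam)} \equiv \sum_{(\alpha',X',\mfs',u')} r_{(\alpha',X',\mfs',u')}\, v_{(\alpha',X',\mfs',u')}\,y_{(s,\blam)} \pmod{\overline M^{(s,\blam)}},
\end{equation*}
with the coefficients $r_{(\alpha',X',\mfs',u')} \in R$ depending only on $T$ and $(\alpha,X,\mfs,u)$. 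Right-multiplying this congruence by $(v_{(\beta,Y,\mft,w)})^*$ produces the desired linear combination of basis elements $\mm{(s,\blam)}{(\alpha',X',\mfs',u')}{(\beta,Y,\mft,w)}$ plus an error term lying in $\overline M^{(s,\blam)}\,(v_{(\beta,Y,\mft,w)})^*$. Thus the corollary reduces to the assertion that $\overline M^{(s,\blam)}$ is a two-sided ideal of $D_n(A)$.

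To establish the ideal property, I would induct on $(\Lambda,\GammaDominates)$, asserting simultaneously that each $\overline M^{(s,\blam)}$ is $*$-invariant and a two-sided ideal of $D_n(A)$. The base case, $(s,\blam)$ maximal, is trivial since $\overline M^{(s,\blam)} = 0$. For the inductive step, fix $(s,\blam)$ and assume the assertion for all strictly larger indices $(s',\bmu) \StrictlyGammaDominates (s,\blam)$. The $*$-invariance is immediate from Lemma \ref{cellular axiom 3 for A Brauer algebra}: the involution sends any basis generator $\mm{(s',\bmu)}{(\alpha',X',\mfs',u')}{(\beta',Y',\mft',w')}$ of $\overline M^{(s,\blam)}$ to $\mm{(s',\bmu)}{(\beta',Y',\mft',w')}{(\alpha',X',\mfs',u')}$ modulo $\overline M^{(s',\bmu)} \subseteq \overline M^{(s,\blam)}$, and the main term again lies in $M^{(s',\bmu)} \subseteq \overline M^{(s,\blam)}$.

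The left ideal property is obtained by repeating the opening argument at the strictly larger level $(s',\bmu)$: Lemma \ref{cellular axiom 2 for A Brauer algebra} expands $T\,v_{(\alpha',X',\mfs',u')}\,y_{(s',\bmu)}$ modulo $\overline M^{(s',\bmu)}$ as a linear combination of elements $v_{(\alpha'',X'',\mfs'',u'')}\,y_{(s',\bmu)}$. Right-multiplying by $(v_{(\beta',Y',\mft',w')})^*$ sends the main sum into $M^{(s',\bmu)} \subseteq \overline M^{(s,\blam)}$, and the error term into $\overline M^{(s',\bmu)}\,(v_{(\beta',Y',\mft',w')})^*$, which by the inductive right-ideal hypothesis is contained in $\overline M^{(s',\bmu)} \subseteq \overline M^{(s,\blam)}$. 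Combining $*$-invariance with the left ideal property then yields the right ideal property via $zT = (T^*z^*)^*$ for $z \in \overline M^{(s,\blam)}$ and $T \in D_n(A)$, completing the induction.

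The main obstacle is the mutual dependence between the left and right ideal properties of $\overline M^{(s,\blam)}$. Lemma \ref{cellular axiom 2 for A Brauer algebra} naturally expands only $T\,v_{(\alpha,X,\mfs,u)}\,y_{(s,\blam)}$ on the left; passing to a statement about a full basis element, which carries an extra right factor $(v_{(\beta,Y,\mft,w)})^*$, already requires $\overline M^{(s,\blam)}$ to absorb right multiplication. Because the right-ideal property is in turn derived via the involution from the left-ideal property, whose error terms live at strictly greater indices, the two properties must be bootstrapped together by the induction above.
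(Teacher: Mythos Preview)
Your argument is correct and follows precisely the paper's approach: factor the basis element via \eqref{equation: symmetric factorization of basis elements of A Brauer algebra}, apply Lemma \ref{cellular axiom 2 for A Brauer algebra}, and multiply on the right by $(v_{(\beta,Y,\mft,w)})^*$. The paper's proof consists of exactly these two sentences and leaves the absorption of the error term $\overline M^{(s,\blam)}\,(v_{(\beta,Y,\mft,w)})^*$ into $\overline M^{(s,\blam)}$ implicit.

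Where your write-up adds value is in making explicit, via the simultaneous induction on $(\Lambda,\GammaDominates)$, that each $\overline M^{(s,\blam)}$ is a $*$-invariant two-sided ideal. This is the standard way to close the apparent circularity you identified (the right-ideal property is needed to push the corollary through, but the corollary is what one would normally use to establish the ideal property). The paper, by contrast, presents the corollary as an immediate consequence and only afterwards, in the proof of Theorem \ref{theorem cellularity of A Brauer algebras}, invokes the resulting cellular structure; the ideal property is thus taken for granted rather than spelled out. Your inductive justification is the natural completion of the paper's terse argument, and mirrors exactly what was done for the wreath product case in Proposition \ref{proposition on multiplication of murphy type elements by A} and Proposition \ref{proposition: multiplicative property of general murphy basis elements}.
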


\begin{proof}  We have that
$
 T v_{(\alpha, X, \mfs, u)} y_{(s, \blam)}
 $
 is congruent modulo $\overline M^{(s, \blam)}$ to a linear combination of 
 elements $v_{(\alpha', X', \mfs', u')} y_{(s, \blam)}$.  Multiplying this congruence on the right by
 $(v_{(\beta, Y, \mft, w)})^*$ yields the result.
\end{proof}

\noindent
{\em Proof of Theorem \ref{theorem cellularity of A Brauer algebras}.}    By Lemma \ref{lemma:  basis of A Brauer 2}, $\mathcal D$ is an $R$--basis of $D_n(A)$.  By Lemma \ref{cellular axiom 3 for A Brauer algebra} and Corollary \ref{corollary:  cellular property 2 for basis D of A Brauer algebra},  $\mathcal D$ is a cellular basis.  Fix $(s, \blam) \in \Lambda$.  
Comparing Lemma \ref{cellular axiom 2 for A Brauer algebra} and Corollary \ref{corollary:  cellular property 2 for basis D of A Brauer algebra}, we see that
$$
M := \spn\{v_{(\alpha, X, \mfs, u)} y_{(s, \blam)} + \overline M^{(s, \blam)} :  (\alpha, X, \mfs, u) \in \mathcal T((s, \blam))  \}
$$
is an $D_n(A)$ module isomorphic to the cell module $\Delta^{(s, \blam)}$.    It follows from Corollary \ref{corollary: y alpha x lambda in span of v y alpha x lambda}
 and Lemma \ref{lemma: subset of M bar s lambda} that $y_{(s, \blam)} \in M$, and hence $\Delta^{(s, \blam)}$ is cyclic. \qed
 

\def\MR#1{\href{http://www.ams.org/mathscinet-getitem?mr=#1}{MR#1}}

  \end{document}